\newtheorem{thm}{Theorem}[section]
\newtheorem{lem}[thm]{Lemma}\newtheorem{lemma}[thm]{Lemma}
\newtheorem{cor}[thm]{Corollary}
\newtheorem{pro}[thm]{Proposition}
\newtheorem{observation}[thm]{Observation}
\newtheorem{example}[thm]{Example}
\newtheorem{remark}[thm]{Remark}
\newtheorem{notation}[thm]{Notation}
\newtheorem{definition}[thm]{Definition}
\newtheorem*{thm*}{Theorem}
\newtheorem*{cor*}{Corollary}
\newcommand{\Cat}{\mathcal C}  
\newcommand{\Mfd}{\mathsf{Mfd}}
\newcommand{\NMfd}{\mathsf{NMfd}}
\newcommand{\String}{\mathsf{String}}
\newcommand{\bt}{\mathsf{t}}
\newcommand{\GpdBibd}{{\bf GpdBibd}}
\newcommand{\R}{\mathbb R}
\newcommand{\Z}{\mathbb Z}
\newcommand{\Real}{\mathbb R}
\newcommand{\huaB}{\mathcal{B}}
\newcommand{\huaF}{\mathcal{F}}
\newcommand{\huaG}{\mathcal{G}}
\newcommand{\huaC}{{\mathcal{C}}}
\newcommand{\huaT}{\mathcal{T}}
\newcommand{\huaM}{\mathcal{M}}
\newcommand{\huaN}{\mathcal{N}}
\newcommand{\g}{\mathfrak g} 
\newcommand{\pt}{\mathbb{\star}}
\newcommand{\Hom}{\mathrm{Hom}}
\newcommand{\cohom}{\mathrm{cohom}}
\newcommand{\alg}{\mathrm{alg}}
\newcommand{\Vectbd}{\mathsf{Vectbd}}
\newcommand{\Aut}{\mathrm{Aut}}
\newcommand{\Diff}{\mathsf{Diff}}
\newcommand{\id}{\mathrm{id}}
\newcommand{\Set}{\mathsf{Set}}
\newcommand*{\Horn}[2]{{\wedge^{#1}_{\!\!\!\!#2}}}
\newcommand{\simp}[1]{{#1}_\bullet}
\newcommand{\Gpd}{{\bf Gpd}}
\newcommand{\bs}{\mathsf{s}}
\newcommand{\pSh}[1]{\mathsf{pSh}({#1})}
\title{Differentiating $L_\infty$ groupoids}
\author[D. Li]{Du Li}
\address{Du Li, Georg-August-Universität Göttingen,
Institut für Mathematik, Bunsenstr. 3-5, 37073 Göttingen}
\email{liduzg@gmail.com}
\author[L.Ryvkin]{Leonid~Ryvkin}
\address{Leonid Ryvkin, Georg-August-Universität Göttingen,
Institut für Mathematik, Bunsenstr. 3-5, 37073 Göttingen // Institut Camille Jordan, Université Claude Bernard Lyon 1, 
43 boulevard du 11 novembre 1918, 69622 Villeurbanne France}
\email{leonid.ryvkin@math.univ-lyon1.fr}
\author[A. Wessel]{Arne Wessel}
\address{Arne Wessel, Fraunhofer-Institut für Energiewirtschaft und Energiesystemtechnik IEE,
Joseph-Beuys-Straße 8, Kassel}
\email{arne.wessel@iee.fraunhofer.de}
\author[C. Zhu]{Chenchang Zhu}
\address{Chenchang Zhu, Georg-August-Universität Göttingen,
Institut für Mathematik, Bunsenstr. 3-5, 37073 Göttingen}
\email{chenchang.zhu@mathematik.uni-goettingen.de}
\date{\today}
\begin{document}

\begin{abstract}
La différenciation d'un n-groupoïde de Lie via l'analgoue différentio-géométrique des points proches a priori ne donne qu'un pré-faisceau de variétés graduées. Dans cet article, nous démontrons que ce pré-faisceau est représentable par le complexe tangent du n-groupoïde de Lie. En conséquence immédiate, nous obtenons que le complexe tangent d'un tel n-groupoïde porte la structure d'une n-algèbre de Lie. \\
\\
Differentiating a Lie $n$-groupoid via the differential-geometric fat point a priori only yields a presheaf of graded manifolds. In this article, we prove that this presheaf is representable by the tangent complex of the Lie $n$-groupoid. As an immediate consequence, we obtain that the tangent complex carries the structure of a Lie $n$-algebroid.
\end{abstract}


\keywords{Lie $n$-groupoids, $L_\infty$-groupoids, $L_\infty$-algebroids, Lie $n$-algebroids,  simplicial manifolds, graded manifolds, differentiation, d.g. manifolds}

\subjclass[2020]{Primary 58H05, 58A50; Secondary 55U10,18N65}

\maketitle

\tableofcontents

\section{Introduction}

Lie n-groupoids ($n=1, 2, \dots, \infty$) offer a clear and concrete way to represent geometric  $n$-stacks in differential geometry.  When $n=1$, they reduce to the well-studied Lie groupoids, which play a fundamental role  in differential geometry, topology, and non-commutative geometry. The infinitesimal counterpart of a Lie $n$-groupoid is a Lie $n$-algebroid, which can be understood as a kind of differential graded (d.g.) manifold concentrated in positive degrees, referred to  as  an NQ manifold. D.g. manifolds are extensively used in mathematical physics, particularly in BV theory, BRST theory, AKSZ theory, and topological field theory (TFT).

Therefore, how Lie $n$-groupoids and Lie $n$-algebroids {\bf correspond} to each other, becomes an important and interesting problem. Inspired by the relation between Lie groups and Lie algebras, which are Lie 1-groupoids over a point and Lie 1-algebroids over a point respectively,  we expect the following relation
\[ \xymatrix{
& \fbox{\parbox{.3\linewidth}{\center{Lie groups}}}
 \ar@{^{(}->}[d] \ar[rrrr]^{\text{differentiation}} &  & & &
  \fbox{\parbox{.3\linewidth}{\center{Lie algebras}}}  \ar@{^{(}->}[d]
  \ar[llll]^{\text{integration}}   \\
 & \fbox{\parbox{.3\linewidth}{\center{Lie $n$-groupoids}}} \ar[rrrr]^{\text{differentiation}} &  & & &
   \fbox{\parbox{.3\linewidth}{\center{Lie $n$-algebroids}}}
  \ar[llll]^{\text{integration}} }
  \]
When all the Lie brackets vanish, and the base manifold is simply a point, a Lie $n$-algebroid is just a chain complex of vector spaces of length $n$. Thus, the Dold-Kan correspondence may be viewed as the abelian case of the integration and differentiation correspondence. 
  \[
  \xymatrix{
 & \fbox{\parbox{.3\linewidth}{\center{Simplicial vector spaces\footnotemark }}} \ar[rrrr]^{\text{Dold-Kan}} &  & & &
   \fbox{\parbox{.3\linewidth}{\center{Chain complexes}}}
  \ar[llll]^{\text{correspondence}}
  }
\]\footnotetext{Notice that a simplicial vector space always satisfies Kan conditions \cite{Moore56}. 
}

In fact, even prior to its significant appearance in homological algebra, the problem of integration and differentiation is deeply rooted in differential geometry. This is evident from Lie's three theorems as well as from Van Est's theory. In recent times, there have been several significant milestones in this area. A few of these include: Integration of Lie algebroids \cite{cf}, classifying the first complete integrability obstruction for Lie algebroids; integration of Poisson manifolds examined from the perspective of sigma models \cite{cafe}, creating a link to mathematical physics; integration achieved via stacky Lie groupoids that surpasses integrability obstruction from a higher point of view \cite{tz}, opening a link to higher structures; and integrability obstruction and surpassing construction via higher structures for infinite-dimensional Lie algebras \cite{neeb:cent-ext-gp, WoZh16}.  As shown in these cases, differentiation  is straightforward when $n=1$, but integration poses a more complex and compelling challenge. This is because, unlike finite-dimensional Lie algebras, many Lie-algebra-like structures cannot be integrated. However, in the higher case when $n\ge 2$,  providing a universal differentiation is itself an unsolved problem. This is exactly the topic of our project. 

Indeed, the higher aspects of integration and differentiation have become a very active area of research in recent years. This line of work began with Getzler’s integration of nilpotent $L_\infty$-algebras \cite{getzler} using a construction parallel to the space realisation in rational homotopy theory. Continuing this line, Henriques \cite{henriques} subsequently studied the integration of general $L_\infty$-algebras via an infinite dimensional construction, while \v{S}evera and Siran \cite{sev:int} focused on the local integration of $L_\infty$-algebroids. Notably, going further from the integration of $L_\infty$-algebras as functor of iCFO (incomplete category of fibrant objects) developped by Rogers--Zhu \cite{Rogers-Zhu:2016}, there is a new recent progress by Rogers and Wolfson \cite{rogers-wolfson:24}, which provides a finite dimensional integration for finite dimensional $L_\infty$-algebras via Postnikov-tower and $k$-invariant technique.

From the differentiation perspective, Nuiten \cite{NuitenPhd}, building on Lurie’s theory of formal moduli problems \cite{LurieDAGX}, formulates differentiation in derived geometry at the level of $\infty$-categories. This is further developed in a recent preprint \cite[Theorem 3.2]{nuiten:26}.  A related approach in the same derived-geometric spirit was developed by Pridham, where ``integration/differentiation'' is implemented via a zig--zag of Quillen functors between model categories, using a variant of the Dold--Kan correspondence for (almost) cosimplicial algebras \cite[Section~3.2]{prid:shift}. In a recent article, Rogers \cite{rogers:2025} adopts a non-derived and finite-dimensional framework, and in return obtains a \emph{strict} 1-functorial differentiation theory for a Lie $\infty$-group $\huaG$. In particular, his construction is equipped with canonical identifications with the classical differentiation of a simplicial Lie group and provides a canonical isomorphism between the differentiation of $\huaG$ and its tangent complex \cite[cf.\ Lem.~3.14]{rogers:2025}, making the differentiation object tractable via $k$-invariants.  
We also note two other current series of works of Dorsch \cite{dorsch:24, Dorsch:thesis} and Cabrera–Del Hoyo \cite{Ca-DelHo:25}, which in particular include a differentiation theory for all simplicial manifolds, thanks to Dorsch's observation \cite{dorsch:24} that simplicial manifolds always satisfy local Kan conditions, as part of a broader Van Est program. Their results go beyond differentiation, providing a Van Est theory that compares global data with the corresponding infinitesimal invariants.
In the strict setting, Angulo and Cueca \cite{angulo-cueca:24} have recently given an explicit Van Est map for strict Lie $2$-groups. From a categorical viewpoint, Aintablian and Blohmann \cite{aint-bloh:25} develop a general differentiation formalism for differentiable groupoid objects and identify their infinitesimal counterparts in terms of associated (abstract) Lie algebroids.

\medskip

More broadly, there is a rich literature on integration and differentiation of Lie-theoretic objects endowed with additional geometric structure, especially in Poisson geometry and its generalizations: Poisson manifolds and symplectic groupoids \cite{Xu95, CrFe04}, Jacobi structures and contact groupoids \cite{cz}, (twisted) Dirac structures and 1-shifted symplectic groupoids \cite{BCWZ}, genus integration of  Contreras-Fernandes \cite{con-fer:18},  as well as Courant algebroids \cite{libland-severa1, MeTa11, MeTa18a, MeTa18b, shengzhu3} and a recent work of \'Alvarez--Gualtieri--Jiang \cite{alvarez-gualtieri-jiang} addressing the integration problem for generalized K\"ahler structures via symplectic double groupoids. As the relevant geometric structures become intrinsically ``higher''---for instance, Courant-type structures can be viewed as higher (shifted-symplectic) Lie-algebroid objects---the corresponding integration and differentiation problems move into the higher domain. In such settings, having differentiation and integration formulas in an explicit and Lie theoretical form is particularly important for concrete computations and applications.

In \cite{severa:diff},  using the pair groupoid of the ``odd line'', \v{S}evera  proposed a differentiation of 
Lie $n$--groupoids to presheaves on differential graded manifolds. This work also outlines an approach to representability of the differentiation, namely how to pass from presheaves on graded manifolds to genuine graded manifolds. This idea was further developed in \cite{li:thesis} and \cite{cech:2016}, effectively reducing the problem to a subtle and challenging combinatorial one. We note that \cite[Section~4.3]{cech:2016} contains a combinatorial gap\footnote{More precisely, around Eqs.~(4.17) and (4.18).}, as communicated to the authors, and that Lemma~8.34 in \cite{li:thesis} exhibits a similar issue.

In this article, we devote the entire Section \ref{sec:key-A} to solving the above combinatorial problem, and thus provide a first complete proof for the conjecture from \cite{severa:diff} that the tangent presheaf of an Lie $n$--groupoid is representable. In addition, unlike previous proof attempts, our proof does not go through local coordinates and rather relies on  connections and jet bundles. We develop a rigorous theory on graded manifolds involving  their limits, splittings, and compatible connections.  This leads to a very natural appearance of the tangent complex as the differentiation, and illuminates the differential geometric nature of the differentiation problem.

As mentioned previously, we aim at providing a universal differentiation to all Lie $n$-groupoids. Using \v{S}evera's pair groupoid of the ``odd line'' as a ``fat point'' $D_\bullet$ in the world of Lie $n$-groupoids, we give an explicit formula of the resulting tangent object. ``Fat point'' is a well-known mathematical slang, coming probably from Weil\footnote{This should have come from Andr\'e Weil's local algebra and his idea of ``points proches'' \cite{weilproche}. At least this is the earliest literature that the authors can trace back to this idea.}, 
 appears mostly in the context of algebraic geometry. For example, $P_\epsilon:=Spec(\R[x]/\langle x^2 \rangle)$ is the fat point in the world of $\R$-varieties: shooting with $P_\epsilon$ we obtain  tangent spaces of a variety (see \cite[Section V]{perrin:ag2008}). 
We prove in Theorem \ref{thm:ker-p} that shooting with the simplicial ``fat point'' $D_\bullet$ to a Lie $n$-groupoid $X_\bullet$,  one obtains exactly the  tangent complex of $X_\bullet$.
That is, 
\begin{thm}(Theorem [\ref{thm:ker-p}] Let $D_\bullet$ be the simplicial nerve of the pair groupoid of $D$ (cf. Examples \ref{ex:fatpoint} and \ref{ex:pair}), and $X_\bullet$ a Lie $n$-groupoid. Then  
\begin{equation} \label{eq:ker-p}
	\Hom(D_\bullet, X_\bullet) \cong \ker Tp^1_0|_{X_0}[1] \oplus \ker Tp^2_0|_{X_0}[2] \oplus \dots \oplus \ker Tp^n_0|_{X_n}[n], 
\end{equation}
where $p^k_0: X_k \to \Horn{n}{0}(X)$ is the horn projection (see Eq.~\eqref{eq:horn-proj}). 
\end{thm}

The tangent complex of a Lie $n$-groupoid plays the role of the tangent object in the world of Lie $n$-groupoids, thus also differentiable $n$-stacks. Its first explicit formula in quotient form appears in \cite{Lesdiablerets, MeTa18b}. Here, what we use in \eqref{eq:ker-p} is an isomorphic model given in \cite{cueca-zhu}. This formula is also observed by Du Li \cite{li:thesis} in a local form and in \cite{cech:2016}.  With this form, it is clear that the tangent complex of a Lie $n$-groupoid is a graded vector bundle of  degree $n$. As defined in Def. \ref{defi:T}, this procedure is functorial in $X_\bullet$. Thus, $D_\bullet$ serves well as a ``fat point'' in the world of Lie $n$-groupoids.
After proving the theorem, we apply our method to various examples, including the Lie 2-groupoids of Courant algebroids \cite{libland-severa1, MeTa11, MeTa18a,shengzhu3}, string Lie 2-groups, a form of higher groups called $n$-tower to explain higher topological orders in condensed matter physics \cite{Lan-Zhu-Wen:2019, Zhu-Lan-Wen:2019},  and simplicial Lie groups \cite{Jurco}.\\

Since the $D$-action on $\Hom(D_\bullet, X_\bullet)$ induces a natural homological vector field (\cite{severa:diff}), an immediate corollary of our result is 
\begin{cor}(Corollary \ref{cor:Q-structure})
	The tangent complex of a Lie $n$-groupoid carries the structure of a Lie $n$-algebroid. 
\end{cor}

As we mentioned, a nice and unexpected development, obtained a year later of our preprint by Dorsch \cite{dorsch:24}, is that our method applies to the differentiation of simplicial manifolds in full generality, thanks to the fact proved there that simplicial manifolds satisfy suitable local Kan conditions\footnote{This is also observed later in \cite{Ca-DelHo:25}.}.  Independently of this, but using overlapping tools and techniques, we plan to compute the higher Lie brackets explicitly. This will require a careful choice of the connections and splittings entering the isomorphism \eqref{eq:ker-p}, as well as a better understanding of the Lie $n$-algebroid structure on the iterated shifted tangent bundle. Conceptually, one may hope to reduce or even avoid such auxiliary choices by working instead with canonical multivector-bundle models \cite{madeleine-malte}; this idea appeared in an earlier version of the present article, but we now defer it to a separate future project. \\

{\bf Acknowledgement} We thank enormously Rui Fernandes, who proposed to us a method using $S_n$ symmetry based on the technique in \cite{rui} to reduce the complexity of the combinatorial problem dramatically. This method would have almost solved the problem, but we discovered that at a certain place it could not generalize to Lie $n$-groupoids when $n\ge 2$. 
We would like to thank very much Christian Blohmann and Lory Aintablian, who shared with us their insights on tangent functors in general categories.
We also give our warmest thanks to Miquel Cueca, Florian Dorsch,  Madeline Jotz, Branislav Jurco, Joost Nuiten, Dimitry Roytenberg, Pavol Severa and Tilmann Wurzbacher. The authors thank the anonymous referee for their careful reading and suggestions for improving the manuscript. We are supported by DFG ZH 274/3-1, the Procope Project GraNum of the DAAD and Campus France, RTG 2491. Finally, we thank very much the ESI Vienna program Higher Structures and Field Theory, Huazhong University of Science and Technology, and the IHP Paris Trimester  Higher Structures in Geometry and Mathematical Physics,  where the project was worked on. 

\section{Lie $n$-groupoids and graded manifolds}

\subsection{Simplicial manifolds and their horns}

A {\bf simplicial manifold}
$X_\bullet$ is a contravariant
functor from $\Delta$, the category of finite ordinals 
\begin{equation}\label{eq:ord}
   [0]=\{0\}, \qquad [1]=\{0, 1\},\quad \dotsc,\quad
[l]=\{0, 1,\dotsc, l\},\quad\dotsc, 
\end{equation}
with order-preserving maps, to the category of manifolds. More precisely, $X_\bullet$ consists of a tower of manifolds $X_l$, face maps $d^l_k: X_l \to X_{l-1}$ for $k=0, \dots, l$, and degeneracy maps $s^l_k: X_l \to X_{l+1}$. These maps satisfy the following simplicial identities
\begin{equation}\label{eq:face-degen}
    \begin{array}{lll}
        d^{l-1}_i d^{l}_j = & d^{l-1}_{j-1} d^l_i &\text{if}\; i<j,  \\
       s^{l}_i s^{l-1}_j =& s^{l}_{j+1} s^{l-1}_i & \text{if}\; i\leq j,
    \end{array}\qquad  d^l_i s^{l-1}_j =\left\{\begin{array}{ll}
    s^{l-2}_{j-1} d^{l-1}_i  & \text{if}\; i<j, \\
    \id  & \text{if}\; i=j, j+1,\\
    s^{l-2}_j d^{l-1}_{i-1} & \text{if}\; i> j+1.
 \end{array}\right.
\end{equation}
We may drop the upper indices and just write $d_i$ and $s_i$ for simplicity when the context is clear later in our article. 

Similarly, we may define other simplicial objects in other categories, such as simplicial sets, simplcial vector spaces, which we will meet in this article. For any category $\mathcal C$, we denote by $\simp{\mathcal C}$ the category of its simplicial objects.

\begin{example}[Pair groupoids in $\mathcal C$]\label{ex:pair}
In any category $\mathcal C$, which admits finite products, for any object $X\in \mathcal C$, there exists a pair groupoid $X\times X\rightrightarrows X$, whose nerve we denote by $X_\bullet$. It can be seen as the functor $[n]\mapsto X^{[n]}\cong X\times .... \times X$ (n+1 copies).
\end{example}

\begin{example}[Discrete groupoids in $\mathcal C$]
In any category $\mathcal C$, for any object $X\in \mathcal C$, the functor $[n]\mapsto X$ defines a simplicial manifold (with all faces and degeneracies being the identity). It is distinct from the nerve of the pair groupoid and, somewhat abusively, we will simply denote it $X\in \simp{\mathcal C}$.
\end{example}

The following simplicial sets, which may be viewed as simplicial manifolds with discrete topology (e.g., in Def. \ref{def:lie-n-gpd}), play an important role for us. They are the simplicial {\bf $l$-simplex} $\Delta[l]$ and
the {\bf horn} $\Lambda[l,j]$,
\begin{equation}\label{eq:simplex-horn}
\begin{split}
(\Delta[l])_k & = \{ f: [k] \to [l] \mid f(i)\leq
f(j),
\forall i \leq j\}, \\
(\Lambda[l,j])_k & = \{ f\in (\Delta[l])_k\mid \{0,\dots,j-1,j+1,\dots,l\}
\nsubseteq \{ f(0),\dots, f(k)\} \}.
\end{split}
\end{equation}
In fact the horn $\Lambda[l,j]$ is a simplicial set obtained from the
simplicial $l$-simplex $\Delta[l]$ by taking away its unique
non-degenerate $l$-simplex as well as the $j$-th of its $l+1$
non-degenerate $(l-1)$-simplices, as in the following picture (in
this paper all the arrows are oriented from bigger numbers to
smaller numbers): \vspace{.6cm}

\centerline{\epsfig{file=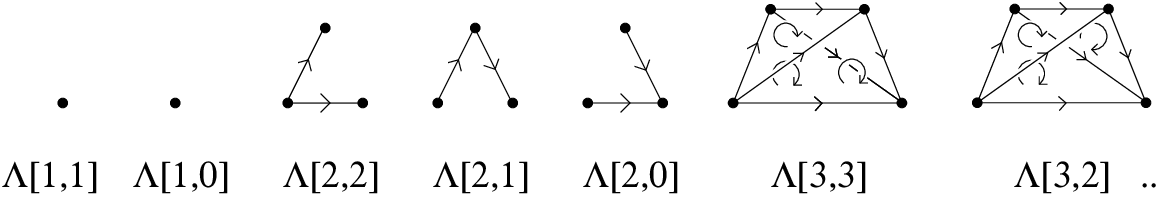,height=1.9cm}}

\begin{definition} \label{def:lie-n-gpd}
 A \textbf{Lie $n$-groupoid} \cite{getzler, henriques, z:tgpd-2} is a simplicial manifold $X_\bullet$ where the natural projections
\begin{equation}\label{eq:horn-proj}
 p^l_{j}:X_l=\Hom(\Delta[l],X_\bullet)\to \Hom(\Lambda[l,j], X_\bullet)=: \Horn{l}{j}X
\end{equation}
 are surjective submersions\footnote{Submersions for Banach manifolds is also introduced in \cite{lang}. It is sometimes called a split submersion in other literature, namely we need local splitting in charts.} for all $1\le l\ge j\ge 0$ and diffeomorphisms for all $0\le j\le l>n$. It is further a \textbf{Lie $n$-group} if $X_0=pt$.  We call the space  $\Horn{l}{j}X$ the $(l,j)$-\textbf{horn} and the projections $p^l_{j}$ \textbf{horn projections}.
 
If we replace surjective submersions by submersions, and isomorphisms by injective \'etale maps, in the requirement for the natural projections $p^l_{j}$, we say that $X_\bullet$ is a \textbf{local Lie $n$-groupoid}, see \cite[Defi.1.1]{zhu:kan}. Similarly, it is further a \textbf{local Lie $n$-group} if $X_0=pt$. 
\end{definition}

\begin{remark}\label{remark:1-2}
A simplicial map between two simplicial sets \(S\) and~\(T\) is a
family of maps \(S_n\to T_n\) that intertwine the face and
degeneracy maps on \(S\) and~\(T\).  If we fix~\(S\), we may view
the set of all simplicial maps \(S\to T\) as the limit of a certain
diagram; the diagram is indexed by the category~\(\mathcal I\) with
objects \(\bigsqcup S_n\) and arrows \((s, f)\colon s\mapsto
f^*(s)\) for all \(s\in S_n\), \(f\colon [m]\to [n]\); the diagram
maps \(s\in S_n\) to the corresponding element in~\(T_n\) and the
arrow \((s,f)\) to \(f^*\colon T_n\to T_m\).  If we replace~\(T\) by
a simplicial object in a category~\(\Cat\), then this defines a
diagram in~\(\Cat\).  We let \(\Hom(S,T)\) be its limit in~\(\Cat\),
if it exists (in general, it may be defined only as a presheaf
over~\(\Cat\)). Or in the case of a {\bf Lie $n$-groupoid}, we can roughly view simplicial sets $\Delta[m]$ and $\Lambda[m,j]$ as
simplicial manifolds with their discrete topology so that $\Hom(S,X)$
denotes the set of homomorphisms of simplicial manifolds, with its
natural topology resp. smooth structure. 
Thus, $\Hom(\Delta[m],X)$ is just another name for $X_m$.
However, it is not obvious that $\Hom(\Lambda[m,j],X)$ is again a manifold, and it is a result of \cite[Corollary 2.5]{henriques}.
\end{remark}

\begin{example}[Lie $1$-groupoids]\label{ex:1group}
It is well known that any Lie $1$-groupoid $K_\bullet$ is always the simplicial nerve $NK_\bullet$ of a usual Lie groupoid $K\rightrightarrows M$. Here we recall that $NK_0=M$ and for $i\geq 0$ define $NK_i=\{(k_1,\cdots, k_i)\in K^{\times i}\ |\ s(k_j)=t(k_{j+1})\}$. The faces and degeneracies are given by  \begin{equation*}
    \begin{array}{rlrl}
        d_0(k)=&s(k), & d_1(k)=&t(k),\\
        d_0(k_1, \dots, k_i)=&(k_2,\dots ,k_i),& d_j(k_1, \dots, k_i)=&(k_1,\dots,  k_jk_{j+1}, \dots, k_i),\\
        d_i(k_1, \dots, k_i)=&(k_1, \dots, k_{i-1}),
         &  s_j(k_1, \dots, k_i) =& (k_1, \dots, k_j, 1_{s(k_j)}, k_{j+1}, \dots, k_i).
    \end{array}
\end{equation*} 
\end{example}

\begin{example} [Lie 1-group]\label{NG}
A particular case of the previous example is when we consider a Lie group $G$. Then its simplicial nerve $NG_\bullet$ is given by $NG_i=G^{\times i}$ and the faces and generacies are just
\begin{equation*}
    \begin{array}{ll}
        d_0(g_1, \dots, g_i)=(g_2,\dots ,g_i),& d_j(g_1, \dots, g_i)=(g_1,\dots,  g_jg_{j+1}, \dots, g_i),\\
        d_i(g_1, \dots, g_i)=(g_1, \dots, g_{i-1}),
         &  s_j(g_1, \dots, g_i) =(g_1, \dots, g_j, e, g_{j+1}, \dots, g_i).
    \end{array}
\end{equation*} 
\end{example}

\subsection{Graded manifolds and their tangent spaces}
We are interested in two sorts of graded manifolds, the first of which are N-manifolds. Our objects of interest --- Lie $n$-algebroids are N-manifolds equipped with a degree 1 differential $Q$, also known as a homological vector field. However, we will need our version of the fat point and the test object to be $\Z$-graded manifolds. 

{Various flavours of $\Z$-graded manifolds exist throughout the literature, cf. \cite{Voronov02} or \cite{Vysok2022, kotov-salnikov:21}}. Here we give a short recall with a slightly modified definition of $\Z$-graded manifolds according to our needs in this article, while the one for N-manifolds is equivalent to the standard references  (see \cite[Section1.1]{Zambon-Zhu:quotient} and references therein). 
\begin{definition}\label{def:z-graded}
A $\Z$-{\bf graded manifold}  $\huaM$ is 
a $\Z$-graded graded-commutative 
algebra $C(\huaM)$ (called the ``function algebra'') 
such that there exist a $\Z$-graded vector bundle $V_\bullet$ and an isomorphism 
\begin{equation} \label{eq:iso-gmfd}
    C(\huaM)\cong \Gamma(S^{\ge 0} V^*_\bullet). 
\end{equation}
Furthermore, when $V_\bullet$ is concentrated on strictly negative degrees, we call $\huaM$ an $N$-manifold. 
Throughout the article, unless otherwise mentioned,  we always assume that a graded vector bundle $V_\bullet$ has 
finite rank in each component $V_i$.
\end{definition}
\begin{remark}\label{rmk:z-graded}
In this definition and throughout this article, $S^{\ge 0}$ denotes the (augmented) symmetric algebra, that is, it includes the arity 0 part $\Gamma(S^{0} V^*_\bullet) := C^\infty (M)$, which has degree 0. Otherwise, the degree of an element in $\Gamma(S^{\ge 0} V^*_\bullet)$ is the sum of the degrees of each term in the symmetric product.

For convenience, we define graded manifolds using just the global function algebra and not using a sheaf of algebras. This way, the Batchelor-Gawedzki Theorem, which has certain subtleties in the $\mathbb Z$-graded case (cf. \cite{kotov-salnikov:21}), is part of the definition for us.
\end{remark} 

\begin{definition}\label{defi:z-graded-mor}
    A morphism of $\Z$-graded manifolds $\huaM \to \huaN$ is 
    a $\Z$-graded algebra 
    morphism $\Phi: C(\huaN)\to C(\huaM)$.
\end{definition}

\begin{remark}{We define our maps purely as algebra homomorphisms, without an underlying base map, because for our later differentiation construction we will need that morphisms can map smooth coordinates (i.e., those coming from $C^{\infty}(M)$) to formal ones (i.e., degree 0 elements in $\Gamma(S^{\geq 1}V^*)$)}.
\end{remark}

We denote the category of $\Z$-graded manifolds by $\Z\Mfd$ and the category of N-manifolds by $\NMfd$, the subcategory of graded manifolds whose vector bundles have non-trivial components only in strictly positive degree. {Note that the function algebras of N-Manifolds are not strictly positively graded: In degree 0, they have the algebra of smooth functions of the underlying manifold.}
Since morphisms preserve degrees,  $\NMfd$ is a fully faithful subcategory of $\Z\Mfd$.

\begin{remark} \label{rm:strict}
\quad
\begin{enumerate}
    \item 
Clearly, a $\Z$-graded vector bundle $V_\bullet$ gives us a $\Z$-graded manifold $\mathfrak{S}(V_\bullet)$, and 
\begin{equation} \label{eq:S}
    \mathfrak{S}: \Z\Vectbd\to \Z\Mfd
\end{equation}
is a functor from the category of $\Z$-graded vector bundles to that of $\Z$-graded manifolds. 
Given a $\Z$-graded manifold $\huaM$, a choice of a vector bundle $V_\bullet$ together with a choice $i_{V_\bullet}$ of the isomorphism \eqref{eq:iso-gmfd}, 
that is, $(V_\bullet, i_{V_\bullet})$,  is called a {\bf splitting} of $\huaM$. However, later in the article, when there is no confusion of choice of isomorphisms in the context, we also refer to $V_\bullet$ alone as a splitting of $\huaM$.
\item We call a morphism $\Phi: \huaM \to \huaN$ between $\Z$-graded manifolds {\bf strict} with respect to some splitting of $\huaM$ and $\huaN$ (or simply strict for simplicity when the context is clear) if it comes from a morphism of the corresponding $\Z$-graded vector bundles. More precisely, there are a splitting $V_\bullet$ of $\huaM$ and $W_\bullet$ of $\huaN$ and a $\Z$-graded vector bundle morphism $f_\bullet: V_\bullet \to W_\bullet$ such that $S^{\ge 0}(f_\bullet)$ is the composed map 
\begin{equation*}
    \Gamma(S^{\ge 0}W_\bullet^*) \xrightarrow{\cong} C(\huaN) \xrightarrow{\Phi} C(\huaM) \xrightarrow{\cong} \Gamma(S^{\ge 0}V_\bullet^*). 
\end{equation*}

\item Moreover, $\mathfrak{S}$ clearly restricts to $\Z^{<0}$-graded vector bundles, gives rise to a functor 
\begin{equation*}
    \mathfrak{S}: \Z^{<0}\Vectbd\to \NMfd
\end{equation*}
from the category of $\Z^{<0}$-graded vector bundles to that of N-manifolds. We use the same terminology of  ``splitting'' and ``strict'' as above. 

\item The concept of a strict morphism involves splittings of the graded manifolds. Thus, the composition of strict maps is {\bf not} necessarily strict. We need the splittings of the  graded manifold in the middle to be the same, because an automorphism of a graded manifold does {\bf not} necessarily come from a vector bundle morphism between two different splittings.  

\item There exist morphisms of graded manifolds that are not strict, no matter which splittings we choose. Here is an example: We consider the graded manifold $\mathcal M=\mathbb R^2[1]$, whose function algebra is generated by two elements $\epsilon_1,\epsilon_2$ in degree 1 and the graded manifold $\mathcal N=\mathbb R[2]$, whose function algebra is generated by one element $\delta$ in degree 2. The map $\delta\mapsto \epsilon_1\epsilon_2$ defines a non-trivial morphism $\mathcal M\to\mathcal N$; however, any strict morphism from $\mathcal M$ to $\mathcal N$ would be trivial for degree reasons.

\item Strict embeddings are clearly monomorphisms in either $\Z\Mfd$ or $\NMfd$. Recall that a {\bf monomorphism} in a category is a left-cancellative morphism. That is,  $ X \xrightarrow{f} Y$ is a monomorphism if for all objects $Z$ and all morphisms $g_1, g_2:  Z \to X$,  $f\circ g_1=f\circ g_2$ implies $g_1=g_2$.
\end{enumerate}
\end{remark}

Given a graded manifold $\mathcal M$, the manifold $M$ and the graded vector bundle $V\to M$ such that $ C(\mathcal M)\cong \Gamma(S^{\geq 0}V^*)$ are unique up to isomorphism. We call $M$ the {\bf body} of $\mathcal M$, and we say that $\huaM$ is over $M$. When one of the graded manifolds $\mathcal M,\mathcal N$ is over a point, we define  their {\bf product} by the usual tensor product $C(\mathcal M\times \mathcal N)= C(\mathcal M)\otimes  C(\mathcal N)$. 

\begin{remark}
Notice the isomorphism \eqref{eq:iso-gmfd}  is highly non-canonical. Thus, the inclusion\footnote{In fact, neither the inclusion from $C^\infty(M)$ to $C(\mathcal M)$ nor the projection in the other direction in general is canonical.} from $C^\infty(M)$ to $C(\mathcal M)$ is not canonical, unless $M$ is a point.  This makes defining products of graded manifolds $\mathcal M,\mathcal N$ intrinsically in general a little subtle\footnote{One can of course pick splittings and construct the graded manifold corresponding to the product of the graded vector bundles. But then one will need to pay attention to choices of splittings.}. Since we only need products of graded manifolds in the situation when one of them is over a point, we give the above definition and defer the general definition to future work. 
\end{remark}

\begin{example} \label{ep:T1M} Let $M$ be a manifold. We denote by $T[1]M$ the $\Z$-graded manifold whose function algebra is $\Omega^\bullet(M)$. It has a canonical splitting, given by vector bundle $V_\bullet=V_{-1}=TM$ in degree -1.\\

More generally, for a $\Z$-graded or $\Z^{<0}$-graded vector bundle  $V_\bullet$,  we denote $V[k]_\bullet$ (or $V_\bullet[k]$ by abuse of notation sometimes) its $k$-shift (to the left), that is, $V[k]_i = V_{k+i}$.
In particular, if $V$ is a vector bundle, $V[k]$ is a $\Z$-graded vector bundle which concentrates in degree $-k$. 

Thus, with this notation, $T[k]M$ then has a double meaning: it is the $\Z$-graded manifold corresponding to $\Omega^\bullet(M)$ with a suitable shift of degree,  and it is also a $\Z$-graded vector bundle with $TM$ on degree $-k$. That is,  $T[k]M=\mathfrak{S}(T[k]M)$. We hope that this little confusion is harmless for readers.
\end{example}

\begin{example}\label{ex:fatpoint}
We will denote the split graded manifold $\mathbb R[-1]$ by $D$, where $\mathbb R$ is the 1-dimensional trivial vector bundle over a point. We write its function algebra as $C(D)=\mathbb R[\epsilon]$ with $deg(\epsilon)=-1$ and hence $\epsilon^2=0$. Careful: $\Z$-graded manifold $\mathbb R[-1]$ is very different from $\mathbb R[1]$, even though both of them look like an odd line in super geometry, where objects are $\Z/2\Z$-graded manifolds.
\end{example}

In algebraic geometry, the concept of ``fat point'' is used to detect tangent objects {\cite{perrin:ag2008, weilproche}}. 
We will see that $D$ plays the role of a graded and 1-shifted version of the ``fat point'' for differential geometry. In order to make sense of it, we require the notion of presheaves.

Let $\mathcal C$ be a category. We denote by $\pSh{\mathcal C}$ the category of {\bf presheaves} (Set-valued contravariant functors) on $\mathcal C$. There is a functor $X\mapsto \hom(\cdot , X):\mathcal C\to \pSh{\mathcal C}$ and we call a presheaf {\bf representable}, if it is isomorphic to something in the image of this functor. Furthermore, for $A,B\in \mathcal C$, we define  a presheaf on $\huaC$, when the product of $A$ with any object in $\mathcal C$ exists, 
\begin{equation}\label{eq:Hom}
    \Hom(A, B): T\mapsto \hom(A\times T, B),
\end{equation}
where $T\in \mathcal{C}$ is an arbitrary test object and the uncapitalized $\hom$ denotes the set of morphisms in $\huaC$. Using this notion, we can define the tangent object of a graded manifold. We have the following theorem:

\begin{thm}\label{thm:splitgrad} Let $\mathcal M$ be $\Z$-graded manifold with a splitting $V_\bullet\to M$. Then the presheaf $\Hom(D, \mathcal M)$ is representable and a connection on $V_\bullet^*$ induces a splitting of $\Hom(D, \mathcal M)$ by $V_{\bullet}\oplus T[1]M\oplus V[1]_{\bullet}$. Thus, we denote $\Hom(D, \mathcal M)$ by $T[1]\huaM$, the 1-shifted tangent bundle of $\huaM$. 
\end{thm}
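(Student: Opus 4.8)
\emph{Overall strategy.} I would compute the presheaf $\Hom(D,\huaM)$ on an arbitrary test object $T\in\Z\Mfd$ and exhibit a natural isomorphism of presheaves $\Hom(D,\huaM)\cong\hom(-,\huaN)$, where $\huaN:=\mathfrak{S}(V_\bullet\oplus T[1]M\oplus V[1]_\bullet)$; the connection on $V_\bullet^*$ will enter only in order to pin down this last identification, i.e. the splitting of $\huaN$.

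\emph{Unravelling the test algebra.} Since $D$ is over a point, the product $D\times T$ exists and $C(D\times T)=C(D)\otimes C(T)=C(T)\oplus\epsilon\cdot C(T)$ as a graded-commutative algebra, with $\epsilon^2=0$ and $\deg\epsilon=-1$. By Definition~\ref{defi:z-graded-mor} an element of $\Hom(D,\huaM)(T)=\hom(D\times T,\huaM)$ is a graded-algebra morphism $\Phi\colon C(\huaM)\to C(D\times T)$, which I write uniquely as $\Phi=\Phi_0+\epsilon\,\Phi_1$ with $\Phi_0,\Phi_1\colon C(\huaM)\to C(T)$. Expanding $\Phi(fg)=\Phi(f)\Phi(g)$ and tracking the Koszul sign produced when $\epsilon$ is commuted past a homogeneous element, one sees that $\Phi$ is a degree-$0$ algebra morphism exactly when $\Phi_0$ is an algebra morphism $C(\huaM)\to C(T)$ (a $T$-point of $\huaM$) and $\Phi_1$ is a degree-$(+1)$ derivation along $\Phi_0$, i.e.
\[
\Phi_1(fg)=\Phi_1(f)\,\Phi_0(g)+(-1)^{|f|}\Phi_0(f)\,\Phi_1(g).
\]
Hence, naturally in $T$, $\Hom(D,\huaM)(T)$ is the set of pairs $(\Phi_0,\Phi_1)$ consisting of a morphism $\Phi_0\colon T\to\huaM$ together with a degree-$(+1)$ derivation of $C(\huaM)$ along it; this is the functor of points of the shifted tangent bundle of $\huaM$ in the abstract sense, and naturality in $T$ is immediate from this description.

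\emph{Representability and the splitting via a connection.} It remains to realise this functor by an honest graded manifold and to identify its splitting. For this I would use a connection $\nabla$ on $V_\bullet^*$, equivalently on $V_\bullet$; together with the splitting $V_\bullet$ of $\huaM$ it yields the projection $\pi\colon\huaM\to M$. A degree-$(+1)$ derivation $\Phi_1$ along $\Phi_0$ is a section over $T$, pulled back along $\Phi_0$, of $\mathrm{Der}\,C(\huaM)$ with a degree shift by $+1$; the vertical derivations form a $C(\huaM)$-submodule canonically isomorphic to $\Gamma(\pi^*V_\bullet)$, and $\nabla$ provides a complementary horizontal submodule isomorphic to $\Gamma(\pi^*TM)$. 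Thus $\Phi_1$ is equivalently a \emph{base component} --- the datum of the images of the degree-$(+1)$ generators dual to $TM$, i.e. the $T[1]M$-directions --- together with a \emph{fibre component} --- the datum of the images of a degree-raised copy of the $V^*$-generators, i.e. the $V[1]_\bullet$-directions. Running this through the $\mathfrak{S}$-construction, the pair $(\Phi_0,\Phi_1)$ is precisely the data of a graded-algebra morphism $C(\huaN)\to C(T)$ for $C(\huaN)=\Gamma\!\big(S^{\ge 0}(V_\bullet\oplus T[1]M\oplus V[1]_\bullet)^*\big)$: $\Phi_0$ supplies the images of the $C^\infty(M)$- and $V^*$-generators, the base component of $\Phi_1$ the images of the $T[1]M$-generators, the fibre component the images of the $V[1]_\bullet$-generators. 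I would make both directions explicit on generators in a local frame $\{v^\alpha\}$ of $V^*$ and local coordinates on $M$ --- sending $f\in C^\infty(M)$ to $\Phi_0(f)$, $v^\alpha$ to $\Phi_0(v^\alpha)$, a degree-$(+1)$ generator to $\Phi_1$ evaluated on the corresponding coordinate, and the shifted copy $\hat v^\alpha$ to $\Phi_1(v^\alpha)$ --- and conversely recover $\Phi_0$ by restricting $\psi\colon C(\huaN)\to C(T)$ to the subalgebra $C(\huaM)\hookrightarrow C(\huaN)$ and $\Phi_1$ as the $\Phi_0$-derivation extending $f\mapsto\psi(df)$ and $v^\alpha\mapsto\psi(\hat v^\alpha)$. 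Checking that these formulas glue --- exactly the point at which one uses that the $\hat v^\alpha$ transform under a change of local frame of $V^*$ by the Christoffel symbols of $\nabla$ --- gives a $\nabla$-dependent but global bijection, and one verifies that the two constructions are mutually inverse and natural in $T$. Since $\huaN$ is a graded manifold, this shows $\Hom(D,\huaM)\cong\hom(-,\huaN)$ is representable with the stated splitting $V_\bullet\oplus T[1]M\oplus V[1]_\bullet$, justifying the notation $T[1]\huaM$.

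\emph{Main obstacle.} The crux is that morphisms of $\Z$-graded manifolds in the sense of Definition~\ref{defi:z-graded-mor} need neither respect the body nor be strict: $\Phi_0$ cannot simply be decomposed into ``a point of $M$ plus a section of $V$'', so the whole argument must be carried out intrinsically at the level of the derivation module $\mathrm{Der}\,C(\huaM)$. The key technical point is then that the local formulas identifying this functor with $\mathfrak{S}(V_\bullet\oplus T[1]M\oplus V[1]_\bullet)$ depend on $\nabla$ in precisely the manner needed to be frame-independent. A secondary, purely mechanical source of care is the Koszul-sign bookkeeping in the derivation identity and in the symmetric algebra $S^{\ge 0}$ of the degree-$(+1)$ summands $T[1]M$ and $V[1]_\bullet$.
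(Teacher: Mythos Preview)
Your high-level strategy coincides with the paper's: decompose $\Phi=\Phi_0+\epsilon\Phi_1$, recognise $\Phi_0$ as a morphism $T\to\huaM$ and $\Phi_1$ as a degree-$(+1)$ derivation along it, then use the connection on $V_\bullet^*$ to extract the three summands. The difference lies in the technical core. You propose to view $\Phi_1$ as a section of the pulled-back derivation module $\mathrm{Der}\,C(\huaM)$ and split it into horizontal and vertical parts via $\nabla$, carrying out the identification in local frames. The paper instead works on the dual side: it observes that $\Psi_1$ is a first-order differential operator from $C^\infty(M)\oplus\Gamma(V_\bullet^*)$ to $C(\huaT)$, invokes a theorem of Nestruev to factor it through the first jet bundle, and then uses the connection as a splitting of the jet sequence $0\to T^*M\otimes V_k^*\to J^1V_k^*\to V_k^*\to 0$ to strip out the redundancy. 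The two pictures are essentially dual (derivations/Atiyah algebroid versus differential operators/jet bundle), so your route is viable in principle.

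There is, however, a genuine point the paper handles that your sketch glosses over. Your assertion that ``a degree-$(+1)$ derivation $\Phi_1$ along $\Phi_0$ is a section over $T$, pulled back along $\Phi_0$, of $\mathrm{Der}\,C(\huaM)$'' is precisely the step that requires proof. For this one must know that a derivation $C^\infty(M)\to C(\huaT)$ along $\Phi_0$ factors through $d\colon C^\infty(M)\to\Gamma(T^*M)$, i.e.\ is determined $C^\infty(M)$-linearly by its values on local coordinates via a chain rule $\Phi_1(f)=\sum_i\Phi_0(\partial_i f)\,\Phi_1(x^i)$. This is not a formal consequence of the Leibniz identity --- it can fail for pathological target modules --- and the paper addresses it by checking that $C(\huaT)$ is a \emph{geometrical} $C^\infty(M)$-module (via the retraction $C^\infty(T)\to C_0(\huaT)\to C^\infty(T)$ coming from a splitting of $\huaT$) before invoking the jet-bundle correspondence. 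Your local-frame computation uses exactly this factorisation implicitly, so the step you describe as routine gluing in fact hides the same $C^\infty$-algebraic input that the paper makes explicit. You should either supply this argument directly or, as the paper does, route through the jet formalism.
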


{This theorem seems folklorically known to the graded geometry community, but we failed to find a precise reference, so we provide a proof here. Moreover, we will need to keep precise track of the spot where we use the connection in order to investigate the compatibility between splittings and morphisms later.}

\begin{proof}
Consider a morphism in $Hom(D,\mathcal M)(\huaT)=\hom(D\times \mathcal T,\mathcal M)$, where $\mathcal T$ is a $\Z$-graded manifold with body $T$. Since the body of $D$ is simply a point, such a morphism is given by an algebra morphism $\Psi:C(\mathcal M)\to \mathbb R[\epsilon]\otimes C(\mathcal T)$.  For $\alpha\in C(\mathcal M)$ we can decompose $\Psi$ as
$$
\Psi(\alpha)=\Psi_0(\alpha) + \epsilon\cdot \Psi_{1}(\alpha),
$$
where $\Psi_0$ and $\Psi_{1}$ map to $C(\huaT)$. The algebra $C(\mathcal M)$ is generated by elements $f\in C^\infty(M)$ and $\sigma\in \Gamma(V_k^*), k\in \mathbb Z$. We can translate the fact that $\Psi$ is an algebra homomorphism to the following equations:
\begin{align}
 &\Psi_0(f g)=\Psi_0(f)\Psi_0(g)  \label{eq:T-M} \\
  &\Psi_0(f \sigma)=\Psi_0(f)\Psi_0(\sigma)\\
  &\Psi_{1}(fg)=\Psi_0(f)\Psi_{1}(g)+\Psi_{1}(f)\Psi_{0}(g) \label{eq:Psi-0-1-fg}\\
  &\Psi_{1}(f\sigma)=\Psi_0(f)\Psi_{1}(\sigma)+\Psi_{1}(f)\Psi_{0}(\sigma),  \label{eq:Psi-1-f-sigma}
\end{align}
for $f, g \in C^\infty(M)$ and $\sigma \in \Gamma(V^*_k)$, $k\in \Z$.  The first two equations imply that $\Psi_0$ gives rise to a morphism of graded manifold from $\huaT$ to $\huaM$ 
The last two equations show that $ \Psi_{1}$ satisfies a Leibniz rule over $\Psi_0$.\\

In the following, we will need some knowledge on jet bundles and differential operators and we give explicit references in \cite{Nestruev03}.  The fact that $\Phi$ is an algebra homomorphism implies that $\Psi_{1}$ satisfies  
\begin{equation}\label{eq:diff-1}
    [f, [g, \Psi_1]]=0, \quad \forall f, g \in C^\infty(M), 
\end{equation}
where $[f, P]$ denotes the operator $[f,P](s)=P(fs)-fP(s)$ for all $s\in C(\huaM)$. Eq.~\eqref{eq:diff-1} implies that $\Psi_{1}$ is a differential operator of order 1 in $\Diff_1(C^\infty(M)\oplus \Gamma(V^*_\bullet), C(\huaT))$ between the two $C^\infty(M)$-modules.  Here $C^\infty(M)$ acts on $C(\mathcal T)$ via $\Psi_0$.  We want to apply Theorem 14.25 from \cite{Nestruev03}, stating that differential operators of order $l$ from a projective module into a geometrical one correspond to module homomorphisms from the corresponding $l$-jet bundle. Clearly, the module $ \Gamma(V^*_k)$ (resp. $C^{\infty}(M)$) is projective, so we only need to verify that $C(\mathcal T)$ is geometrical as a $C^{\infty}(M)$-module. For an $\R$-algebra $A$, an $A$-module is called geometrical (section 12.11 in \cite{Nestruev03}) if the space of invisible elements is trivial:
$$
Inv(P):=\bigcap_{h\in \hom_{\alg}(A,\mathbb R)} \ker(h)\cdot P \overset{!}{=}0. 
$$

Now we show that $C(\mathcal T)$ is geometric as a $C^\infty(M)$-module. By  the naturality of geometrization (15.29 in \cite{Nestruev03}), we only need to show that that $C(\mathcal T)$ is a geometric $C_0(\mathcal T)$-module, since $C^\infty(M)$ acts on $C(\mathcal T)$ via $\Psi_0$. Since a projective module is always geometric  \cite[Section12]{Nestruev03}, using a splitting of $\mathcal T$, we can see that $C(\mathcal T)$ is a geometric $C^\infty(T)$-module. Moreover, we get morphisms $C^\infty(T)\overset{i}\to C_0(\mathcal T)\overset{p}\to C^\infty(T)$ composing to the identity. This means that the map $i^*:\hom_{\alg}(C_0(\mathcal T),\mathbb R)\to \hom_{\alg}(C^\infty(T),\mathbb R)$ is surjective. This implies (by going through the proof of 15.29 in \cite{Nestruev03} backwards) that $C(\mathcal T)$ is a geometric $C_0(\huaT)$-module.

Therefore Theorem 14.25 from \cite{Nestruev03} is applicable and $\Psi_{1}$ is uniquely determined by the $C^{\infty}(M)$-module map  $J^1\Phi_1: \Gamma(T^*M \oplus J^1V^*_\bullet) \to C(\huaT)$, with components, 
$$
J^1_{T^*}\Psi_{1}:\Gamma(T^*M)\to C_{1}(\mathcal T),~~ J^1_k\Psi_{1} :\Gamma(J^1V_k^*)\to C_{k+1}(\mathcal T),  \quad \forall k \in \Z. 
$$

Now we would like to see how much and how redundantly the information of $(\Psi_0, \Psi_{1})$ can be encoded using $(\Psi_0, J^1\Psi_{1})$. Notice that the space $J^1V_k^*$ contains $T^*M\otimes V_k^*$ as a subspace (see \eqref{eq:jet-split}), on which the values of $J^1_k\Psi_{1}$ are already determined by $\Psi_0$ and $J^1_{T^*} \Psi_{1}$. Hence, to get rid of the redundancy, we have to pick a complement of $T^*M\otimes V_k^*$ in $J^1V_k^*$, i.e., a splitting $s_k:V_k^*\to  J^1V_k^*$ of the following sequence of vector bundles over $M$:
\begin{equation}\label{eq:jet-split}
0\to T^*M\otimes V_k^*\to J^1V_k^* \to V_k^* \to 0, 
\end{equation}
or, in other words, a $TM$ connection on $V_k^*$ for each $k\in\Z$. We now fix a series of such connections $s_\bullet$. 

By property of 1-jet, $\Phi_1 = J^1\Phi_1 \circ j^1: \Gamma(V^*_k) \to C(\huaT)$. 
Let $\sigma \in \Gamma(V_k^*)$ and $f\in C^\infty(M)$. By definition $j^1(f\sigma)=df\otimes \sigma +fj^1\sigma$. Since $J_k^1\Psi_{1}$ is $C^{\infty}(M)$-linear with $C^\infty(M)$ acting on $C(\huaT)$ via $\Psi_0$,  \eqref{eq:Psi-1-f-sigma} implies that 
 $$J_k^1 \Psi_{1}(df\otimes \sigma)=\Psi_{1}(f)\Psi_{0}(\sigma).$$
So it suffices to know the values of $J^1_k\Psi_{1}$ on a complement of $T^*M\otimes V_k^*$, i.e., on the image of $s_\bullet$. This means that the information of $\Psi$ is uniquely encoded by:
\[
\Psi_0: \Gamma(V_\bullet)^* \to C(\huaT), \quad  J^1_{T^*}\Psi_{1}: \Gamma(T^*M)\to C_1(\huaT), \quad J^1_k \Psi_{1} \circ s_k: \Gamma(V^*_k) \to C_{k+1}(\huaT).
\]
Thus, $\Hom(D, \huaM)$ is representable and a connection on $V^*_\bullet$ gives a splitting of it by $V_\bullet \oplus T[1]M \oplus V[1]_{\bullet}$.   
\end{proof}

For ordinary manifolds, the notation introduced in this theorem is consistent with the existing one.

\begin{example}
    Let $\mathcal M=M$ be an ordinary manifold (with $C(\mathcal M)=C^\infty(M)$). Then $Hom(D,\mathcal M)$ is representable by $T[1]M$. This statement can be seen as a special case of the previous theorem. 
\end{example}

Inspired by the above theorem, for a $\Z$-graded vector bundle $V_\bullet$ over $M$, we define its 1-shifted tangent to be 
\[
T[1]V_\bullet := V_\bullet \oplus T[1]M \oplus V[1]_\bullet. 
\] Thus, Theorem \ref{thm:splitgrad} tells us that a connection on $V^*_\bullet$ gives us an isomorphism 
\begin{equation}\label{eq:T1V}
    T[1]\mathfrak{S}(V_\bullet) \cong \mathfrak{S} (T[1] V_\bullet).
\end{equation}

\subsection{Iterated tangent bundles for manifolds} \label{sec:iterated-tan}
The object of study for us will be the iterated tangent bundles of an ordinary manifold:
$$
Hom(D^k,M)=T[1]Hom(D^{k-1},M)=T[1]T[1]Hom(D^{k-2},M)=...=: T[1]^kM
$$
Now the simplicial and graded worlds start merging: Since the nerve of the pair groupoid
\begin{equation} \label{eq:pair-D}
    D_\bullet=\{D_{k}=D^{[k]}\}_{k\in \mathbb N}
\end{equation}
carries a simplicial structure, the iterated tangent bundles will carry a cosimplicial one, which---in much larger generality - is the object of study of \cite{loriChristian}. \\

\begin{cor}(of Theorem \ref{thm:splitgrad})
Let $M$ be a manifold with a fixed connection $\nabla$ on $T^*M$. The presheaf $T[1]^kM:=Hom(D^k,M)$ is representable, 
\begin{equation} \label{eq:T1-k-M}
    T[1]^k M:=\Hom(D^{\times k}, M) \cong \mathfrak{S}( T[1]M^{\binom{k}{1}} \oplus T[2]M^{\binom{k}{2}} \oplus \dots \oplus T[k]M^{\binom{k}{k}}), 
\end{equation} 
where $T[i]M^{\binom{k}{i}}:= T[i]M \oplus \dots \oplus T[i]M$ denotes ${\binom{k}{i}}$ copies of direct sum of $T[i]M$ over $M$.  
\end{cor}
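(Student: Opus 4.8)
The plan is to argue by induction on $k$, using the recursion $T[1]^kM=T[1]\bigl(T[1]^{k-1}M\bigr)$ recorded just above together with Theorem~\ref{thm:splitgrad}, and then to collect multiplicities with Pascal's rule. For the base case $k=1$, the Example following Theorem~\ref{thm:splitgrad} already gives $\Hom(D,M)\cong T[1]M=\mathfrak{S}(T[1]M)$, which is the asserted formula since $\binom{1}{1}=1$; here no connection is needed because the graded-manifold splitting of the ordinary manifold $M$ is the zero bundle.

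For the inductive step, I would assume that $T[1]^{k-1}M$ is representable by the split graded manifold $\mathfrak{S}(W_\bullet)$ with $W_\bullet=\bigoplus_{i=1}^{k-1}T[i]M^{\binom{k-1}{i}}$ over the body $M$. The dual $W_\bullet^*$ is, in each degree $i\in\{1,\dots,k-1\}$, a direct sum of $\binom{k-1}{i}$ copies of $T^*M$, so the fixed connection $\nabla$ on $T^*M$ induces (diagonally) a connection on $W_\bullet^*$. Applying Theorem~\ref{thm:splitgrad} to $\mathcal M=\mathfrak{S}(W_\bullet)$ with this connection then shows that $T[1]^kM=\Hom(D,\mathfrak{S}(W_\bullet))$ is representable, with splitting $W_\bullet\oplus T[1]M\oplus W[1]_\bullet$.

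It remains to collect terms by degree. Since $W[1]_\bullet\cong\bigoplus_{i=1}^{k-1}T[i+1]M^{\binom{k-1}{i}}=\bigoplus_{i=2}^{k}T[i]M^{\binom{k-1}{i-1}}$, the $T[i]M$-multiplicity in $W_\bullet\oplus T[1]M\oplus W[1]_\bullet$ is $\binom{k-1}{1}+1=k$ for $i=1$, it is $\binom{k-1}{i}+\binom{k-1}{i-1}$ for $2\le i\le k-1$, and it is $\binom{k-1}{k-1}=1$ for $i=k$. By $\binom{k}{i}=\binom{k-1}{i}+\binom{k-1}{i-1}$ these are exactly $\binom{k}{1}$, $\binom{k}{i}$ and $\binom{k}{k}$, so the splitting is $\bigoplus_{i=1}^{k}T[i]M^{\binom{k}{i}}$, which closes the induction.

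The step I expect to be the main point to pin down --- rather than a genuine difficulty --- is that iterating Theorem~\ref{thm:splitgrad} asks, at each stage, not for a connection on $T^*M$ but for one on the dual of the current splitting; what makes the single fixed $\nabla$ suffice is precisely that this dual is, at every stage, a finite direct sum of copies of $T^*M$ over the fixed body $M$, so $\nabla$ determines all the required connections consistently and no additional choices enter. A secondary point worth stating explicitly is that the recursion $T[1]^kM=T[1]\bigl(T[1]^{k-1}M\bigr)$ is legitimate because at each stage the inner presheaf is representable by an honest graded manifold, so that Theorem~\ref{thm:splitgrad} does apply to it.
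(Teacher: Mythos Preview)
Your proof is correct and follows essentially the same approach as the paper's: induction on $k$ via repeated application of Theorem~\ref{thm:splitgrad} together with the identification $T[1]\mathfrak{S}(V_\bullet)\cong\mathfrak{S}(V_\bullet\oplus T[1]M\oplus V[1]_\bullet)$, followed by Pascal's rule to collect the multiplicities. You have in fact spelled out more carefully than the paper does why the single connection $\nabla$ on $T^*M$ suffices at every stage.
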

\begin{proof}
This follows from  Example \ref{ep:T1M} and a repeated application of Theorem \ref{thm:splitgrad} with the observation made above Eq.~\eqref{eq:T1V}.  The equation ${\binom{k}{i}} + {\binom{k}{i-1}} = {\binom{k+1}{i}}$ will be repeatedly applied in the induction process.  
\end{proof}
In the sequel, we will have to do some combinatorics on \eqref{eq:T1-k-M}. For simplicity, let us denote the graded vector bundle on the right-hand side by $T^k_\bullet$, that is, 
\begin{equation}\label{eq:T-k}
    T^k_{-i}= \bigoplus_{\{I\subset [k-1]: |I|=i\}} TM_I,
\end{equation}
where $I$ as an $i$-element subsets of $[k-1]=\{0,...,k-1\}$ can be also understood as a strictly increasing $i$-multi-index with values in $[k-1]$, and $TM_I$ is a copy of $TM$ indexed by $I$. 
\begin{notation}\label{not:index}
We introduce some notation. Let $I\subset [k-1]$ be a multi-index. 
\begin{itemize}
    \item For $v$ in $TM$, we write $v^I$ for an element in $T^k_\bullet$, which is $v$ in the $I$-th component $TM_I$ and 0 in all other components.
    \item For  $i\in \Z$, we write $\overset{i\to}{I}$ for the multi-index with all indices in $I$ after $i$ pushed to the right by one. For example $I=(2, 3, 5, 7)$, then $\overset{3\to}{I}=(2, 3, 6, 8)$,  $\overset{4\to}{I}=(2, 3, 6, 8)$, $\overset{-1\to}{I}=(3, 4, 6, 8)$, and $\overset{10\to}{I}=(2, 3, 5, 7)$
    \item Similarly,  $\overset{i\leftarrow }{I}$ denotes the index where everything in $I$ after $i$ is pulled by one to the left, e.g., $J=(2, 3, 6, 8)$, then $\overset{2\leftarrow }{J}=(2, 2, 5, 7)$, $\overset{3\leftarrow }{J}=(2, 3, 5, 7)$ and $\overset{4\leftarrow }{J}=(2, 3, 5, 7)$. We notice that while $\overset{i\to}{I}$ is still a strictly increasing $|I|$-multi-index, $\overset{i\leftarrow }{I}$ might not be such a multi-index anymore because a number can appear twice, as the example shows.  In such cases, when $\overset{i\leftarrow }{I}$ is no longer a strictly increasing $|I|$-multi-index, we take $v^{\overset{i\leftarrow }{I}}=0$ as there is no such component in the sum of \eqref{eq:T-k}.
\end{itemize}
\end{notation}

This immediately allows us to translate the simplicial identities on $D_\bullet$ to co-simplicial identities on $T[1]^\bullet M$ with splitting \eqref{eq:T-k}.

\begin{lemma} 
The face map $d_j:D^{[k]}\to D^{[k-1]}$, for $j\in [k]$ corresponds to the coface map $d_j^*:T^{k-1}_\bullet\to T^{k}_\bullet$, and the degeneracy maps $s_i:D^{[k-1]}\to D^{[k]}$, for $i\in [k-1]$ corresponds to the codegeneracy map $s_i^*:T^{k}_\bullet\to T^{k-1}_\bullet$. They re-arrange the indices as follows:
\begin{equation*}
    d^*_j: \xymatrix@=0.3cm{0, \ar[d] & 1, \ar[d] & \dots, & j-1, \ar[d] & j, \ar[dr] & \dots, & k-1 \ar[dr]\\
    0,& 1, &\dots & j-1, &j, & j+1, & \dots, & k}, \quad s_i^*: \xymatrix@=0.3cm{0, \ar[d] & 1, \ar[d] & \dots, & i-1, \ar[d] & i, \ar[dl] & \dots, & k \ar[dl]\\
    0,& 1, &\dots  &i, & \dots, & k-1}. 
\end{equation*}
More precisely, for $j\in [k]$ and $i\in [k-1]$
\begin{equation}
    d^*_j(v^I)=v^{\overset{j-1\to}{I}}, \quad s_i^* (v^I) = v^{\overset{i\leftarrow}{I}},
\end{equation}
where by convention $v^{\overset{i\leftarrow}{I}}=0$ when $i,i+1\in I$.
\end{lemma}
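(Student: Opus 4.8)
The plan is to unwind everything through the explicit description of the pair‑groupoid nerve $D_\bullet$ together with the splitting \eqref{eq:T-k} of the iterated tangent bundle and to verify directly that the induced coface/codegeneracy maps permute the $TM_I$-summands exactly as the arrow diagrams claim. First I would recall that $D_k = D^{[k]}$, so that $\Hom(D^{[k]},M) = T[1]^k M$ carries the splitting $\mathfrak S(T^k_\bullet)$ with $T^k_{-i}=\bigoplus_{I\subset[k-1],\,|I|=i}TM_I$. The key observation is that the bijection between $i$-element subsets $I\subset[k-1]$ and summands of $T^k_\bullet$ comes, via the repeated application of Theorem \ref{thm:splitgrad} in the Corollary, from choosing in the $m$-th tangent‑prolongation step the ``new'' $T[1]$-direction indexed by position $m-1$. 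Concretely, the $m$-th factor $D$ in $D^{[k]}$ (the one attached to the vertex $m-1$ of $[k-1]$) contributes the odd generator $\epsilon_{m-1}$, and the summand $TM_I$ corresponds to monomials $\epsilon_{i_1}\cdots\epsilon_{i_{|I|}}$ with $I=\{i_1<\dots<i_{|I|}\}$. I would make this bookkeeping precise once and for all, so that coface/codegeneracy maps become literally relabellings of subscripts.

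Next I would compute the two maps. For the coface map: the face map $d_j\colon D^{[k]}\to D^{[k-1]}$ of the pair groupoid is the map that duplicates (more precisely, the dual nerve face) — in nerve‑of‑pair‑groupoid terms $d_j$ deletes the $j$-th vertex, so on the cosimplicial side $d_j^*\colon T^{k-1}_\bullet\to T^{k}_\bullet$ is the inclusion that sends the tangent direction indexed by a vertex $\ell\in[k-2]$ to the vertex $\ell$ if $\ell<j$ and to $\ell+1$ if $\ell\ge j$; in subscript language this is exactly $I\mapsto \overset{j-1\to}{I}$, hence $d^*_j(v^I)=v^{\overset{j-1\to}{I}}$. (The shift by $j-1$ rather than $j$ is the usual off‑by‑one between vertices of $[k-1]$, labelled $0,\dots,k-1$, and the face index $j\in[k]$; I would spell this out with the diagram given in the statement.) For the codegeneracy map: $s_i\colon D^{[k-1]}\to D^{[k]}$ repeats the $i$-th vertex, so $s_i^*$ identifies the two tangent directions at positions $i$ and $i+1$ and shifts the rest down; on subscripts this pulls everything after $i$ one step to the left, i.e. $I\mapsto\overset{i\leftarrow}{I}$, with the convention from Notation \ref{not:index} that the result is zero when it fails to be strictly increasing, which happens precisely when $\{i,i+1\}\subset I$. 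This matches $s_i^*(v^I)=v^{\overset{i\leftarrow}{I}}$.

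Finally I would check consistency: the formulas $d^*_j$ and $s^*_i$ I have written down automatically satisfy the cosimplicial identities dual to \eqref{eq:face-degen}, since they were obtained by literally dualizing the simplicial structure of $D_\bullet$; but one should at least observe that they are well‑defined graded‑vector‑bundle maps (degree $-i$ part to degree $-i$ part, respectively to degree $-i$ or $0$), which is clear because $|\overset{j-1\to}{I}|=|I|$ and $|\overset{i\leftarrow}{I}|=|I|$ whenever the latter is nonzero.

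The main obstacle I expect is purely notational/combinatorial rather than conceptual: pinning down the precise correspondence between the order in which Theorem \ref{thm:splitgrad} is iterated (and hence which binomial‑coefficient copy of $T[i]M$ carries which subscript $I$) and the vertex labelling of $[k-1]$, so that the shift in $d^*_j$ lands on $j-1$ and not on $j$ or $j+1$, and so that the degeneracy collapses positions $i$ and $i+1$ and not $i-1$ and $i$. Once that dictionary is fixed — most cleanly by induction on $k$, using $\binom{k}{i}+\binom{k}{i-1}=\binom{k+1}{i}$ exactly as in the proof of the Corollary, and tracking at each step whether the newly split‑off direction is the ``first'' or ``last'' vertex — the two displayed formulas drop out by inspection, and the arrow diagrams in the statement are precisely the picture of this induction.
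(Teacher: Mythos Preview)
The paper does not give a proof of this lemma; it is stated and immediately followed by the next subsection, so the authors are treating it as a direct unwinding of the definitions. Your proposal is precisely that unwinding and is correct: track the odd generators $\epsilon_0,\dots,\epsilon_k$ of $C(D^{[k]})$, observe that the pair-groupoid face $d_j$ deletes the $j$-th factor (so $d_j^*(\epsilon_\ell)=\epsilon_\ell$ for $\ell<j$ and $\epsilon_{\ell+1}$ for $\ell\ge j$) while the degeneracy $s_i$ duplicates the $i$-th factor (so $s_i^*(\epsilon_\ell)=\epsilon_\ell$ for $\ell\le i$ and $\epsilon_{\ell-1}$ for $\ell\ge i+1$, forcing $\epsilon_i\epsilon_{i+1}\mapsto 0$), and then identify the summand $TM_I$ with the monomial $\prod_{\ell\in I}\epsilon_\ell$ via the inductive splitting of the Corollary. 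Your remark that the only real work is pinning down the dictionary between the iteration order in Theorem~\ref{thm:splitgrad} and the vertex labels, and that this is what fixes the shift $j-1$ rather than $j$, is exactly right; once that bookkeeping is done the formulas are immediate.

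One small slip: you write $\Hom(D^{[k]},M)=T[1]^kM$, but since $D^{[k]}=D^{k+1}$ this is $T[1]^{k+1}M$; the same off-by-one is arguably present in the paper's own statement of the lemma (compare the index sets in the arrow diagrams with the superscripts on $T^\bullet_\bullet$), so this does not affect your argument, but you should be explicit about which convention you are adopting when you write it up.
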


\subsection{Limits for N-manifolds}\label{sec:limit}

It is relatively complicated to describe limits explicitly in the category of graded manifolds. Notice that there are morphisms (see Remark \ref{rm:strict}) between graded manifolds which do not come from morphisms between graded vector bundles, but some sort of non-strict $L_\infty$-style maps. It is not clear how to describe a fiber product, for example, if one leg is such a non-strict map. However, pullbacks of graded vector bundles can be easily described if one leg is a submersion on the base. In fact, the underlying sets for limits of graded vector bundles can be easily explicitly described, as every graded vector bundle has an underlying set. Luckily, in the case of N-manifolds, which is the case we need in this article, 
the functor $\mathfrak S : \Z^{<0}\Vectbd\to \NMfd$ preserves limits. Therefore, when we have strict morphisms, it is easy to describe a limit of N-manifolds via the explicit formula in the world of $\Z^{<0}$-graded vector bundles.

We now set off to show this fact. For this, we introduce the notion of vector bundle comorphisms (\cite{higmac93}).

\begin{definition}
    Let $E\to M$ and $F\to N$ be vector bundles. A vector bundle comorphism $E\to F$ is a couple $(\phi, \Phi)$, where $\phi:M\to N$ is smooth and $\Phi:\phi^*F\to E$ is a vector bundle morphism. We will denote vector bundle comorphisms by diagrams:
    $$
    \xymatrix{E\ar[d]&F\ar[d]\ar@{-->}[l]\\M\ar[r]&N}
    $$
\end{definition}

\begin{example}\label{ep:morp-comorp}
A morphism of vector bundles  $E\to F$ over $M\to N$, induces a vector bundle comorphism $\xymatrix{E^*&F^*\ar@{-->}[l]}$. In fact, any vector bundle comorphism corresponds to a vector bundle morphism of the dual vector bundles.
\end{example}

\begin{observation}\label{ob:module}
Let $E\to M$ and $F\to N$ be two vector bundles, and $\phi: M\to N$ a smooth map. Then $\Gamma(F)$ is naturally a $C^\infty(N)$-module structure with point-wise multiplication. Moreover, $\Gamma(E)$ is also a $C^\infty(N)$-module, with the product induced by the point-wise multiplication: 
\[
g \cdot \xi := (\phi^*g) \xi, \quad \text{for}\;g\in C^\infty(N), \quad \xi \in \Gamma(E). 
\]
\end{observation}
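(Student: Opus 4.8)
The plan is to deduce both assertions from the single standard fact that, for any vector bundle $G\to P$, the space of sections $\Gamma(G)$ is a module over the $\R$-algebra $C^\infty(P)$ under the fibrewise product $(h\xi)(x):=h(x)\,\xi(x)$, with the module axioms verified pointwise in the $\R$-vector space $G_x$. Applying this to $G=F$, $P=N$ gives the first claim verbatim: $\Gamma(F)$ is a $C^\infty(N)$-module under pointwise multiplication.

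For the second claim I would first record the elementary observation that $\phi^*\colon C^\infty(N)\to C^\infty(M)$, $g\mapsto g\circ\phi$, is a morphism of unital $\R$-algebras, which requires nothing of $\phi$ beyond smoothness. Then the $C^\infty(N)$-module structure on $\Gamma(E)$ asserted in the statement is precisely the restriction of scalars of the $C^\infty(M)$-module $\Gamma(E)$ along $\phi^*$: the action $g\cdot\xi=(\phi^*g)\,\xi$ satisfies $(g_1g_2)\cdot\xi=\phi^*(g_1g_2)\,\xi=(\phi^*g_1)(\phi^*g_2)\,\xi=g_1\cdot(g_2\cdot\xi)$ and $1\cdot\xi=(\phi^*1)\,\xi=\xi$, while additivity in each argument is inherited from that of the $C^\infty(M)$-action. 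There is no genuine obstacle; the only point worth stressing — and the reason the observation is isolated here at all — is that it holds for an \emph{arbitrary} smooth $\phi$, with no submersion, immersion, or constant-rank hypothesis, so that the fibre-product constructions of Section~\ref{sec:limit} may feed it a leg of any regularity.
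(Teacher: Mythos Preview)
Your argument is correct: the first assertion is the standard $C^\infty(P)$-module structure on sections of a bundle over $P$, and the second is exactly restriction of scalars along the algebra homomorphism $\phi^*\colon C^\infty(N)\to C^\infty(M)$. The paper states this as an observation without proof, so there is nothing to compare against; your verification is the obvious one and is entirely adequate.
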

Then we have the following lemma: 

\begin{lemma}\label{lemma:point-wise}
Let $\Phi: \Gamma(F)\to \Gamma(E)$ be a $C^\infty(N)$-module morphism. Given $\sigma \in \Gamma(F)$ and $x\in M$, we claim that if $\sigma(\phi(x))=0$, then $\Phi(\sigma)(x)=0$. 
\end{lemma}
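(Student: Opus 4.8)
This is a relative form of the standard fact that a morphism between modules of sections of vector bundles is ``local'', the relative parameter being the smooth map $\phi\colon M\to N$. The plan is to localise $\sigma$ near the point $\phi(x)$ by a bump function coming from $N$ — bump functions on $M$ are not available, since $\Phi$ is only assumed $C^\infty(N)$-linear, not $C^\infty(M)$-linear — and then to evaluate $\Phi$ of the localised section at $x$ in two complementary ways.

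Concretely, I would set $y:=\phi(x)$, choose a neighbourhood $U\ni y$ together with a finite local frame $e_1,\dots,e_r$ of $E$ over $U$ (this is where the standing finite-rank hypothesis on the bundles enters), and write $\sigma=\sum_{i=1}^r g_i\,e_i$ over $U$ with $g_i\in C^\infty(U)$; the hypothesis $\sigma(\phi(x))=0$ forces $g_i(y)=0$ for every $i$. Next I would take $\chi\in C^\infty(N)$ which equals $1$ on a neighbourhood of $y$ and has $\operatorname{supp}\chi\subset U$. Then $\chi g_i$ extends by zero to an element of $C^\infty(N)$, $\chi e_i$ extends by zero to an element of $\Gamma(E)$, and one has the identity $\chi^2\sigma=\sum_{i=1}^r (\chi g_i)\cdot(\chi e_i)$ in $\Gamma(E)$, the dot denoting the $C^\infty(N)$-module multiplication.

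Applying $\Phi$ and using $C^\infty(N)$-linearity term by term gives $\Phi(\chi^2\sigma)=\sum_i (\chi g_i)\cdot\Phi(\chi e_i)$, and since $C^\infty(N)$ acts on $\Gamma(F)$ through $\phi$, the value of this at $x$ equals $\sum_i (\chi g_i)(\phi(x))\,\Phi(\chi e_i)(x)=\sum_i (\chi g_i)(y)\,\Phi(\chi e_i)(x)=0$, because $g_i(y)=0$. On the other hand, reading $\chi^2\sigma$ as $\chi^2$ acting on $\sigma$ and using $C^\infty(N)$-linearity once more gives $\Phi(\chi^2\sigma)=\chi^2\cdot\Phi(\sigma)$, whose value at $x$ equals $\chi(\phi(x))^2\,\Phi(\sigma)(x)=\chi(y)^2\,\Phi(\sigma)(x)=\Phi(\sigma)(x)$, since $\chi(y)=1$. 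Comparing the two computations yields $\Phi(\sigma)(x)=0$.

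I do not expect a genuine obstacle here; the only subtlety is the one flagged above, namely that the localisation must be carried out on $N$ — trying to separate points of $M$ lying over $y$ would be illegitimate because $\Phi$ need not be $C^\infty(M)$-linear, and in fact the naive ``pointwise on $M$'' strengthening of the lemma is false. The use of $\chi^2$ rather than $\chi$ is the customary device ensuring that $\chi g_i$ and $\chi e_i$ both extend smoothly by zero while the second evaluation still recovers the factor $\chi(y)^2=1$.
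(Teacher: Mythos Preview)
Your proof is correct and follows essentially the same approach as the paper's: write $\sigma$ in a local frame of the source bundle near $\phi(x)$, use $C^\infty(N)$-linearity to pull out the coefficient functions, and evaluate at $x$. The paper's proof is terser and simply asserts $\Phi(\sigma)=\sum_i \phi^*f_i\,\Phi(F^i)$ without explicitly globalising the local frame, whereas you supply the bump-function argument (the $\chi^2$ trick) that makes this step rigorous; this is the standard refinement and not a genuinely different route.
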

\begin{proof} As the statement is point-wise, we may take a local frame $F^i$ of $F$ and we set $\sigma = \sum_i f_i F^i$ in a neighborhood of $x$. Then 
\begin{equation}\label{eq:Phi-sigma}
    \Phi(\sigma)=\sum_i \phi^*f_i \Phi(F^i).
\end{equation}
If $\sigma(\phi(x))=0$, then $f_i(\phi(x))=0$. Thus, by \eqref{eq:Phi-sigma}, $\Phi(\sigma)(x)=0$. 
\end{proof}

\begin{cor}\label{cor:module-morp-comorp}
We use the same notation as in the last lemma. Then the set of $C^\infty(N)$-module morphisms $\hom_{C^\infty(N)}(\Gamma(F), \Gamma(E))$ is exactly the set of comorphisms $\xymatrix{E&F\ar@{-->}[l]}$. 
\end{cor}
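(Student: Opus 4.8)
The plan is to produce a natural bijection, for the fixed base map $\phi\colon M\to N$ of the preceding lemma, between $\hom_{C^\infty(N)}(\Gamma(F),\Gamma(E))$ and the set of comorphisms $\xymatrix{E & F\ar@{-->}[l]}$ with that base map, i.e. the vector bundle morphisms $\Psi\colon\phi^*F\to E$ over $\id_M$. The easy direction sends such a $\Psi$ to $\Phi_\Psi(\sigma):=\Psi\circ\phi^*\sigma$, where $\phi^*\sigma\in\Gamma(\phi^*F)$ is the pullback section $x\mapsto\sigma(\phi(x))$; since $\phi^*(g\sigma)=(\phi^*g)\,\phi^*\sigma$ and $\Psi_x$ is fibrewise linear, this is a morphism for the $C^\infty(N)$-module structures of Observation \ref{ob:module}, and no further work is needed here.

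For the reverse direction I would start from a $C^\infty(N)$-module morphism $\Phi\colon\Gamma(F)\to\Gamma(E)$ and use Lemma \ref{lemma:point-wise} applied to a difference $\sigma_1-\sigma_2$ to see that $\Phi(\sigma)(x)$ depends only on $\sigma(\phi(x))\in F_{\phi(x)}=(\phi^*F)_x$. Because every vector in $F_{\phi(x)}$ is the value at $\phi(x)$ of some global section of $F$ — obtained by multiplying a local frame with a bump function on $N$ — one may then define $\Psi_x\colon(\phi^*F)_x\to E_x$ by $\Psi_x(v):=\Phi(\sigma)(x)$ for any $\sigma$ with $\sigma(\phi(x))=v$. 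Fibrewise linearity of $\Psi_x$ will follow from the $\mathbb R$-linearity of $\Phi$, since constants lie in $C^\infty(N)$ and sums and scalar multiples of extending sections extend the corresponding combinations of fibre vectors.

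The two remaining points are that $\Psi=(\Psi_x)$ is smooth as a bundle map and that the two assignments are mutually inverse; the latter can be read straight off the formulas. For smoothness I would fix $x_0$, take a local frame $(F^i)$ of $F$ near $\phi(x_0)$ and a bump function $\chi$ on $N$ equal to $1$ near $\phi(x_0)$, and observe that on a neighbourhood $U$ of $x_0$ one has $\Psi(\phi^*F^i)=\Phi(\chi F^i)$, a smooth section of $E$; hence the matrix of $\Psi$ in the frames $\phi^*F^i$ and a local frame of $E$ is smooth. The step I expect to be the main obstacle is precisely this last one: each $\Psi_x$ was built from an \emph{arbitrary} extension of $v$, so one must supply a single family of extensions — the $\chi F^i$ — valid on an entire neighbourhood, which is where the partition-of-unity input (bump functions on $N$, pulled back along $\phi$) is really used; everything else is formal, given Lemma \ref{lemma:point-wise} and Observation \ref{ob:module}.
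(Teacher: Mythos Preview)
Your proof is correct and follows essentially the same approach as the paper: both use Lemma~\ref{lemma:point-wise} to show that $\Phi(\sigma)(x)$ depends only on $\sigma(\phi(x))$, and then define the comorphism pointwise by $\Psi_x(v)=\Phi(\sigma)(x)$ for any extending section $\sigma$. The paper's proof is terser, leaving the easy direction, fibrewise linearity, smoothness, and the inverse check to the reader under ``it is not hard to verify,'' whereas you spell these out carefully---in particular your smoothness argument via a bump-extended local frame is exactly the right way to fill that gap.
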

\begin{proof}
Given $\Phi\in \hom_{C^\infty(N)}(\Gamma(F), \Gamma(E))$, we define $\varphi: \xymatrix{E&F\ar@{-->}[l]}$ by
\begin{equation*}
    \varphi(w, x) := \Phi(\sigma)(x),  \qquad \text{for}\; x\in M, w\in F_{\phi(x)} 
\end{equation*} where $\sigma \in \Gamma(F)$ is a section such that $\sigma(\phi(x))=w$. By Lemma \ref{lemma:point-wise}, this is well defined. It is not hard to verify that such $\Phi$ and $\varphi$ one-to-one correspond to each other.  
\end{proof}

\begin{pro} \label{pro:S-limit} The functor $\mathfrak S : \Z^{<0}\Vectbd\to \NMfd$ preserves limits.
\end{pro}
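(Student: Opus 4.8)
The plan is to deduce the statement from a sharper fact: $\mathfrak S\colon\Z^{<0}\Vectbd\to\NMfd$ is a right adjoint, and hence preserves every limit that exists in $\Z^{<0}\Vectbd$ (in particular the fibre products along strict submersions and the equalizers that will be used in Section \ref{sec:key-A}). So the real work is to construct, for every N-manifold $\huaT$ over $T$, a $\Z^{<0}$-graded vector bundle $L(\huaT)\to T$ together with a bijection, natural in both variables,
\[
\hom_{\NMfd}\bigl(\huaT,\mathfrak S(V_\bullet)\bigr)\;\cong\;\hom_{\Z^{<0}\Vectbd}\bigl(L(\huaT),V_\bullet\bigr),\qquad V_\bullet\to M \text{ in }\Z^{<0}\Vectbd .
\]
Here $L(\huaT)$ is defined degreewise by $L(\huaT)_i:=\bigl(C(\huaT)_i\bigr)^{*}$ for $i<0$: since $\huaT$ is an N-manifold with a splitting $W_\bullet$ of finite rank, each graded component $C(\huaT)_i$ is the module of sections of a finite-rank vector bundle over $T$ (a subbundle of $S^{\ge0}W^*_\bullet$), and $C(\huaT)_0=C^\infty(T)$.

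First I would unwind a morphism $\Psi\colon\Gamma(S^{\ge0}V^*_\bullet)\to C(\huaT)$ in $\NMfd$. Because $V^*_\bullet$ is a vector bundle, $\Gamma(S^{\ge0}V^*_\bullet)$ is the free graded-commutative $C^\infty(M)$-algebra on the graded module $\Gamma(V^*_\bullet)$; therefore $\Psi$ is the same datum as (i) an $\R$-algebra map $\phi^{\sharp}\colon C^\infty(M)\to C(\huaT)_0=C^\infty(T)$, and (ii) a degree-preserving $C^\infty(M)$-module map $\Gamma(V^*_\bullet)\to C(\huaT)$, where $C^\infty(M)$ acts on $C(\huaT)$ through $\phi^{\sharp}$. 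By the standard identification of homomorphisms between smooth-function algebras with smooth maps \cite{Nestruev03}, (i) is a smooth map $\phi\colon T\to M$. For fixed $\phi$, datum (ii) splits over degrees into $C^\infty(M)$-module maps $\Gamma(V^*_i)\to C(\huaT)_i$ for $i<0$. Now Corollary \ref{cor:module-morp-comorp} (applied to $\phi$, using Lemma \ref{lemma:point-wise}) identifies each such map with a vector bundle comorphism covering $\phi$, and dualizing it (Example \ref{ep:morp-comorp}) produces a genuine fibrewise-linear bundle map $L(\huaT)_i=\bigl(C(\huaT)_i\bigr)^{*}\to V_i$ covering $\phi$. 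Assembling these over all $i$ together with the base map $\phi$ is exactly the data of a morphism $L(\huaT)\to V_\bullet$ in $\Z^{<0}\Vectbd$. All these identifications are plainly natural in $V_\bullet$ (and in $\huaT$), which establishes the adjunction, and with it the proposition.

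The step I expect to be the main obstacle is the passage from the abstract algebraic datum $\Psi$ to the two pieces of honest geometric data: that $\Psi$ is pinned down by the base map it induces together with its restriction to the linear generators, and that those restrictions are fibrewise-linear bundle maps rather than merely $C^\infty(M)$-module maps. The first point is where the finite rank of the $C(\huaT)_i$ and the ``smooth/geometrical algebra'' formalism of \cite{Nestruev03} are genuinely needed; the second is precisely the content of Lemma \ref{lemma:point-wise} and Corollary \ref{cor:module-morp-comorp}, which is why those lemmas were proved just beforehand. Once these inputs are in place, everything else is bookkeeping of gradings and checking naturality of the correspondence.
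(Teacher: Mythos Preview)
Your strategy is exactly the paper's: show that $\mathfrak S$ is a right adjoint and conclude. The paper names the left adjoint $\mathfrak F$ and sets $\mathfrak F(\huaM)=S^{\ge 0}(V_\bullet)$ (total grading) for a chosen splitting $V_\bullet$ of $\huaM$, and then verifies the adjunction bijection by the chain
\[
\hom_{C^\infty(N)\alg}\bigl(\Gamma(S^{\ge0}W^*_\bullet),\Gamma(S^{\ge0}V^*_\bullet)\bigr)=\cohom_{\alg}(S^{\ge0}W^*_\bullet,S^{\ge0}V^*_\bullet)=\cohom(W^*_\bullet,S^{\ge0}V^*_\bullet)=\hom(S^{\ge0}V_\bullet,W_\bullet),
\]
using precisely Corollary~\ref{cor:module-morp-comorp} and Example~\ref{ep:morp-comorp} at the same spots you do. Your ``intrinsic'' description of the left adjoint via the graded pieces of $C(\huaT)$ is the same functor: once you fix a splitting $W_\bullet$ of $\huaT$ you recover $\mathfrak F(\huaT)=S^{\ge0}(W_\bullet)$.

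There is, however, a grading slip you should fix. For an N-manifold, $C(\huaT)$ is concentrated in \emph{non-negative} degrees (the paper's convention is that dualizing negates degree, cf.\ Example~\ref{ep:T1M}: $V_{-1}=TM$ gives $\Gamma(T^*M)$ in degree $+1$). Hence your formula $L(\huaT)_i:=\bigl(C(\huaT)_i\bigr)^*$ for $i<0$ produces the zero bundle. What you want is $L(\huaT)_i:=\bigl(C(\huaT)_{-i}\bigr)^*$ for $i<0$, and correspondingly the degree-preserving module maps in your datum~(ii) are $\Gamma\bigl((V_i)^*\bigr)\to C(\huaT)_{-i}$, since sections of $(V_i)^*$ sit in degree $-i>0$. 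With this correction your $L$ coincides with the paper's $\mathfrak F$ on the nose, and the rest of your argument goes through unchanged.
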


\begin{proof}
This follows from the fact that $\mathfrak S$ is the right adjoint to a functor $\mathfrak F: \NMfd\to \Z^{<0}\Vectbd$.  We define $\mathfrak F$ on the subcategory of split N-manifolds, which by the N-graded version of the Batchelor-Gawedzki theorem \cite[Eq.~(2) Theorem 1]{BoPo13} is equivalent to the category of all N-manifolds. Let $\huaM$ be a split N-manifold which corresponds to  a negatively graded vector bundle $V_\bullet \to M$, that is $C(\mathcal M)\cong \Gamma(S^{\ge 0}(V_\bullet^*))$. We set $\mathfrak F(\mathcal M)= S^{\ge 0}(V_\bullet)$, with its total grading.  We take another negatively graded vector bundle $W_\bullet\to N$, then by definition, 
\begin{equation}
    \hom(\mathcal M,\mathfrak S(W_\bullet))= \hom_{C^\infty (N)\alg}(\Gamma(S^{\ge 0}(W^*_\bullet)), \Gamma(S^{\ge 0}(V^*_\bullet))). 
\end{equation}
Here we notice that similar to Observation \ref{ob:module}, $\Gamma(S^{\ge 0}(W^*_\bullet)$ and $\Gamma^{\ge 0}(S(V^*_\bullet))$ are both $C^\infty$-algebras with the symmetric product. Similarly, (co)morphisms of vector bundles can also extend to (co)morphisms of vector bundles with additional $\Real$-algebra structure fiber-wise, provided the (co)morphisms preserve the algebra structures. We denote the set of comorphisms by $\cohom(-, -)$, and the one preserving addtional algebra structures by $\cohom_{\alg}(-, -)$. Then Corollary \ref{cor:module-morp-comorp} and Example \ref{ep:morp-comorp} implies that
\begin{equation}
\begin{split}
   & \hom_{C^\infty (N)\alg}(\Gamma(S^{\ge 0}(W^*_\bullet)), \Gamma(S^{\ge 0}(V^*_\bullet)))= \cohom_{\alg}(S^{\ge 0}(W^*_\bullet), S^{\ge 0}(V^*_\bullet)) \\ = & \cohom(W^*_\bullet, S^{\ge 0}(V^*_\bullet))  =  \hom(S^{\ge 0}(V_\bullet), W_\bullet)= \hom(\mathfrak F(\mathcal M),W_\bullet).
\end{split}
\end{equation}

We thus have proven the adjunction formula $\hom(\mathcal M,\mathfrak S(W_\bullet))=\hom(\mathfrak F(\mathcal M),W_\bullet)$. 
\end{proof}

We would like to be able to separate out the highest degree generators from an iterated tangent bundle. I.e., we would like to have diagrams of graded manifolds of the sort:

$$
T[k]M \to T[1]^kM\to \frac{T[1]^kM}{T[k]M}
$$

After choosing splittings, this statement becomes evident: we can just include the highest degree (=k) $TM$ into the larger vector bundle and also realize the quotient as a strict morphism. In fact, the sequence is canonical and does not depend on the choice of a splitting, as follows from the following Lemma:

\begin{lemma} Let $\mathcal M$ be an $N$-manifold with corresponding $\Z^{<0}$-graded vector bundle $V_\bullet$. Then  the following algebras define graded manifolds:  
\begin{itemize}
	\item The smallest subalgebra of $C(\mathcal M)$ containing $C_0(\mathcal M) \oplus C_1(\mathcal M)\oplus...\oplus C_{k-1}(\mathcal M)$---we denote the corresponding N-manifold by $\mathcal M_{< k}$.
	\item The algebra $C(\mathcal M)/I$, where $I$ is the ideal of $C(\mathcal M)$  generated by $C_{1}(\mathcal M)\oplus...\oplus C_{k-1}(\mathcal M)$---we denote the corresponding N-manifold by $\mathcal M_{\geq k}$
\end{itemize}
There are natural maps $\mathcal M_{\geq k} \to \mathcal M \to \mathcal M_{<k}$. A splitting of $\mathcal M$ induces a splitting of $\mathcal M_{\geq k}$ and $\mathcal M_{<k}$, such that this sequence of maps becomes $V_{\geq k}\to V_\bullet \to V_{<k}$.
\end{lemma}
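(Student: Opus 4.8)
The plan is to reduce everything to an explicit computation after choosing a splitting, exploiting that both constructions are defined purely from the graded algebra $C(\mathcal M)$ together with its grading, hence are manifestly intrinsic; the only thing that needs proof is that the resulting algebras are of the form $\Gamma(S^{\ge 0}(W_\bullet^*))$ for suitable $\Z^{<0}$-graded bundles $W_\bullet$. So fix a splitting, i.e.\ an isomorphism $C(\mathcal M)\cong\Gamma(S^{\ge 0}(V_\bullet^*))$ with $V_\bullet$ concentrated in strictly negative degrees; write $V_{-i}$ for the component in degree $-i$, so that the linear generators $\Gamma(V_{-i}^*)$ sit in degree $i$ of $C(\mathcal M)$ and $C_0(\mathcal M)=C^\infty(M)$. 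Let $V_{<k}$ denote the truncation of $V_\bullet$ to degrees $-1,\dots,-(k-1)$ and $V_{\ge k}$ the truncation to degrees $\le -k$; both are still strictly negatively graded, so $\mathfrak S(V_{<k})$ and $\mathfrak S(V_{\ge k})$ are N-manifolds.

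First, for $\mathcal M_{<k}$, I would observe that any homogeneous element of $C_j(\mathcal M)$ with $1\le j\le k-1$ is a $C^\infty(M)$-linear combination of symmetric monomials $\xi_{i_1}\cdots\xi_{i_m}$ in the generators, with $\xi_{i_\ell}\in\Gamma(V_{-i_\ell}^*)$ and $\sum_\ell i_\ell=j$; since all $i_\ell\ge 1$ we get $i_\ell\le j\le k-1$, so every generator occurring has degree at most $k-1$. Hence the subalgebra generated by $C_0(\mathcal M)\oplus\dots\oplus C_{k-1}(\mathcal M)$ equals the subalgebra generated by $C^\infty(M)$ together with $\Gamma(V_{-1}^*),\dots,\Gamma(V_{-(k-1)}^*)$, which is exactly $\Gamma(S^{\ge 0}((V_{<k})^*))$. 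This exhibits $\mathcal M_{<k}$ as a graded manifold with splitting $V_{<k}$, and the algebra inclusion $C(\mathcal M_{<k})\hookrightarrow C(\mathcal M)$ is, in split form, $S^{\ge 0}$ of the dual of the bundle projection $V_\bullet\to V_{<k}$; that is, it is the morphism $\mathcal M\to\mathcal M_{<k}$ which becomes $V_\bullet\to V_{<k}$ after choosing splittings.

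Second, for $\mathcal M_{\ge k}$, the same analysis shows $C_1(\mathcal M)\oplus\dots\oplus C_{k-1}(\mathcal M)$ is contained in the subalgebra generated by $\Gamma(V_{-1}^*),\dots,\Gamma(V_{-(k-1)}^*)$ and conversely contains each $\Gamma(V_{-i}^*)$ for $1\le i\le k-1$; hence the ideal $I$ it generates is the ideal generated by $\Gamma(V_{-1}^*),\dots,\Gamma(V_{-(k-1)}^*)$. Decomposing the symmetric algebra over $C^\infty(M)$, $\Gamma(S^{\ge 0}(V_\bullet^*))\cong\Gamma(S^{\ge 0}((V_{<k})^*))\otimes_{C^\infty(M)}\Gamma(S^{\ge 0}((V_{\ge k})^*))$, and quotienting by the augmentation ideal of the first factor identifies $C(\mathcal M)/I$ with $\Gamma(S^{\ge 0}((V_{\ge k})^*))$. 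So $\mathcal M_{\ge k}$ is a graded manifold with splitting $V_{\ge k}$, and the quotient map $C(\mathcal M)\to C(\mathcal M)/I$ is the morphism $\mathcal M_{\ge k}\to\mathcal M$ which in split form is the bundle inclusion $V_{\ge k}\hookrightarrow V_\bullet$. Composing, we get $\mathcal M_{\ge k}\to\mathcal M\to\mathcal M_{<k}$, and by construction a splitting $V_\bullet$ of $\mathcal M$ yields the splittings $V_{\ge k}$, $V_{<k}$ for which this is a sub-bundle followed by the complementary quotient $V_{\ge k}\to V_\bullet\to V_{<k}$.

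I do not expect a serious obstacle. The two places needing a little care are: (i) replacing ``generated as a subalgebra (resp.\ ideal) by $C_0\oplus\dots\oplus C_{k-1}$ (resp.\ $C_1\oplus\dots\oplus C_{k-1}$)'' by ``generated by the linear parts $\Gamma(V_{-i}^*)$'', which is exactly where positivity of the $V_\bullet$-grading enters; and (ii) the $C^\infty(M)$-bilinear splitting of the symmetric algebra used in the quotient computation. One should also note explicitly that the subalgebra and the ideal $I$ depend only on $(C(\mathcal M),\text{grading})$, so that $\mathcal M_{<k}$, $\mathcal M_{\ge k}$ and the two natural maps are well defined before any splitting is chosen, even though the identifications with $\mathfrak S(V_{<k})$ and $\mathfrak S(V_{\ge k})$ use one.
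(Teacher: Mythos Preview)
Your proposal is correct and follows essentially the same approach the paper has in mind: the paper's proof is a one-line pointer stating that this is ``a direct computation, which is implicitly carried out when proving the $N$-graded version of the Batchelor--Gawedzki theorem,'' and what you have written is precisely that direct computation spelled out in detail. Your careful check that generators of $C_j(\mathcal M)$ for $j\le k-1$ involve only $\Gamma(V_{-i}^*)$ with $i\le k-1$ (using positivity of the grading) and your splitting of the symmetric algebra for the quotient are exactly the points underlying the terse reference.
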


 \begin{proof}
 The proof is a direct computation, which is implicitly carried out when proving the $N$-graded version of the Batchelor-Gawedzki theorem. \cite[Theorem 1]{BoPo13} 
 \end{proof}

Our object of interest $T[k]M$ is naturally split, since it has generators in only one degree. So any distribution $E\subset TM$ induces a graded submanifold $E[k]\to T[k]M$ and the above sequence can be changed to:  
\begin{equation} \label{eq:Ek-split}
    E[k] \to T[1]^kM\to \frac{T[1]^kM}{E[k]}
\end{equation}
Moreover, the associated sequence of graded vector bundles over $M$ splits (non-canonically), so $T[1]^k M\cong \frac{T[1]^kM}{E[k]}\times_M E[k]$.

\subsection{Compatible connections for $\Z$-graded vector bundles}
Given a morphism of $\Z$-graded vector bundles $\Phi:E_\bullet\to F_\bullet$ over manifolds $M$ and $N$, we would like the induced morphism $T[1]\mathfrak S(E_\bullet)\to T[1]\mathfrak S(F_\bullet)$ to also be realized as a morphism of $\Z$-graded vector bundles. I.e., we want to pick connections on $E_\bullet$ and $F_\bullet$ such that $\mathfrak T[1]\mathfrak S(\Phi)$ preserves the corresponding splittings of $T[1]\mathfrak S(E_\bullet)$ and $T[1]\mathfrak S(F_\bullet)$. For this, we introduce the notion of  compatible connections of vector bundle comorphisms.

Vector bundle comorphisms have one important property: They induce morphisms on the level of sections:  Let $\Phi:\xymatrix{E&F\ar@{-->}[l]}$ be a comorphism over $\phi:M\to N$. Then any section $\sigma \in \Gamma(F)$ induces a section $\Phi\circ \phi^*\sigma\in \Gamma(E)$. Recall
that jet bundles are quotients of sections. 
Then a comorphism  $\xymatrix{E&F\ar@{-->}[l]}$ induces the following comorphism
\begin{equation} \label{eq:j1-comor}
    \xymatrix{J^1 \Phi:  J^1E&J^1F\ar@{-->}[l]}, \quad \text{by} \; j^1 \sigma \mapsto j^1(\Phi\circ \phi^*\sigma), 
\end{equation}

The following lemma uses standard techniques (cf. \cite{Sau89}, Chapter 4.2, where the statement is proven for invertible morphisms rather than general comorphisms). 
\begin{lemma}
We use the notion as in the above text. Then the following diagram commutes: 
$$
\xymatrix{T^*M\otimes E\ar[r]& J^1E\ar[r]& E\\
T^*N\otimes F\ar[r]\ar@{-->}[u]& J^1F\ar[r]\ar@{-->}[u]& F\ar@{-->}[u]}
$$
\end{lemma}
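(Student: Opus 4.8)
The plan is to turn the mixed diagram of comorphisms and honest bundle morphisms into a diagram of honest vector bundle morphisms over the single base $M$, and then to check commutativity by a short local computation. First I would pull everything back along $\phi\colon M\to N$. By definition a vector bundle comorphism over $\phi$ with data $\Phi\colon\phi^*F\to E$ is the same as an honest vector bundle morphism (over $\id_M$) out of the $\phi$-pullback; since pullback of vector bundles is exact, the bottom row $0\to T^*N\otimes F\to J^1F\to F\to 0$ pulls back to the exact sequence $0\to\phi^*(T^*N\otimes F)\to\phi^*J^1F\to\phi^*F\to 0$ over $M$. Under this dictionary the three dashed vertical arrows become: $\Phi\colon\phi^*F\to E$ (the given comorphism); $J^1\Phi\colon\phi^*J^1F\to J^1E$, which on jets of pulled-back sections is $j^1_x\sigma\mapsto j^1_x(\Phi\circ\phi^*\sigma)$ as in \eqref{eq:j1-comor}; and $d\phi^*\otimes\Phi\colon\phi^*T^*N\otimes\phi^*F\to T^*M\otimes E$, where $d\phi^*\colon\phi^*T^*N\to T^*M$ is the codifferential of $\phi$. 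It then remains to show that this diagram of morphisms over $M$ commutes, square by square.

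For the right-hand square I would just apply the projection $J^1E\to E$ to $J^1\Phi(j^1_x\sigma)=j^1_x(\Phi\circ\phi^*\sigma)$; the result is $(\Phi\circ\phi^*\sigma)(x)=\Phi(\sigma(\phi(x)))$, which is exactly $\Phi$ applied to the image $\sigma(\phi(x))$ of $j^1_x\sigma$ under the projection $\phi^*J^1F\to\phi^*F$. Hence the right square commutes; this is really just the definition of $J^1\Phi$ unwound.

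For the left-hand square I would argue on local generators. The inclusion $T^*N\otimes F\hookrightarrow J^1F$ sends a local generator $df\otimes\sigma$, with $f\in C^\infty(N)$ and $\sigma\in\Gamma(F)$, to $j^1(f\sigma)-f\,j^1\sigma$. Chasing $\phi^*(df\otimes\sigma)$ along the top row and using that $\Phi$ is fibrewise $C^\infty(M)$-linear — so that $\Phi\circ\phi^*(f\sigma)=(\phi^*f)\cdot(\Phi\circ\phi^*\sigma)$ — I get, with $\tau:=\Phi\circ\phi^*\sigma\in\Gamma(E)$, the element $j^1_x\big((\phi^*f)\tau\big)-(\phi^*f)(x)\,j^1_x\tau\in J^1E$, which by the same Leibniz formula is the image of $d(\phi^*f)_x\otimes\tau(x)$ under $T^*M\otimes E\hookrightarrow J^1E$. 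Chasing the same element along the bottom row, $d\phi^*\otimes\Phi$ sends it to $\big(d\phi^*\,df\big)_x\otimes\Phi(\sigma(\phi(x)))=d(\phi^*f)_x\otimes\tau(x)$, using the standard identity $d\phi^*\circ df=d(\phi^*f)$; including this into $J^1E$ gives the same element. Since the $df\otimes\sigma$ span $\Gamma(T^*N\otimes F)$ over $C^\infty(N)$ and everything in sight is $C^\infty$-linear in the appropriate sense, the left square commutes as well.

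The one genuinely delicate point is the well-definedness of $J^1\Phi$ on pulled-back jets, i.e.\ that $j^1_x(\Phi\circ\phi^*\sigma)$ depends only on $j^1_{\phi(x)}\sigma$; but this is precisely the content of \eqref{eq:j1-comor}, which we may assume, so I do not expect any real obstacle here — everything else is bookkeeping with the codifferential and the Leibniz description of the jet exact sequence. As an alternative route I could dualize via Example \ref{ep:morp-comorp}, replacing the whole picture by a diagram of honest morphisms of the dual bundles and invoking the classical functoriality of first jets under vector bundle morphisms (cf.\ \cite{Sau89}, Chapter 4.2); the local computation is essentially identical.
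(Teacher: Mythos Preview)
Your proposal is correct and follows the same overall strategy as the paper: pull the diagram back along $\phi$ to a diagram of honest bundle maps over $M$, then verify the two squares separately, with the right square being immediate from the definition of $J^1\Phi$.

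The difference lies in how you handle the left square. The paper works in explicit local coordinates $(x^p)$ on $M$, $(y^q)$ on $N$, and local frames $(E^j)$, $(F^i)$, writes out $j^1(\Phi\circ\phi^*\sigma)$ in coordinates, and then checks the claim on elements of the form $(x,0,\sum_{i,q}u_{i,q}\,dy^q\otimes F^i)\in\phi^*(T^*N\otimes F)$, verifying by hand that the $\partial_p\Phi^i_j\cdot\alpha_i$ terms drop out and what remains is exactly $(d\phi^*\otimes\Phi)$ applied to the input. Your argument instead uses the coordinate-free description of the inclusion $T^*N\otimes F\hookrightarrow J^1F$ via the Leibniz identity $df\otimes\sigma\mapsto j^1(f\sigma)-f\,j^1\sigma$, and chases generators of this form through both compositions. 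This is shorter and more conceptual, and the spanning claim you invoke is harmless since the check is pointwise and local. The paper's explicit computation has the minor advantage of giving a concrete coordinate formula for the map $\phi^*J^1F\to J^1E$ (which is implicitly reused when discussing compatible connections), but for the purpose of this lemma your route is cleaner. One cosmetic point: your labels ``top row'' and ``bottom row'' for the two compositions are slightly confusing given the orientation of the diagram, but the computations themselves are unambiguous and correct.
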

\begin{proof}
By definition of a comorphism, we need to use $\phi^*$ to pull back the sequence of vector bundles over $N$ to a sequence of vector bundles over $M$, i.e., the above lemma actually means, that the following diagram of vector bundles over $M$ commutes:
$$
\xymatrix{T^*M\otimes E\ar[r]& J^1E\ar[r]& E\\
\phi^*(T^*N\otimes F)\ar[r]\ar[u]&\phi^*(J^1F)\ar[r]\ar[u]& \phi^*F\ar[u]}
$$

Recall that the natural map $J^1E \to E$ is defined as $(j^1 \sigma)_x \mapsto \sigma_x$ for a section $\sigma \in \Gamma(E) $ and point $x\in M$.  The right square of the diagram commutes by the definition of $J^1 \Phi$ (see Eq.~\eqref{eq:j1-comor}). 

For the left square, let us  describe the proof with the help of local coordinates $y^1,...,y^n$ on $N$ and a local frame $F^j$ on $F$. Then at point $\bar{y}\in N$, the vector bundle morphism 
\begin{equation*}
    (T^*N\otimes F) \to J^1 (F), \quad \text{is defined by}\; \sum_{i,j} a_{i,j}dy^i|_{\bar y} \otimes F^j|_{\bar y} \mapsto (j^1\sigma)_{\bar y}, 
\end{equation*}

with section $\sigma\in \Gamma(F)$  given by $\sigma(y)=\sum_{i,j}a_{i,j}(y^i-\bar y^i)F^j$. \\

The map $\phi^*(J^1F)\to J^1E$ is also best understood in local coordinates. Let us take $x^1,...,x^m$ local coordinates on $M$ and $E^1,...,E^k$ a local frame of $E$. Then a section $\xi$ of $E$ is locally given by $x\mapsto\xi(x)= (x, \sum_{j}\beta_j(x)E^j)$. The first {jet} also contains the first derivatives: 
$$x\mapsto j^1\xi(x)=(x,\sum_{j}\beta_j(x)E^j, \sum_{i,j}\partial_{x^i}\beta_j(x)dx^i\otimes E^j).$$ 
Analogously a section $\sigma$ of $F$ is $y\mapsto \sigma(y)=(y,\sum_{j}\alpha_j(y)F^j)$ and its jet is $$y\mapsto j^1\sigma(y)=(y,\sum_{j}\alpha_j(y)F^j, \sum_{i,j}\partial_{y^i}\alpha_j(y)dy^i\otimes F^j),$$ where $y^1, \dots, y^n$ are local coordinates on $N$ and $F^1, \dots, F^l$ are a local frame of $F$. Then a pullback section $\phi^* \sigma \in \Gamma(\phi^* F)$ is $x\mapsto \phi^* \sigma(x)=(x,\sum_{j}\alpha_j(\phi(x))F^j)$ and its jet is $$x\mapsto j^1 \phi^* \sigma(x)=(x,\sum_{j}\alpha_j(\phi(x))F^j, \sum_{i,j}\partial_{x^i}\alpha_j(\phi(x))dx^i\otimes F^j).$$ 
The expression $\Phi\circ \phi^*\sigma \in \Gamma(E)$ takes the form,
$$
x\mapsto \Phi\circ \phi^*\sigma(x)=(x, {\sum_{i,j}\Phi_{j}^i(x)\alpha_i(\phi(x))E^j}),
$$
where in local coordinates, $\phi=(\phi^1,...,\phi^n)$ is a collections of maps depending on $x^1,...,x^m$, and $\Phi=\sum_{i,j}\Phi_{j}^i(x)F_i^*\otimes E^j$ is a matrix depending on a point in $M$, with $F_i^*$'s the dual frame of $F^i$'s. 
We now can calculate $ j^1(\Phi\circ \phi^*\sigma)$:
$$ j^1(\Phi\circ \phi^*\sigma)(x) = \left(x, {\sum_{i,j}\Phi_{j}^i(x)\alpha_i(\phi(x))E^j}, 
{\sum_{i,j,p}\left(
\partial_p\Phi_{j}^i(x)\alpha_i(\phi(x))+
\sum_q\Phi_{j}^i(x) (\partial_{y^q}\alpha_i)(\partial_{x^p}\phi^q(x))
\right) dx^p\otimes E^j
}
\right).
$$
In particular, we know that if we denote the derivative coordinates in $J^1F$ by $u_{i,q}$,  that is, $u_{i, q}= \partial_{y^q} \alpha_i(y)$, the map $\phi^*(J^1F)\to J^1E$ becomes:
$$
(x,\sum_i \alpha_iF^i, \sum_{i,q}u_{i,q}dy^q\otimes F^i) \mapsto  \left(x, {\sum_{i,j}\Phi_{j}^i(x)\alpha_iE^j}, 
{\sum_{i,j,p}\left(
\partial_p\Phi_{j}^i(x)\alpha_i+
\sum_q\Phi_{j}^i(x) u_{i,q}\partial_{x^p}\phi^q(x)
\right) dx^p\otimes E^j
}
\right).
$$ 
In  particular for elements of the form $(x,  0, dy^q\otimes u_{i,q} F_i) \in \phi^*(T^*N \otimes F)\subset  \phi^*J^1F$,  since $\alpha_i(x)=0$ for these elements, we have
$$
\sum_{i,j,p}\left(
\partial_{x^p}\Phi_{j}^i(x)\alpha_i+
\sum_q\Phi_{j}^i(x) u_{i,q}\partial_{x^p} \phi^q (x)
\right) dx^p\otimes E^j$$ $$= \sum_{i,j,p}
\sum_q\Phi_{j}^i(x) u_{i,q}\partial_p\phi^q(x)
dx^p\otimes E^j= ( \phi^* \otimes \Phi) \left(\sum_{i,q}dy^q\otimes u_{i,q} F^i\right),
$$
which is exactly in the image of $\phi^*(T^*N \otimes F) \to T^*M \otimes E$. Thus, the left square commutes.   
\end{proof}

\begin{definition}Let $\xymatrix{E & \ar@{<--}[l] F}$ be a vector bundle comorphism over $M\to N$ and $s:E\to J^1E$ and $s':F\to J^1F$ be connections. We say that $s,s'$ are compatible if
the following diagram commutes
$$\xymatrix{
J^1E&\ar[l]_s E\\ J^1F\ar@{-->}[u]&\ar[l]_{s'}F\ar@{-->}[u].
}$$ This definition generalizes easily to the graded case by taking levelwise compatible connections.
\end{definition}

\begin{example}
Let $\phi:M\to N$ be smooth and $F\to N$ a vector bundle. Then we have the following diagram of vector bundle morphisms:
$$
\xymatrix{\phi^*(T^*N\otimes F)\ar[r]\ar[d]& \phi^*J^1F\ar[r]\ar[d]& \phi^*F\\
T^*M\otimes \phi^*F\ar[r]&J^1(\phi^*F)\ar[r]& \phi^*F\ar@{=}[u]}
$$
A section $s:F\to J^1F$ induces a section $\phi^*s:\phi^*F\to \phi^*J^1F$, which in turn induces a unique connection $s':\phi^*F\to J^1\phi^*F$, which is compatible with $s$. Abusively we denote this \emph{pullback connection} $s'$ by $\phi^*s$ and note that this coincides with the classical definition of pullback connection. 
\end{example}

\begin{example}
Given a connection $s$ on $F\to N$, there is a unique connection $s^*$ on $F^*\to N$ such that the induced connection on $F\otimes F^*$ is compatible with the trivial connection on $ N\times \mathbb R$ via the trace comorphism $\xymatrix{N\times \mathbb R&F\otimes F^*\ar@{-->}[l]}$. One can verify that the dualization is compatible with pullbacks, i.e., $\phi^*(s^*)=(\phi^*s)^*$ as connections on $\phi^*F^*$ (cf. \cite{tuchar}). 
\end{example}

The reason why we are interested in compatible connections is the following result, which may be viewed as the functorial version of Theorem \ref{thm:splitgrad}. 
\begin{thm}
Let $E_\bullet \xrightarrow{L} F_\bullet$ be a morphism of $\Z$-graded vector bundles over $M\xrightarrow{\phi} N$ and $s,s'$ compatible connections on the corresponding comorphism between $E_\bullet^* , F_\bullet^*$. Then the morphism of graded manifolds $T[1]\mathfrak S(E_\bullet)\xrightarrow{T[1] \mathfrak S (L)} T[1]\mathfrak S(F_\bullet)$ respects the splittings induced by $s,s'$ given in Theorem \ref{thm:splitgrad}.
\end{thm}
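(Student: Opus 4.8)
The plan is to evaluate both sides on an arbitrary $\Z$-graded manifold $\huaT$ with body $T$, write out the encodings furnished by Theorem~\ref{thm:splitgrad}, and identify the comparison map with the strict morphism
\[
 L\oplus T[1]\phi\oplus L[1]\colon E_\bullet\oplus T[1]M\oplus E[1]_\bullet\;\longrightarrow\;F_\bullet\oplus T[1]N\oplus F[1]_\bullet ,
\]
where $T[1]\phi$ is $T\phi\colon TM\to TN$ placed in degree $-1$ and $L[1]$ is the shift of $L$. Set $\huaM=\mathfrak S(E_\bullet)$, $\huaN=\mathfrak S(F_\bullet)$, and let $\ell\colon\Gamma(F^*_\bullet)\to\Gamma(E^*_\bullet)$ be the section map of the comorphism dual to $L$ (Example~\ref{ep:morp-comorp}), so that $\mathfrak S(L)$ corresponds on function algebras to $S^{\ge 0}(\ell)\colon C(\huaN)\to C(\huaM)$, and $\ell$ restricts on the degree-zero part $C^\infty(N)$ to $\phi^*$. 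A $\huaT$-point of $T[1]\huaM$ is an algebra morphism $\Psi\colon C(\huaM)\to\R[\epsilon]\otimes C(\huaT)$, decomposed $\Psi=\Psi_0+\epsilon\,\Psi_1$ as in the proof of Theorem~\ref{thm:splitgrad}; post-composing with $\mathfrak S(L)$ yields the $\huaT$-point $\Psi'=\Psi\circ S^{\ge 0}(\ell)$ of $T[1]\huaN$, so $\Psi'_0=\Psi_0\circ S^{\ge 0}(\ell)$ and $\Psi'_1=\Psi_1\circ S^{\ge 0}(\ell)$. Since $\Psi'$ is again an admissible test morphism, Theorem~\ref{thm:splitgrad} applied to $\huaN$ produces its encoding via $s'$ (and in particular takes care of the geometricity of $C(\huaT)$ as a $C^\infty(N)$-module), so it remains to match, term by term, that encoding with the one of $\Psi$ via $s$.

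The body component is $\Psi'_0=\Psi_0\circ S^{\ge 0}(\ell)$, i.e.\ precisely $\mathfrak S(L)$-post-composition on the body morphism $\huaT\to\huaM$, which is the $L$-component; no connection is used. For the $T[1]M$-component: restricting to $C^\infty(N)$ gives $\Psi'_1|_{C^\infty(N)}=\Psi_1\circ\phi^*$, a derivation over $\Psi'_0$, and the associated $C^\infty(N)$-linear map $\Gamma(T^*N)\to C_1(\huaT)$ sends $dg\mapsto\Psi_1(\phi^*g)=J^1_{T^*}\Psi_1\bigl(d(\phi^*g)\bigr)$, i.e.\ it is $J^1_{T^*}\Psi_1$ precomposed with the section-level comorphism $\Gamma(T^*N)\to\Gamma(T^*M)$ dual to $T\phi$. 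This is the chain rule, and it is exactly the $T[1]\phi$-component; again no connection is used.

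The $E[1]_\bullet$-component is the heart of the argument and is where compatibility of $s,s'$ is needed. Fix $k$ and $\sigma\in\Gamma(F^*_k)$. By the naturality of $j^1$ along comorphisms (the lemma preceding the definition of compatible connections together with \eqref{eq:j1-comor}), the induced comorphism $J^1\ell$ between $J^1E^*_\bullet$ and $J^1F^*_\bullet$ satisfies $J^1\ell\circ j^1=j^1\circ\ell$ on sections; hence $\Psi'_1(\sigma)=\Psi_1(\ell\,\sigma)=J^1\Psi_1\bigl(j^1(\ell\,\sigma)\bigr)=J^1\Psi_1\bigl(J^1\ell\,(j^1\sigma)\bigr)$, and the uniqueness clause of \cite[Theorem 14.25]{Nestruev03} forces $J^1\Psi'_1=J^1\Psi_1\circ J^1\ell$ as $C^\infty(N)$-module maps. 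Compatibility of $s$ and $s'$ for the comorphism dual to $L$ says exactly that $J^1\ell\circ s'=s\circ\ell$ on sections, so $J^1\ell\circ s'_k$ lands in the image of $s_k$ and
\[
 J^1\Psi'_1\circ s'_k\;=\;J^1\Psi_1\circ J^1\ell\circ s'_k\;=\;J^1\Psi_1\circ s_k\circ\ell_k ,
\]
which depends only on the $E[1]_\bullet$-datum $J^1\Psi_1\circ s_k$ of $\Psi$ and is precisely the action of $L[1]$ through $\ell_k$. Note that compatibility is exactly what makes this work: without it, $J^1\ell\circ s'_k$ would also meet the subbundle $T^*M\otimes E^*_k\subset J^1E^*_k$, on which $J^1\Psi_1$ is governed by $\Psi_0$ and $J^1_{T^*}\Psi_1$, and the comparison map would acquire off-diagonal terms. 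With compatibility in hand, each of the three components of $\Psi'$ depends only on the corresponding component of $\Psi$, so the comparison map is block-diagonal, hence strict, proving the theorem. I expect the main obstacle to be precisely the identification $J^1\Psi'_1=J^1\Psi_1\circ J^1\ell$: one must verify that the first-order operator $\Psi'_1$ over $N$---known a priori only through its action on sections---is represented by this particular module map out of $J^1F^*_\bullet$, which is where the naturality of $j^1$ under comorphisms and the uniqueness in Nestruev's representation of differential operators do the work; everything downstream of that identity is formal.
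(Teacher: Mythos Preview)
Your proof is correct and follows essentially the same approach as the paper's: both evaluate on $\huaT$-points, decompose $\Psi=\Psi_0+\epsilon\Psi_1$, observe that post-composition with $\mathfrak S(L)$ sends this to $\Psi_0\circ L^*+\epsilon\,\Psi_1\circ L^*$, and then use the compatibility $J^1L^*\circ s'=s\circ L^*$ to conclude that $J^1\Psi'_1\circ s'_k=J^1\Psi_1\circ s_k\circ L^*$. Your write-up is somewhat more explicit than the paper's in two places---you spell out the $T[1]\phi$-component via the chain rule rather than deferring it to an ``analogous computation'', and you justify the identity $J^1\Psi'_1=J^1\Psi_1\circ J^1\ell$ by appealing to the naturality $J^1\ell\circ j^1=j^1\circ\ell$ and the uniqueness clause in Nestruev's representation theorem, whereas the paper simply asserts that the diagram on sections ``induces a diagram of jet bundles''---but the underlying argument is the same.
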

\begin{proof}
The theorem amounts to proving the following commutative diagram:
\begin{equation*}
    \xymatrix{T[1]\mathfrak{S}(E_\bullet) \ar[r]^{T[1]\mathfrak S (L) } \ar[d]^{sp_{s'}} \mathfrak{S}(L) & T[1] \mathfrak S (F_\bullet) \ar[d]^{sp_{s'}}\\ 
    E_\bullet \oplus T[1]M \oplus  E[1]_{\bullet} \ar[r]^{L\oplus T[1]\phi \oplus L[1]}& F_\bullet \oplus T[1]M \oplus  F[1]_{\bullet}},
\end{equation*}
where $sp_{s}, sp_{s'}$ denotes the splittings induced by $s, s'$ respectively in Theorem \ref{thm:splitgrad}. 
Then the proof amounts to checking what happens with a $\Z$-graded vector bundle morphism when following the construction of Theorem \ref{thm:splitgrad}.  We look at $T[1]\mathfrak S (L)$ on $\mathcal T$-points for a test $\Z$-graded manifold $\huaT$, i.e., the map,
$$
\hom(\Gamma(S^\bullet(E^*_\bullet)), C(\mathcal T)\otimes \mathbb R[\epsilon]) \to 
\hom(\Gamma(S^\bullet(F^*_\bullet)), C(\mathcal T)\otimes \mathbb R[\epsilon]) .
$$
Following the notation in Theorem \ref{thm:splitgrad}, an element on the left can be described as $\Psi_0 + \epsilon\Psi_{1}$. The image of such an element is given by $\Psi_0\circ L^* + \epsilon\cdot \Psi_{1}\circ L^*=:\Phi_0+\epsilon \cdot \Phi_{1}$, where $L^*$ denotes the map  $L^*:\Gamma(S^\bullet(F^*_\bullet))\to \Gamma(S^\bullet(E^*_\bullet))$ induced by (the symmetric powers of the dual of) $L$. \\

The fact that $\Phi_0=\Psi_0\circ L^*$ immediately implies that the component of $T[1]\mathfrak S(L)$ on the $E_\bullet\to F_\bullet$ part is just precomposition with $L^*$, therefore given by $L$ in the graded vector bundle picture. Now we have to apply the jet construction of Theorem \ref{thm:splitgrad} to $\Phi_{1}=\Psi_{1}\circ L^*$. The restriction of $\Phi_{1}$ to $\Gamma(F_\bullet^*)$ can be described as follows:
$$
\Gamma(F_\bullet^*)\xrightarrow{L^*} \Gamma(E_\bullet^*)\xrightarrow{\Psi_{1}} C(\mathcal T)
$$
This diagram induces a diagram of jet bundles:
$$
\Gamma(J^1F_\bullet^*)\overset{J^1L^*}{\to} \Gamma(J^1E_\bullet^*)\overset{J^1{\Psi_{1}}}{\to} C(\mathcal T).
$$
We are now interested in the composition $J^1\Phi_{1}\circ s' = J^1\Psi_{1}\circ J^1L^*\circ s'= J^1\Psi_{1}\circ s\circ L^*$, where the last equality follows from the compatibility of the connections. In particular, the component of $T[1]\mathfrak S(L)$ on the $E[1]_{\bullet}\to F[1]_{\bullet}$ part is just composition with $L^*$, thus is given by $L[1]$ in the graded vector bundle picture. With an  analogous computation,   the component of  $T[1]\mathfrak S(L)$ on the $T[1]M\to T[1]N$ part coincides with the tangent map $T[1]\phi: T[1]M\to T[1]N$ (without a choice of connection).
\end{proof}

A particular case where compatible connections can always be found, is the case of fiberwise surjective vector bundle morphisms.

\begin{lemma}
Let $E\to F$ be a fiberwise surjective vector bundle morphism and $s$ a connection on $F^*$. Then there exists a compatible connection on $E^*$.
\end{lemma}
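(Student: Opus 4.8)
The plan is to reduce the statement to the problem of extending a connection from a subbundle to the ambient bundle, which can always be solved by picking a complement. First I would dualize: write $L\colon E\to F$ for the given fiberwise surjective morphism, over $\phi\colon M\to N$. Its dual is the comorphism $\xymatrix{E^*&F^*\ar@{-->}[l]}$, i.e.\ the vector bundle morphism $\Phi\colon\phi^*F^*\to E^*$ over $\id_M$ which fiberwise is $(L_x)^*\colon F^*_{\phi(x)}\to E^*_x$. Since $L$ is fiberwise surjective, each $(L_x)^*$ is injective, so $\Phi$ is a fiberwise injective morphism of vector bundles over $M$; hence its image is a subbundle of $E^*$ and $\Phi$ identifies $\phi^*F^*$ with it. I write $\iota\colon\phi^*F^*\hookrightarrow E^*$ for the resulting inclusion, regarded as a comorphism over $\id_M$.

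Next I would peel off the base map $\phi$. By the Example on pullback connections above, the given connection $s$ on $F^*$ induces a pullback connection $\phi^*s$ on $\phi^*F^*$ which is compatible with $s$ along the tautological comorphism $\xymatrix{\phi^*F^*&F^*\ar@{-->}[l]}$ over $\phi$. Comorphisms compose, $J^1$ is functorial on comorphisms, and compatibility of connections is therefore stable under composition (one pastes the two defining squares on top of each other); moreover the comorphism $\xymatrix{E^*&F^*\ar@{-->}[l]}$ dual to $L$ factors as $\iota$ followed by this tautological comorphism. Hence it is enough to produce a connection $s_{E^*}$ on $E^*$ that is compatible with $\phi^*s$ along the subbundle inclusion $\iota$; any such $s_{E^*}$ will then be compatible with $s$ along the dual of $L$.

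Finally I would construct $s_{E^*}$ by hand. Using that $M$ is a manifold, so partitions of unity exist, I choose a complementary subbundle $W\subset E^*$, fixing an identification $E^*\cong\phi^*F^*\oplus W$, and pick any connection $s_W$ on $W$. Since first jets are local, they respect direct sums: $J^1E^*\cong J^1(\phi^*F^*)\oplus J^1W$, and under this identification the jet sequence $0\to T^*M\otimes E^*\to J^1E^*\to E^*\to 0$ is the direct sum of the jet sequences of $\phi^*F^*$ and of $W$; so $s_{E^*}:=\phi^*s\oplus s_W$ is again a splitting, i.e.\ a connection on $E^*$. By construction a section of $\phi^*F^*\subset E^*$ has vanishing $W$-component, hence vanishing $W$-derivatives too, so $s_{E^*}$ carries $\iota(\phi^*F^*)$ into $J^1(\phi^*F^*)\subset J^1E^*$ and restricts there to $\phi^*s$; concretely $J^1\iota\circ\phi^*s=s_{E^*}\circ\iota$, which is exactly the commutativity of the compatibility square for $\iota$.

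I expect the delicate point to be this last identity: it must be checked at the level of the first jet bundles, where it is transparent for the direct-sum connection but would not be obvious if one argued only with covariant derivatives, and it rests on the bookkeeping facts that $J^1$ applied to the subbundle comorphism $\xymatrix{E^*&\phi^*F^*\ar@{-->}[l]}$ over $\id_M$ is the inclusion $J^1(\phi^*F^*)\hookrightarrow J^1E^*$, and that compatibility of connections genuinely composes. Everything else is routine bundle linear algebra.
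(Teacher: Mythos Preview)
Your proposal is correct and follows essentially the same route as the paper: split $E^*\cong \phi^*F^*\oplus K$ using fiberwise injectivity of the dual, and take the direct-sum connection $\phi^*s\oplus s_K$. The paper's proof is a two-line sketch that suppresses exactly the points you spell out (the factorization through the pullback connection, the composability of compatibility, and the verification of the jet-level square for the inclusion), so your added detail is justified and accurate.
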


\begin{proof}
The map $\phi^*F^*\to E^*$ is injective, i.e., $E^*\cong \phi^*F^*\oplus K$ for some vector bundle $K$. Pick as the connection on $E^*$ the direct sum $s\oplus s^K$, where $s^K$ is any connection on $K$.
\end{proof}

\begin{cor}\label{cor:comp-conn}
Let $M\to N$ be a submersion. Then for any connection on $T^*N$, there exists a compatible connection on $T^*M$. In particular, the morphisms $T[1]^kM\to T[1]^kN$ can all be realized by strict morphisms (i.e., coming from morphisms of graded vector bundles).
\end{cor}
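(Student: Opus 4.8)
The plan is to read off both assertions from the two lemmas and the functorial version of Theorem~\ref{thm:splitgrad} that immediately precede this corollary. For the first assertion: if $\phi\colon M\to N$ is a submersion, its differential $T\phi\colon TM\to TN$ is a fiberwise surjective vector bundle morphism over $\phi$, so applying the lemma on fiberwise surjective morphisms with $E=TM$, $F=TN$ to the given connection on $T^*N$ produces a compatible connection on $T^*M$ relative to the comorphism between $T^*M$ and $T^*N$ induced by $T\phi$ (Example~\ref{ep:morp-comorp}). This step is essentially a one-line citation.

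For the ``in particular'' part I would induct on $k$, using that $T[1]^k(-)=T[1]\circ T[1]^{k-1}(-)$ on morphisms, and proving the slightly stronger statement that $T[1]^k\phi\colon T[1]^kM\to T[1]^kN$ is realized by a strict morphism whose underlying $\Z$-graded vector bundle morphism $L_k$ over $\phi$ is fiberwise surjective. The base case is $T[1]^1\phi=\mathfrak S(T\phi)$ with $T\phi$ placed in degree $-1$, which is fiberwise surjective because $\phi$ is a submersion. For the inductive step, assume $T[1]^{k-1}\phi$ is realized by a fiberwise surjective $L_{k-1}$ between the iterated tangent bundles \eqref{eq:T-k} over $M$ and over $N$. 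Dualizing gives a comorphism between their duals; choosing an arbitrary connection on the one over $N$ and invoking the fiberwise-surjective lemma levelwise yields a compatible connection on the one over $M$. The functorial version of Theorem~\ref{thm:splitgrad} then gives that $T[1]^k\phi=T[1]\mathfrak S(L_{k-1})$ respects the induced splittings, hence is strict, with realizing morphism $L_k=L_{k-1}\oplus T[1]\phi\oplus L_{k-1}[1]$.

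To close the induction I observe that $L_k$ is again fiberwise surjective over $\phi$, being the direct sum of $L_{k-1}$, $T\phi$ and $L_{k-1}[1]$, each of which is fiberwise surjective over $\phi$. I do not expect a genuine obstacle: the apparatus of comorphisms, compatible connections and the functorial tangent theorem was set up precisely for this conclusion. The one point that needs attention is carrying fiberwise surjectivity along the induction, since that is exactly what allows one to reapply the fiberwise-surjective lemma to produce compatible connections at the next stage; alternatively one may fix once and for all a single connection on $T^*M$ compatible with one on $T^*N$ and use the corresponding ``diagonal'' direct-sum connections at every stage, which is legitimate because each $L_k$ is a diagonal sum of copies of $T\phi$.
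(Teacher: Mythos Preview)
Your proposal is correct and matches what the paper intends: the corollary is stated without an explicit proof, and your argument---applying the fiberwise-surjective lemma to $T\phi$ for the first assertion, then inducting on $k$ via the functorial version of Theorem~\ref{thm:splitgrad} while tracking fiberwise surjectivity---is precisely the reasoning the surrounding lemmas were set up to enable. Your alternative of fixing a single compatible pair of connections on $T^*M$, $T^*N$ and using diagonal direct-sum connections throughout is in fact closer in spirit to how the paper uses the corollary later (cf.\ the phrasing ``a fixed connection $\nabla$ on $T^*M$'' in the earlier corollary of Theorem~\ref{thm:splitgrad}), but either route is fine.
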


\section{Representability of the tangent functor}

\begin{definition}\label{defi:T} \cite{severa:diff} The tangent functor $\mathcal T: \simp{\Mfd}\to \pSh{\Z\Mfd}$ from the category of simplicial manifolds $\simp{\Mfd}$ to that of presheaves of  $\mathbb Z$-graded manifolds $\pSh{\Z\Mfd}$ is given by, 
\begin{equation*}
  \huaT(X_\bullet):= \Hom( D_\bullet, X_\bullet): T\mapsto \hom(T\times D_\bullet, X_\bullet), \quad \forall X_\bullet \in \simp{\Mfd}, \forall T\in \Z\Mfd,
\end{equation*} on the level of objects, and by post-composition on the level of morphisms. 
Here $T$ denotes also the constant simplicial manifold with $T$ on each level and the $\hom$ is taken in the category $\simp{(\Z\Mfd)}$ of simplicial $\Z$-graded manifolds. 
\end{definition}
\begin{remark}
The space $\Hom(D_\bullet, X_\bullet)(T)$ consists of maps in $ \hom(T\times D_l, X_l)$ which are compatible with each other with respect to face and degeneracy morphisms.  
Therefore $\Hom(D_\bullet, X_\bullet)$ is the limit of the following diagrams
\[
\xymatrix{
\Hom(D_k, X_k) \ar[r]^{(d^D_J)^*} \ar[d]_{(s^X_I)_*} & \Hom(D_{k+1}, X_k)^{\times [k+1]} \\
\Hom(D_k, X_{k+1})^{\times [k]} & \Hom(D_{k+1}, X_{k+1})\ar[l]^{(s^D_I)^*} \ar[u]^{(d^X_J)_*}
}
\]
where $I=\{0, \dots, k\}$ and $J=\{0, \dots, k+1\}$. 
\end{remark}

The goal of this section is to prove the following central theorem.
\begin{thm}\label{thm:ker-p}
Let $X_\bullet$ be a Lie $n$-groupoid. The presheaf $\mathcal T(X_\bullet)$ is an $N$-manifold, that is, it is representable in $\NMfd$.
Moreover, it is (non-canonically) isomorphic to the graded manifold induced by
the tangent object, 
\begin{equation}\label{eq:tan-cx}
 \mathbb T(X_\bullet):= \ker Tp^1_0|_{X_0} [1] \oplus \ker Tp^2_0|_{X_0} [2]  \oplus \dots \oplus \ker Tp^n_0|_{X_0} [n], 
\end{equation}
where $p^k_0: X_k \to \Horn{k}{0}(X)$ is the horn projection. 
\end{thm}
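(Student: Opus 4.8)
The plan is to reduce the computation of the limit defining $\mathcal T(X_\bullet) = \Hom(D_\bullet, X_\bullet)$ to a manageable finite one by exploiting the Kan (horn-filling) conditions for the Lie $n$-groupoid $X_\bullet$, together with the cosimplicial structure on the iterated tangent bundles $T[1]^\bullet M$ worked out in Section~\ref{sec:iterated-tan}. First I would unwind the definition: a $T$-point of $\mathcal T(X_\bullet)$ is a compatible family of maps $\psi_k \in \hom(T\times D_k, X_k)$, and since $D_k = D^{[k]} = D^{\times(k+1)}$, each $\psi_k$ is a map out of a $(k+1)$-fold product of fat points into $X_k$, i.e.\ a point of $\Hom(D^{\times(k+1)}, X_k) = T[1]^{k+1}X_k$ in the sense of the corollary to Theorem~\ref{thm:splitgrad}. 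So the presheaf is the limit of a cosimplicial-type diagram whose $k$-th term is (built from) $T[1]^{k+1}X_k$, and I want to show this limit is representable by an N-manifold and identify it with \eqref{eq:tan-cx}.

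The key structural input is that $X_\bullet$ is $n$-coskeletal-like: for $l > n$ the horn projections $p^l_j$ are diffeomorphisms, and for all $l$ they are surjective submersions. I would first argue that the simplicial identities force $\psi_k$ to be determined by $\psi_0,\dots,\psi_n$ (using that $D_\bullet$ is the nerve of a pair groupoid, hence $2$-coskeletal, together with the $n$-groupoid condition on $X_\bullet$), so the limit is computed over the truncation $\Delta^{\leq n}$ and is a finite limit of N-manifolds. Then, crucially, one uses that the horn space $\Lambda[k,0](X) = X_{k,0}$ is itself a limit of copies of the $X_i$ for $i<k$ glued along faces, and that a map $D_\bullet \to X_\bullet$ restricted to the horn $\Lambda[k,0]$ (i.e.\ the degeneracy/face data below level $k$) leaves, at level $k$, exactly a choice lying in the fiber of $p^k_0$. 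Because we are differentiating — i.e.\ taking $\Hom(D,-)$ which by Theorem~\ref{thm:splitgrad} is $T[1]$ — the relevant object at level $k$ is not the fiber of $p^k_0$ itself but the kernel of its linearization $Tp^k_0$ restricted to degenerate points, which (after identifying degenerate simplices with $X_0$ via the iterated degeneracy $s_0\cdots s_0$) is precisely $\ker Tp^k_0|_{X_0}$. Assembling the degrees: the fat point $D = \mathbb R[-1]$ contributes a degree shift, and an element coming in "at simplicial level $k$ for the first time" (i.e.\ new relative to the horn) sits in degree $k$ after the $T[1]$-shifts stack up, giving the summand $\ker Tp^k_0|_{X_0}[k]$. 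I would make this precise by an induction on $k$ from $1$ to $n$: at each stage, split off (using Corollary~\ref{cor:comp-conn}, so that the relevant maps $T[1]^{k}X_k \to T[1]^k \Lambda[k,0](X)$ are strict, hence the limit can be computed in $\Z^{<0}\Vectbd$ by Proposition~\ref{pro:S-limit}) the "new" degree-$k$ directions as $\ker Tp^k_0$ over the degenerate locus, and feed the rest into the next stage. Proposition~\ref{pro:S-limit} guarantees the resulting limit of $\Z^{<0}$-graded vector bundles maps to an honest N-manifold under $\mathfrak S$, giving representability, while the explicit bookkeeping of which $TM$-factors survive (the combinatorics of Notation~\ref{not:index}, the binomial identities $\binom{k}{i}+\binom{k}{i-1}=\binom{k+1}{i}$, and which of the $\binom{k+1}{i}$ copies of $T[i]$ in $T[1]^{k+1}X_k$ are killed by the face/degeneracy compatibility) yields the precise direct sum \eqref{eq:tan-cx}.

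The main obstacle — and, judging from the introduction, the entire point of Section~\ref{sec:key-A} — is the combinatorial heart of this last bookkeeping step: showing that after imposing all the cosimplicial compatibility relations among the $\binom{k+1}{i}$-many $T[i]M$-summands across all levels $k \le n$ simultaneously, exactly one copy of $\ker Tp^k_0|_{X_0}$ in degree $k$ survives for each $k$, with everything else either cancelling or being forced to be degenerate. Naively the relations are an enormous over-determined linear-algebra system indexed by multi-indices and faces; the difficulty is organizing this so that one sees the surviving quotient is exactly a kernel of a horn projection and not something larger or smaller. I expect the proof to handle this by a careful order-of-elimination argument (process levels $k=1,2,\dots,n$ in turn, and within level $k$ process the multi-indices in a chosen order so that each compatibility equation solves for exactly one redundant variable), which is where the earlier gaps in \cite{cech:2016} and \cite{li:thesis} reportedly occurred; making the connections and splittings (Theorem~\ref{thm:splitgrad}, Corollary~\ref{cor:comp-conn}) globally coherent along the whole diagram — rather than chart-by-chart — is what makes the argument go through without a gap.
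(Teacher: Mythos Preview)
Your proposal is correct and follows essentially the same architecture as the paper: the paper introduces a tower of presheaves $H^k$ (Section~\ref{sec:H1-4}) with $\mathcal T(X_\bullet)=\lim H^k$, realizes $H^k$ as a pullback of $H^{k-1}$ against $T[1]^k X_k$ (diagram~\eqref{diag:hk}), and proves by induction that $H^k\cong H^{k-1}\times_{X_0}\ker Tp^k_0[k]|_{X_0}$ using compatible connections (Corollary~\ref{cor:comp-conn}), limits in $\Z^{<0}\Vectbd$ (Proposition~\ref{pro:S-limit}), and the combinatorial Lemma~\ref{lem:key}, whose proof is exactly the backward lexicographic elimination of multi-indices you anticipate. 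The one technical device you leave implicit and should make explicit is the paper's restriction of $f_k$ to $d_{1,\dots,k}^{-1}(\pt)=\pt\times D^k\subset D^{k+1}$, which is what replaces your $T[1]^{k+1}X_k$ by $T[1]^kX_k$ and makes the ``new'' part at level $k$ sit precisely in degree $k$ (namely $T[k]X_k$, whence $\ker Tp^k_0[k]$ after combining with the horn piece in Section~\ref{sec:A-B}).
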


The rest of this section is devoted to proving Theorem \ref{thm:ker-p}. We will first show that $\mathcal T(X)$ can be constructed as the limit of certain simpler presheaves $H^k$ (Lemma \ref{lim}). We then show that $H^k$ can be constructed as a certain pullback inductively from $H^{k-1}$ (Lemmas \ref{lem:case0}, \ref{lem:hkaspullb}). To show that $H^{k}$ is representable, we will need an intermediate space $A\times_C B$. We will first analyze its components $A$ and $B$ separately (Lemmas \ref{lem:key}, \ref{lem:B}), before looking at $A\times_C B$ altogether (Lemmas \ref{lem:ss}, \ref{lem:abc}) and concluding the induction (Lemma \ref{lem:ind-step}). This then implies the representability of $H^k$ asserted in Lemma \ref{lem:induction} and hence the representability of $\mathcal T(X)$. The rough proof structure is indicated in the below diagram.\\

\tikzstyle{process} = [rectangle, minimum width=3cm, minimum height=1cm, text centered, draw=black]
\tikzstyle{arrow} = [thick,->,>=stealth]
\begin{tikzpicture}[node distance=1.5cm]
\node (start)[process, align=center]{Lemma \ref{lim}: $\mathcal T$ as a limit of $H^k$};
\node (induction)[process, below of = start]{Lemma \ref{lem:induction}: Calculation of $H^k$ by induction};
\node (inductionstep)[process,below of = induction, xshift=-4cm]{Lemma \ref{lem:ind-step}: Induction step $H^{k-1}\mapsto H^{k}$};
\node (inductionstart)[process, right of = inductionstep, xshift=6cm]{Lemma \ref{lem:case0}: Initialization of induction, $H^0$};
\node (hkaspullback)[process,below of = inductionstep]{Lemma \ref{lem:hkaspullb}: $H^k$ as a pullback of $H^{k-1}$};
\node (intermediatespace)[process,right of = hkaspullback, xshift = 6.5cm]{Lemmas \ref{lem:ss},  \ref{lem:abc}:  Analysis of intermediate space $A\times_CB$};
\node (key)[process,below of = intermediatespace, xshift = -7cm]{Lemma \ref{lem:key} (key technique): Characterization of $A$};
\node (lemB)[process,right of = key, xshift = 7cm]{Lemma \ref{lem:B}: Characterization of $B$};

\draw [arrow] (induction) -- (start);
\draw [arrow] (inductionstart) -- (induction);
\draw [arrow] (inductionstep) -- (induction);
\draw [arrow] (hkaspullback) -- (inductionstep);
\draw [arrow] (intermediatespace) -- (inductionstep);
\draw [arrow] (key) -- (intermediatespace);
\draw [arrow] (lemB) -- (intermediatespace);
\end{tikzpicture}
Lemma~3.9 is the key technical input used to repair the combinatorial gap mentioned in the introduction.
\begin{remark}
 The non-canonical isomorphism in Theorem \ref{thm:ker-p} of the infinitesimal data of a Lie $n$-groupoid $X_\bullet$ to the tangent object \eqref{eq:tan-cx} depends on auxiliary choices: namely a choice of compatible connections on $TX_k$ and $T\Horn{k}{0}(X)$, together with the isomorphism \eqref{eq:hk-iso}, for $k=2, \dots, n$. The connections on different levels can be independent. We need these data even when $X_0$ is a point.
 
By contrast, the differentiation of Lie groups and Lie 2-groups to Lie algebras and Lie 
2-algebras is canonical and does not involve any choice of connection. Accordingly, in the pointed case, one expects the dependence on connections to disappear and the identification in Theorem~\ref{thm:ker-p} to admit a canonical form. This expectation is confirmed in recent work of Rogers~\cite{rogers:2025} for Lie $\infty$-groups.

For general base manifolds $X_0$ and $n\ge 2$, however, we expect connection data to remain unavoidable. Conceptually, passing to an 
N-manifold requires the choice of a splitting, and such a splitting depends on choices of splittings of certain short exact sequences at each level $\ge 2$; see \cite[Eq.~(2) in Theorem~1]{BoPo13}.  Our contribution is, at minimum, to make these choices explicitly, thereby rendering the resulting isomorphism in Theorem~\ref{thm:ker-p} concrete and geometric, even though it is not canonical. 
\end{remark}

We can now state the following Corollary:
\begin{cor} \label{cor:Q-structure}
The tangent object $\mathbb T(X_\bullet)$ of a Lie $n$-groupoid $X_\bullet$ carries the structure of a Lie $n$-algebroid. 
\end{cor}

\begin{proof}
Lie $n$-algebroid structures on $\mathbb T(X_\bullet)$ are in one-to-one correspondence with homological vector fields (i.e. degree one derivations on $\mathcal C(\mathfrak S(\mathbb T(X_\bullet)))$ squaring to zero) on the graded manifold $\mathfrak S(\mathbb T(X_\bullet))$. These in turn coincide with $D$-actions on $\mathfrak S(\mathbb T(X_\bullet))$ \cite[Lemma 9.1]{severa:diff}. Since Theorem \ref{thm:ker-p} provides an isomorphism between $\mathfrak S(\mathbb T(X_\bullet))$ and $\mathcal T(X_\bullet)$, we only need to provide a $D$-action on the presheaf $\mathcal T(X_\bullet)=\Hom(D_\bullet, X_\bullet)$. Such an action can be produced by observing that there is a natural inclusion of monoids
$$D\hookrightarrow T[1]D=\Hom(D,D)\hookrightarrow \Hom(D_\bullet,D_\bullet)$$
and using the natural action of $\Hom(D_\bullet,D_\bullet)$ on $\Hom(D_\bullet, X_\bullet)$ by precomposition.
\end{proof}

\subsection{Description of $\mathcal T(X_\bullet)$ as a limit} \label{sec:H1-4}

We introduce a sequence  of presheaves $H^k:\Z\Mfd\to \Set$:
\begin{align}
    H^k(T)=\Big\{(f_0,...,f_{k})~|~ &f_l\in \hom(T\times  D_l,X_l) \mathrm{~for~} l<k, \\  & f_k\in \hom(T\times d_{1,...,k}^{-1}(\pt), X_k)
     ~{such~ that~conditions~H1.-H4.~hold} \Big\}  \nonumber
\end{align}
Here $d_{1...k}:D_{k}\to D_{0}$ is the face map corresponding to the inclusion $[0]=\{0\}\to [k]=\{0,...,k\}$ and $\pt$ is the basepoint of $D_0=D$, and $\pt \to D$ corresponds to $\R[\epsilon]\to \R$. This gives us,  
\begin{equation} \label{eq:non-embed}
    d_{1,...,k}^{-1}(\pt)=\pt \times D^{\times k} \xrightarrow{\iota_k} D^{\times k+1} = D_k, 
\end{equation} which is in general not an ``inclusion'' in the usual sense.
\begin{enumerate}[label=\upshape(H\arabic*),ref=
H\arabic*]
    \item\label{itm:slower} $s_i^{X}f_l=f_{l+1}s_i^{ D}$ for $l\leq k-2$ and $i \in \{ 0,...,l \}$.
    \item\label{itm:dlower} $d_i^{X}f_{l+1}=f_{l}d_i^{ D}$ for $l\leq k-2$ and $i \in \{ 0,...,l+1 \}$.
    \item\label{itm:skk-1} $s_i^{X}f_{k-1}|_{d^{-1}_{1,...,k-1}(\pt)}=f_{k}s_i^{ D}|_{d^{-1}_{1,...,k-1}(\pt)}$ for $i \in 0,...,k-1$.
    \item\label{itm:dkk-1} $d_i^{X}f_{k}=f_{k-1}d_i^{ D}|_{d^{-1}_{1,...,k}(\pt)}$ for $i \in 0,...,k$.
\end{enumerate} 
Here $s^X_i$, $d^X_i$ denote the simplicial morphisms for $X_\bullet$, and $s^D_i$, $d^D_i$ denote those for $D_\bullet$. Moreover, $|_{d^{-1}_{1,...,k-1}(\pt)}$ and $|_{d^{-1}_{1,...,k}(\pt)}$ denote the natural precomposition with $\iota_{k-1}$ and $\iota_k$ respectively.

\begin{lem}\label{lim} Let $X_\bullet$ be any simplicial manifold and $H^k$ the presheaves defined above. Then $\mathcal T(X_\bullet)=\lim H^k$. 
\end{lem}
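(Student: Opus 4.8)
\textbf{Proof plan for Lemma \ref{lim}.}

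The plan is to identify $\mathcal T(X_\bullet) = \Hom(D_\bullet, X_\bullet)$, which by the remark following Definition \ref{defi:T} is the limit over $\Delta^{\mathrm{op}}$ of the presheaves $T \mapsto \hom(T \times D_l, X_l)$ with the coherence constraints imposed by all face and degeneracy maps, with the limit $\lim_k H^k$, where the limit on the right is taken along the evident restriction maps $H^{k} \to H^{k-1}$ (forgetting the top component $f_k$ and observing that an element of $H^k$ restricts to an element of $H^{k-1}$). The first thing I would check is that these restriction maps are well-defined: given $(f_0, \dots, f_k) \in H^k(T)$, the tuple $(f_0, \dots, f_{k-1})$ satisfies (H1)--(H2) for $l \le k-3$ trivially, and the top-level conditions (H3)--(H4) for the $(k-1)$-truncation follow from (H1)--(H2) of the $k$-tuple restricted to $d_{1,\dots,k-1}^{-1}(\pt)$, using the simplicial identities \eqref{eq:face-degen} to relate $d_i^D$, $s_i^D$ on $D_k$ with those on $D_{k-1}$ and the fact that $d_{1,\dots,k}^{-1}(\pt)$ sits over $d_{1,\dots,k-1}^{-1}(\pt)$ under the relevant faces.

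The core of the argument is the following: a point of $\Hom(D_\bullet, X_\bullet)(T)$ is a compatible family $(f_l)_{l \ge 0}$ with $f_l \in \hom(T \times D_l, X_l)$, and I claim this data is equivalent to the data of $\lim_k H^k(T)$. The nontrivial direction is to show that an element of $\lim_k H^k(T)$ — which a priori only records, at stage $k$, a genuine full map $f_l$ for $l < k$ together with the \emph{partial} map $f_k$ defined on $d_{1,\dots,k}^{-1}(\pt) = \pt \times D^{\times k}$ — actually assembles to a full compatible simplicial family. The key observation is that $D_\bullet$ is the nerve of the pair groupoid of $D$, so $D_k = D^{\times (k+1)}$, and $d_{1,\dots,k}^{-1}(\pt) = \pt \times D^{\times k}$ is an honest retract-like subobject cut out by setting the first coordinate to the basepoint. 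On the nerve of a pair groupoid every $k$-simplex is determined by its $k{+}1$ vertices, and here the relevant ``$0$th vertex map'' composed with $d_i^D$ recovers enough structure that, combined with (H4) $d_i^X f_k = f_{k-1} d_i^D|_{\dots}$ for all $i$, the partial map $f_k$ on $\pt \times D^{\times k}$ together with the already-reconstructed $f_{k-1}$ on all of $D_{k-1}$ forces a unique full map $D_k \to X_k$ — essentially because $D_k \to D_{k-1} \times_{D_{k-2}} \cdots$ (the iterated face description, or equivalently the product description $D^{\times(k+1)} = D \times D^{\times k}$) exhibits $D_k$ as glued from $D_{k-1}$ and the fiber $d_{1,\dots,k}^{-1}(\pt)$. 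Concretely I would argue by induction on $k$: having reconstructed a compatible family $(f_0, \dots, f_{k-1})$ on $D_0, \dots, D_{k-1}$, the data $f_k|_{d_{1,\dots,k}^{-1}(\pt)}$ from $H^k$, subject to (H3)--(H4), extends uniquely to $f_k$ on $D_k$ satisfying all of (H1)--(H2) at level $k-1$, hence continues the family; and conversely restriction of a genuine family lands in $H^k$. This is exactly the same bookkeeping as realizing a simplicial map out of a coskeletal-type presentation of $D_\bullet$.

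I expect the main obstacle to be \textbf{the uniqueness-and-existence of the extension} of $f_k$ from $\pt \times D^{\times k}$ to $D^{\times(k+1)}$ — i.e. making precise why recording only $f_k$ on the subobject $d_{1,\dots,k}^{-1}(\pt)$ loses no information. The clean way to handle it is to exploit that $d_{1,\dots,k}^{-1}(\pt) \xrightarrow{\iota_k} D_k$ together with the face map $d_{1,\dots,k} : D_k \to D_0$ and the fact that $D_\bullet$ is a nerve of a pair groupoid give $D_k \cong D_0 \times d_{1,\dots,k}^{-1}(\pt)$ as spaces (since $D^{\times(k+1)} = D \times D^{\times k}$), with the first factor $D_0$ controlled by $d_0 d_2 d_3 \cdots d_k = $ (the $0$th vertex), whose image under $f_k$ must equal $f_0$ composed with that vertex map by repeated application of (H4); and the second factor's behavior is exactly $f_k|_{\iota_k}$. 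Thus $f_k$ is determined, its existence as a \emph{smooth} map is immediate from this product decomposition and smoothness of $f_0$ and $f_k|_{\iota_k}$, and checking that the reconstructed $f_k$ satisfies the remaining simplicial identities (H1)--(H2) at the next stage reduces to the simplicial identities \eqref{eq:face-degen} on $D_\bullet$ and $X_\bullet$ plus the already-known compatibilities — a routine but somewhat lengthy verification that I would organize by separating the faces $d_0$ (which ``forgets the $0$th vertex'', mapping into the product factor) from $d_1, \dots, d_k$ (which are controlled by (H4)). Once this is in place, passing to the limit over $k$ gives $\lim_k H^k(T) \cong \Hom(D_\bullet, X_\bullet)(T)$ naturally in $T$, which is the claim.
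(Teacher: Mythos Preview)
Your proposal has a genuine gap at the extension step, and in fact you are working much harder than necessary.

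The claimed uniqueness-and-existence of an extension of $f_k$ from $\pt \times D^{\times k}$ to $D^{\times(k+1)}$ is false. Your product decomposition $D_k \cong D_0 \times (\pt \times D^{\times k})$ is correct as graded manifolds, but it does \emph{not} follow that a map $T\times D_k \to X_k$ is determined by its restriction to $T\times\pt\times D^{\times k}$ together with $f_0:T\times D_0\to X_0$: the two pieces land in different targets ($X_k$ versus $X_0$), and $X_k$ carries no product structure allowing you to combine them. Concretely, take $X_\bullet = NG$ for a Lie group $G$ and $k=1$: the face constraints $d_i^X \tilde f_1 = f_0\, d_i^D$ are vacuous (both $d_0^X,d_1^X:G\to\pt$ are trivial), so any $\tilde f_1:T\times D^2\to G$ restricting to the given $f_1|_{\pt\times D}$ works, and there is a whole $T[1]G$-worth of choices for the extra tangent direction. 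Thus the extension is neither unique nor forced by (H3)--(H4).

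The fix, which is exactly what the paper does, is to observe that no extension is needed: an element of $\lim_k H^k(T)$ is a compatible family $\big((f_0^{(l)},\dots,f_l^{(l)})\big)_{l\ge 0}$ with $(f_0^{(l)},\dots,f_l^{(l)})\in H^l(T)$, and compatibility with the restriction maps forces $f_a^{(i)}=f_a^{(j)}$ whenever $i,j\ge a+1$. In particular the \emph{full} map $f_k^{(k+1)}:T\times D_k\to X_k$ is already part of the stage-$(k{+}1)$ data. Setting $F_a := f_a^{(a+1)}$ gives the desired simplicial family, and the identities between $F_a$ and $F_{a+1}$ are exactly (H1)--(H2) for the element $(f_0^{(a+2)},\dots,f_{a+2}^{(a+2)})\in H^{a+2}(T)$. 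So the whole lemma is a two-line bookkeeping argument once you notice that the partial top map at stage $k$ becomes a full map at stage $k{+}1$; you never have to extend anything yourself.
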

\begin{proof}
There are natural maps 
$$H\to ... \to H^{k+1}\to H^k \to ....\to H^0$$
On $T$-points it is just given by restricting to a sub-collection of the $\{f_i\}$. So we have a canonical map $H\to \lim H^k$. In order to prove the equality, we have to show that there is an inverse map $\lim H^k\to H$. We define this map on $T$-points. \\

An element in $\lim H^k(T)$ is represented by a series of elements $(f^{(0)}_0)\in H^0(T)$,  $(f^{(0)}_0,f^{(0)}_1)\in H^1(T)$, ... , $(f^{(i)}_0,...,f^{(i)}_i)\in H^i(T)$,... 
By definition, all these maps have to be compatible with restrictions and simplicial identities whenever defined. In particular,  $f^{(i)}_a=f^{(j)}_a$ whenever $i,j\geq a+1$. We can hence define the simplical morphism $F\in \hom(T\times  D_\bullet, X_\bullet)$ by $F_{a}=f^{(a+1)}_a\in hom(T\times  D_a, X_a)$. The simplical identities for $F$ between levels $a$ and $a+1$ then follow from $F_a=f^{(a+1)}_a=f^{(a+2)}_a$, $F_{a+1}=f^{(a+2)}_{a+1}$ and the fact that $f^{(a+2)}_{a}, f^{(a+2)}_{a+1}$ satisfy all simplical identities.
\end{proof}

For $H^0$ all conditions are void and $d_{1, \dots, k}$ should be understood as $d_{\emptyset}:D\to D$, and is the identity. Therefore $d^{-1}_{\emptyset}(\pt)=\pt$. This means that:

\begin{lem}\label{lem:case0}
$H^0(T)=\hom(T,X_0)$. Thus, $H^0=X_0$ is representable. 
\end{lem}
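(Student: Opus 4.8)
The plan is to unwind the definition of $H^0(T)$ and observe that every clause in the definition of $H^k$ degenerates when $k=0$. First I would spell out the data of an element of $H^0(T)$: it is a single map $f_0 \in \hom(T \times D_0, X_0)$, where in the definition the ``last'' component $f_k = f_0$ is required to lie in $\hom(T \times d_\emptyset^{-1}(\pt), X_0)$. So the first step is to identify the domain correctly. Following the convention spelled out just above the statement, for $k=0$ the face map $d_{1,\dots,k}$ is the map $d_\emptyset \colon D \to D$ corresponding to the identity inclusion $[0] \to [0]$, hence $d_\emptyset = \id_D$ and $d_\emptyset^{-1}(\pt) = \pt$. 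Therefore the domain $T \times d_\emptyset^{-1}(\pt)$ is just $T \times \pt \cong T$, and an element of $H^0(T)$ is an element of $\hom(T, X_0)$ together with whatever constraints (H1)--(H4) impose in the case $k=0$.

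The second step is to check that conditions (H1)--(H4) are vacuous when $k=0$. Conditions (H1) and (H2) are quantified over $l \le k-2 = -2$, so the index set is empty and the conditions impose nothing. Conditions (H3) and (H4) involve $f_{k-1} = f_{-1}$, which does not exist; more precisely, (H3) ranges over $i \in \{0,\dots,k-1\} = \emptyset$ and (H4) over $i \in \{0,\dots,k\}$ but refers to the nonexistent datum $f_{k-1}$, so in the $k=0$ case these too impose no genuine constraint (there is simply no such datum to constrain, and the relevant index set in (H3) is empty). Hence the only data is $f_0 \in \hom(T, X_0)$ with no conditions, giving the bijection $H^0(T) \cong \hom(T, X_0)$, natural in $T$.

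The third and final step is to recognize the resulting presheaf. Under the Yoneda-type embedding $\Z\Mfd \to \pSh{\Z\Mfd}$ discussed around Eq.~\eqref{eq:Hom}, the presheaf $T \mapsto \hom(T, X_0)$ is by definition the presheaf represented by the ordinary manifold $X_0$ (viewed as a $\Z$-graded manifold concentrated in degree $0$). Thus $H^0 = X_0$ is representable, which is the claim. I do not anticipate any obstacle here: the entire content is bookkeeping around the edge-case conventions ($d_\emptyset = \id$, empty index ranges), and the only thing requiring a moment's care is confirming that $d_{1,\dots,k}^{-1}(\pt) = \pt$ rather than something larger when $k=0$, which is exactly the convention stated immediately before the lemma.
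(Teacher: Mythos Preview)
Your proposal is correct and follows exactly the paper's approach: the paper's entire argument is the sentence immediately preceding the lemma, which records that for $k=0$ all conditions (H1)--(H4) are void and that $d_{\emptyset}=\id_D$ so $d_{\emptyset}^{-1}(\pt)=\pt$. You have simply spelled this out in more detail, which is fine.
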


\subsection{The central pullback diagram} \label{sec:pullback-diag}
In this subsection, we will  build $H^k$ from $H^{k-1}$ via a pullback diagram.

\begin{lemma}\label{lem:hkaspullb}
$H^k(T)$ can be realized as the following pullback: 
\begin{equation} \label{diag:hk-combi}
    \xymatrixcolsep{15pc}\xymatrix{H^k(T) \ar[r]^{\huaF_k} \ar[d] & \hom(T\times \pt \times D^k, X_k)  \ar[d]_{(s^D_0)^*, \dots, (s^D_{k-1})^*,  (d_1^X)_*, \dots, (d_k^X)_*} \\
    H^{k-1}(T) \ar[r]^-{((s_0^X)_*, \dots, (s_{k-1}^X)_*, (d_1^D)^*, \dots, (d_k^D)^*) \circ \huaF_{k-1}} &  \hom(T\times \pt \times D^{k-1}, X_k)^{[k-1]}  \times \hom (T\times \pt \times D^k, X_{k-1})^{[k]\backslash 0}  }.
\end{equation}
Here, for $l\ge 1$,  $\huaF_{l}$ is the forgetful map that remembers only the information on the $l$-th level restricted on $\pt \times D^{l}$, that is 
\begin{equation}
    H^{l} \xrightarrow{\huaF_{l}} \Hom(\pt \times D^{l}, X_{l}). 
\end{equation}
\end{lemma}

\begin{proof}
We start by observing that $(f_0,...,f_k)\in H^k(T)$ is uniquely determined by $f_k$ and then determine the conditions on $f_k$ explicitly. \\

\begin{enumerate}[label=\upshape(Step\arabic*),ref=
Step\arabic*]
\item \label{step:determined-by-fk} {\bf An element in $(f_0,...,f_k)\in H^k(T)$ is uniquely determined by $f_k$.}\\
    
    For $l<k$ consider the iterated inclusion $I_{l, k}$ from $ D_l=D^{[l]}=D^{l+1}$ to $D_k$ by augmenting ``points'' from the left, i.e., $D^{l+1}\to {\pt}^{k-l}\times D^{l+1}\hookrightarrow D^{k+1}$. 
    We recall that $d^{0,\dots,k-l-1}: [l]\to [k]$ is a map forgetting $0, \dots, k-l-1$, that is, $i\mapsto k-l+i$ for $i=0, \dots, l$. 
    Thus, $d^D_{0, \dots, k-l-1} \circ I_{l, k} = \id$. Abusively, we write $I_{l, k}$ for $(id_T\times I_{l, k})$.
    The image of $I_{l, k}$ lies in $\pt \times D^k=d_{1...k}^{-1}(\pt)$, so
 $f_k\circ I_{l, k}$ is well-defined and we can deduce 
 \begin{align}\label{eq:lowerdef}f_l= f_l\circ d^D_{0,...,k-l-1}\circ  I_{l, k}=d^X_{0,...,k-l-1}\circ f_k\circ I_{l,k}.\end{align} Here we have used the fact that $f_i$'s are compatible with degeneracies, and one can verify that this equation holds even for the case $l=k-1$.  Hence, $f_l$ is uniquely determined by $f_k$ for $l=0, \dots, k-1$. \\

 \item \label{step:sd-eq-fk} {\bf Let $f^{(k-1)}=(f^{(k-1)}_{0}, ...,f^{(k-1)}_{k-1})\in H^{k-1}(T)$. An element $f_{k}\in \hom(T\times d_{1,...,k}^{-1}(\pt), X_k)$ generates an element of $H^k(T)$, which restricts to $f^{(k-1)}$ if and only if   \begin{itemize}
    \item[(sIk)] $s_i^{X}f_{k-1}^{(k-1)}=f_{k}s_i^{ D}|_{d^{-1}_{1,...,k-1}(\pt)}$ for $i \in I=\{ 0,...,k-1\}$.
    \item[(dJk)] $d_i^{X}f_{k}=f^{(k-1)}_{k-1}d_i^{ D}|_{d^{-1}_{1,...,k}(\pt)}$ for $i \in J=\{1,...,k\}$.
 \end{itemize}}
 
 That is, given an element $f_k \in \hom(T\times d_{1,...,k}^{-1}(\pt), X_k)$, it extends to an element $f=(f_0, \dots, f_k) \in H^k$ with property $f_i=f^{(k-1)}_i$ ($i=0, \dots, k-2$) and $f_{k-1}\iota_{k-1} = f^{(k-1)}_{k-1}$ for an $f^{(k-1)}=(f_0^{(k-1)}, \dots, f_0^{(k-1)}) \in H^{k-1}(T)$ if and only if (sIk) and (dJk) hold.

The necessity of the conditions is obvious because conditions \ref{itm:skk-1} and \ref{itm:dkk-1}
for $f$ imply (sIk), (dJk) respectively.  Notice that it is not a typo that $i\neq 0 $ in (dJk). 

For the sufficiency, we define $\{f_l\}_{l<k}$ using formula \eqref{eq:lowerdef} and obtain $f=(f_0, \dots, f_k)$. We need to show that $f$ satisfies \ref{itm:slower}, \ref{itm:dlower}, \ref{itm:skk-1}, \ref{itm:dkk-1}, 
and $f_{k-1}\iota_{k-1} = f^{(k-1)}_{k-1}$ for $f^{(k-1)}=(f_0^{(k-1)}, \dots, f_{k-1}^{(k-1)}) \in H^{k-1}(T)$. 

The $I_{l, k}$ we have used in the previous step can be rewritten as $I_{l, k}=\iota_k\circ \iota_{k-1}\circ .... \circ \iota_{l+1}$.
Notice that $\iota_{k}=s^D_0$, by \eqref{eq:lowerdef} and (s0k),  we calculate:
 $$f_{k-1}\iota_{k-1}=d^X_0f_{k} \iota_{k} \iota_{k-1}=d^X_0 f_k s^D_0 \iota_{k-1}=d^X_0s^X_0f_{k-1}^{(k-1)}\iota_{k-1}=f_{k-1}^{(k-1)}\iota_{k-1}. $$
As  $\iota_{k-1}:D^{k-1}\to d_{1,...,k-1}^{-1}(\pt)$ is an isomorphism, it follows that $f_{k-1}|_{d^{-1}_{1.,,,.k-1}(\pt)}=f_{k-1}^{(k-1)}$, i.e., $f_{k-1}\iota_{k-1}=f_{k-1}^{(k-1)}$.

Let us look at the analogue for $k-1$ of formula \eqref{eq:lowerdef}, stating that $f_{k-1}^{(k-1)}$ completely determines $f_i^{(k-1)}$ for $i\leq k-1$: $$f^{(k-1)}_l=d_{0,...,(k-1)-l-1}^X\circ f^{(k-1)}_{k-1}\circ I_{l,k-1}$$
Since the image of $ I_{l,k-1}$ lies in the image of $\iota_{k-1}$, this is equal to:
$$
=d_{0,...,(k-1)-l-1}^X\circ f_{k-1}\circ I_{l,k-1}=d_{0,...,(k-1)-l-1}^X\circ d_{0}^X\circ f_{k}\circ I_{1,k}\circ I_{l,k-1}=f_{l}
$$
In particular, this implies that all simplicial identities for $\{f_0,...,f_{k-2}\}$ hold.
Next we check the simplicial identities involving $f_{k-1}$ and $f_{k-2}$: \\
To see the relation $s^X_a f_{k-2}=f_{k-1}s^D_a$, for $a=0, \dots, k-2$,  we can apply formula \eqref{eq:lowerdef} to the relation between $f_{k-1}$ and $f_{k-2}$
$$s^X_a f_{k-2}=s^X_a d^X_0 f_{k-1} \iota_{k-1}=d^X_0s^X_{a+1}f_{k-1}\iota_{k-1}=d^X_0 f_k s^D_{a+1} \iota_{k-1}=d^X_0f_k \iota_k s^D_a=f_{k-1}s^D_a.$$
Similarly $d_a^Xf_{k-1}=f_{k-2}d^D_a$ for $a\in \{0,...,k-1\}$ can be verified by:
$$d^X_af_{k-1}=d^X_ad_0f_{k}I_k=d^X_0d_{a+1}^Xf_{k}I_k=d_0^Xf_{k-1}d^D_{a+1}I_k=d_0^Xf_{k-1}I_{k-1}d^D_a=f_{k-2}d^D_a.$$ 
Finally, almost all the identities between $f_k$ and $f_{k-1}$ follow directly from (sIk) and (dJk), for instance, for \ref{itm:dkk-1} ($i\neq 0$) we have
$$d_i^Xf_k\iota_k=f^{(k-1)}_{k-1}d_i^D\iota_k=f^{(k-1)}_{k-1}\iota_kd_{i-1}^D=f_{k-1}\iota_kd_{i-1}^D=f_{k-1}d_i^D\iota_k.$$
The only special identity is $d_0^{X}f_{k}=f_{k-1}d_0^{ D}|_{d^{-1}_{1,...,k}(\pt)}$, which we can verify by applying from the right $d_0^D$ to the equation defining $f_{k-1}$. 
$$
f_{k-1}\circ d_0^D=d_0^X\circ f_k\circ \iota_k\circ d_0^D
$$
The statement now follows from $\iota_k\circ d_0^D|_{\pt \times D^k}=id_{\pt \times D^k}$.
\end{enumerate}
\end{proof}

Since the presheaves on the right-hand side are representable, we could rewrite the diagram as:

\begin{equation} \label{diag:hk}
    \xymatrixcolsep{15pc}\xymatrix{H^k \ar[r]^{\huaF_k} \ar[d] & T[1]^{k}X_k  \ar[d]_{(s_I^D)^*, (d_J^X)_* } \\
    H^{k-1} \ar[r]^-{((s_I^X)_*, (d_J^D)^*) \circ \huaF_{k-1}} & (T[1]^{k-1}X_k)^{[k-1]}   \times  (T[1]^{k} X_{k-1})^{[k]\backslash 0}  },
\end{equation}

where $J=(1, \dots, k)$ and $I=(0, \dots, k-1)$, and we furthermore abbreviate the tuple $((d^X_1)_*, \dots, (d^X_k)_*) $ to $(d^X_J)_*$, and similarly for any other such tuple of maps. 

If $H^{k-1}$ is representable, showing the representability of $H^k$ is equivalent to showing that there is an N-manifold fitting into the pullback diagram \eqref{diag:hk}. More precisely, we want to inductively prove

\begin{lemma}\label{lem:induction}
For all $k\ge 0$,
 $H^k$  is representable by $\oplus_{i=1}^k \ker Tp^i_0[i]|_{X_0}$. 
\end{lemma}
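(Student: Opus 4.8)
The plan is to prove Lemma \ref{lem:induction} by induction on $k$, using the pullback diagram \eqref{diag:hk} as the inductive engine and Proposition \ref{pro:S-limit} (preservation of limits by $\mathfrak{S}$) to translate the pullback of N-manifolds into a pullback of $\Z^{<0}$-graded vector bundles. The base case $k=0$ (and $k=1$) is handled by Lemma \ref{lem:case0}: $H^0 = X_0$, and $H^1 = T[1]X_0 = \ker Tp^1_0|_{X_0}[1]$ since $\Lambda[1,0] = \Delta[0]$ so $p^1_0$ is a submersion whose kernel over $X_0$ is a subbundle of $TX_0$ --- in fact $H^1(T)=\hom(T\times D, X_1)$ with the compatibility $d_1 f_1 = f_0 \circ(\text{const})$, which cuts out exactly $\Hom(D, \ker Tp^1_0|_{X_0})$.

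For the inductive step, assume $H^{k-1}$ is representable by $\bigoplus_{i=1}^{k-1}\ker Tp^i_0[i]|_{X_0}$. First I would observe that the two legs of \eqref{diag:hk} on the right are strict morphisms of N-manifolds: the map $(s^D_I)^\ast, (d^X_J)_\ast : T[1]^k X_k \to (T[1]^{k-1}X_k)^{[k-1]} \times (T[1]^k X_{k-1})^{[k]\setminus 0}$ comes from maps of simplicial manifolds and of the cosimplicial structure on iterated tangent bundles, so by Corollary \ref{cor:comp-conn} (compatible connections exist because the horn projections, hence the relevant face maps, are submersions) it can be realized strictly after choosing compatible connections. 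The key geometric input is then that this leg, viewed as a map of graded vector bundles, is \emph{fiberwise surjective} onto its image in a controlled way, with kernel precisely $\ker Tp^k_0|_{X_k}[k]$ concentrated in the top degree. Concretely, the conditions (sIk) and (dJk) from the pullback description say that $f_k : T\times \pt\times D^k \to X_k$ must have all faces $d_1, \dots, d_k$ prescribed by $f^{(k-1)}$; this is exactly the condition that $f_k$ lands in a fiber of the horn projection $p^k_0 : X_k \to \Lambda[k,0](X)$ over a point determined by $H^{k-1}$, because $\Lambda[k,0]$ is built from the faces $d_1, \dots, d_k$ (the $0$-th face being omitted).

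So the strategy is: identify the pullback of \eqref{diag:hk} in $\Z^{<0}\Vectbd$ as the fiber product whose new contribution, beyond the pulled-back copy of $\bigoplus_{i=1}^{k-1}\ker Tp^i_0[i]|_{X_0}$, is the degree-$k$ piece $\ker Tp^k_0|_{X_k}[k]$; then invoke Proposition \ref{pro:S-limit} to conclude the same for the N-manifolds. The subtle point is the \emph{location} of the new kernel: the lemma claims it sits over $X_0$, i.e. $\ker Tp^k_0|_{X_0}[k]$, not over $X_k$. The resolution is that the iterated tangent bundle $T[1]^k X_k$ restricted along the inclusion of the relevant fiber, together with the constraint that the body map factor through $X_0$ (forced by $d_{1,\dots,k}^{-1}(\pt)$ and the body of $H^{k-1}$ being $X_0$), pins everything down to the fiber over $X_0$; degeneracy sections identify $X_0$-points with horn-filling data trivially, and the only freedom is the normal direction to the horn in $X_k$ along the degenerate locus, which is $\ker Tp^k_0|_{X_0}$. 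This bookkeeping --- tracking how the body collapses from $X_k$ to $X_0$ and how the degree-$k$ generators of $T[1]^k X_k$ reduce to a single copy of $\ker Tp^k_0|_{X_0}$ once the lower-degree constraints are imposed --- is where I expect the real work to be, and it is presumably why the proof is deferred to the combinatorial Section \ref{sec:key-A}: one must show the relevant map of graded vector bundles is fiberwise surjective with the stated kernel, which reduces to a statement about the combinatorics of horns and iterated faces/degeneracies of $D_\bullet$ (the index manipulations of Notation \ref{not:index}). A clean way to organize the surjectivity is to use the Kan condition: since $X_\bullet$ is a Lie $n$-groupoid, $p^k_0$ is a surjective submersion, so $Tp^k_0$ is fiberwise surjective, which feeds exactly into Lemma (fiberwise surjective morphisms admit compatible connections) and makes the pullback of graded vector bundles behave as expected.
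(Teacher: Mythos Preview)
Your overall induction strategy matches the paper's, but there is a genuine gap in the inductive step. You write that the right vertical map of \eqref{diag:hk} is ``fiberwise surjective onto its image in a controlled way, with kernel precisely $\ker Tp^k_0[k]$,'' and propose to compute the pullback directly in $\Z^{<0}\Vectbd$. But the codomain $(T[1]^{k-1}X_k)^{[k-1]}\times(T[1]^k X_{k-1})^{[k]\setminus 0}$ is large and the image is not directly tractable; crucially, you never explain why the \emph{bottom} map from $H^{k-1}$ factors through that image, without which the pullback cannot simplify. The paper's solution (Lemma \ref{lem:ind-step}) is to introduce an intermediate N-manifold $A\times_C B$ with $A=T[1]^kX_k/T[k]X_k$, $B=T[1]^k\Horn{k}{0}(X)$, $C=B/T[k]\Horn{k}{0}(X)$, prove $A\times_C B\cong T[1]^kX_k/\ker Tp^k_0[k]$ (Lemma \ref{lem:ss}), show that it sits as a monomorphism inside the large product, and then verify via the simplicial identities that the bottom map lifts to $A\times_C B$ (conditions (a),(b),(c) of Lemma \ref{lem:abc}). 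Only after this factorization does the big pullback collapse to $H^{k-1}\times_{A\times_C B}T[1]^kX_k$, and then base change gives $H^{k-1}\times_{X_0}\ker Tp^k_0[k]|_{X_0}$. Your sketch contains none of this intermediate machinery, and ``surjective onto its image'' is not by itself enough to compute a pullback.

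You also slightly mischaracterize the content of Section \ref{sec:key-A}: Lemma \ref{lem:key} is not a surjectivity statement but the combinatorial fact that $T[1]^kM/T[k]M$ is the \emph{limit} of the degeneracy diagrams \eqref{diag:A} into $(T[1]^{k-1}M)^{[k-1]}$; this is precisely what makes the characterization of $A\times_C B$ as a limit (Lemma \ref{lem:abc}) possible, and it is the step where earlier proof attempts had gaps. Finally, a minor slip in your base case: $H^1\neq T[1]X_0$. The horn projection $p^1_0=d_1:X_1\to X_0$ has $\ker Tp^1_0|_{X_0}\subset TX_1|_{X_0}$, not $TX_0$; Section \ref{sec:k=1} works this out explicitly.
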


In order to do so, we first have to get a better understanding of the image of $T[1]^{k}X_k $ in $(T[1]^{k} X_{k-1})^{[k]\backslash 0}  \times (T[1]^{k-1}X_k)^{[k-1]} $ (for notations see Section \ref{sec:iterated-tan}). We will first investigate the image of $(s^D_I)^*$, then $(d^X_J)_*$ and then combine them.  But before this, let us verify the initial cases for the induction. The case when $k=0$ is verified in Lemma \ref{lem:case0}. We now verify that $H^1=\ker T[1]p^1_0|_{X_0}$, and $\huaF_1$ is an embedding. 

\subsubsection{The case when $k=1$}\label{sec:k=1} 
When $k=1$, conditions \eqref{itm:slower} \eqref{itm:dlower} are empty, and \eqref{itm:skk-1} and \eqref{itm:dkk-1} are 
\begin{enumerate}
    \item[(H3)] $s_0^X f_0|_\pt = f_1 s^D_0|_\pt$;
    \item[(H4)] $d_j^X f_1 = f_0 d_j^D|_{\pt \times D}, \quad j=0, 1.$
\end{enumerate}
We have $$H^1(T) = \{ (f_0, f_1) | f_0 \in \hom(T\times D, X_0), f_1\in \hom(T\times \pt \times D, X_1), \;\text{such that H3 and H4 hold.\}}$$ By \eqref{step:determined-by-fk}, $(f_0, f_1)$ is uniquely determined by $f_1$. By \eqref{step:sd-eq-fk}, given a $T$-point $f^{(0)}_0 \in H^0(T) $,   $f_1 \in \hom(T\times \pt \times D, X_1)$ is anything satisfying 
\begin{equation} \label{eq:f0-f1}
    s^X_0 f^{(0)}_0 = f_1 s^D_0|_\pt, \quad d^X_1 f_1 = f^{(0)}_0 d^D_1|_{\pt \times D} = f^{(0)}_0(\pt) = {x_0}. 
\end{equation} As $H^0=X_0$ and $  \hom(T\times \pt \times D, X_1)= T[1]X_1(T)$ are both representable, the above equations are also satisfied by the points they correspond to.  That is, $f_0^{(0)}$ corresponds to a point $x_0 \in H^0=X_0$, and $f_1$ corresponds to a vector $v \in T[1]X_1$. The first equation in \eqref{eq:f0-f1} implies that $f_1|_{(\pt, \pt)}=s^X_0(x_0) $. This shows that $v \in T_{x_0} [1]X_1$. The second equation in \eqref{eq:f0-f1} shows that $Td^X_1 v = 0_{x_0}$. Thus, $H^1 = \ker T[1] p^1_0|_{X_0}$. 

In this case, $\huaF_1$ is an embedding. Noticing that $H^0=X_0\xrightarrow{\huaF_0=id} X_0$, the fact $H^0=X_0 \xrightarrow{(\delta_1, (s^X_0)_*)} T[1]X_0 \times X_1$ is an embedding implies that $\huaF_1$ is an embedding via the pullback diagram \eqref{diag:hk}. However, we should not expect that $\huaF_k$ in general to be a strict embedding. This will be addressed in a later work.

\subsection{Embedding  $T[1]^kM/T[k]M$ into $(T[1]^{k-1}M)^{[k-1]}$}\label{sec:key-A}

In this section we show that for any manifold $M$, the N-manifold $T[1]^kM/T[k]M$ can be seen as an N-submanifold of  $T[1]^{k-1}M^{[k-1]}$. More precisely, for a fixed $k$, consider following diagrams for all $0\leq i\leq j\leq k-2$:
\begin{equation}\label{diag:A}
    \xymatrix{
T[1]^kM/T[k]M\ar[d]^{\sigma_{j+1}} \ar[r]^{\sigma_i} & T[1]^{k-1}M\ar[d]^{\sigma_j
}\\
T[1]^{k-1}M\ar[r]^{\sigma_{i}} & T^{k-2}[1]M.
}
\end{equation}
Here $T[1]^l M$ is understood as $Hom(\star\times D^l, M)$ for $l=k, k-1, k-2$, and $\sigma_i$ corresponds to degeneracies of $D^{l}$ restricted to $\star \times D^{l-1}$ for $l=k, k-1$. Let us explain a bit: Indicated by diagram \eqref{diag:hk-combi}, we will later consider $Hom(\star\times D^l, M)$ with $\star\times D^l\subset D^{l+1}$, instead of directly $\Hom(D^l,M)$. The inclusion $D^l\cong \star \times D^l\subset  D^{l+1}$ corresponds to the map 
\begin{equation*}
    \mathbb R[\epsilon_0,...,\epsilon_{l}]=C(D^{l+1})\to C(\star \times D^l)=\mathbb R[\epsilon_1,...,\epsilon_{l}], \quad \epsilon_0 \mapsto 0, \epsilon_j \mapsto \epsilon_j, \text{for} \; j=1, \dots, l. 
\end{equation*}
Here $\epsilon_i$'s all have degree -1, thus they anti-commute. The map $\sigma_i$  corresponds to degeneracies of $D^{l}$ restricted to $\star \times D^{l-1}$ for $l=k, k-1$. As a consequence, we have one more $\sigma_i: T[1]^kM /T[k]M\to T[1]^{k-1}M$ than we would have simply took $T[1]^k M= \Hom(D^k, M)$ and $T[1]^{k-1} M =\Hom(D^{k-1}, M)$ and understood $\sigma_i$ comes from $(s^D_i)^*$ on the level $k-2$ in \eqref{eq:pair-D}. In fact,  the maps $\sigma_1,...,\sigma_{k-1}$ correspond to the usual degeneracies $(s^D_0)^*,...,(s^D_{k-2})^*$ if we take $T[1]^k M= \Hom(D^k, M) $ and $T[1]^{k-1} M =\Hom(D^{k-1}, M) $, and the map $\sigma_0$, corresponds to the inclusion $D^{k-1}\to \star \times D^{k-1}\to D^k$.\\

Now we prove the key Lemma of this subsection. It is also the key step in repairing the mistake in Li's Lemma 8.23.  The proof is a long and fine combinatorial demonstration. Thus, we break it down into three observations to make the complex proof easier to understand and follow. 

\begin{lemma}\label{lem:key}
The N-manifold $T[1]^kM/T[k]M$ is the limit of the above family of diagrams  \eqref{diag:A} in the category  $\NMfd$. Moreover, the natural map $(\sigma_0, \dots, \sigma_{k-1}): T[1]^kM/T[k]M \to (T[1]^{k-1}M)^{[k-1]}$ is a strict embedding with respect to splittings determined by a chosen connection on $TM$.
\end{lemma}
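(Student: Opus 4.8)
The plan is to work entirely with the explicit split model $T^k_\bullet$ from Eq.~\eqref{eq:T-k}, using the fact (Proposition~\ref{pro:S-limit}) that $\mathfrak S$ preserves limits, so that computing the limit of the diagrams \eqref{diag:A} in $\NMfd$ reduces to computing a limit of $\Z^{<0}$-graded vector bundles, which can be done componentwise, i.e. fiberwise over $M$. Concretely, after fixing a connection on $TM$ (hence on all the $T^*M$'s appearing and all pullbacks), the maps $\sigma_i$ become the strict morphisms $s_i^*$ of the Lemma in Section~\ref{sec:iterated-tan}, re-indexing copies of $TM$ via the combinatorics $v^I\mapsto v^{\overset{i\leftarrow}{I}}$, together with the one extra map $\sigma_0$ coming from the inclusion $D^{k-1}\hookrightarrow \star\times D^{k-1}\hookrightarrow D^k$, which on $T^k_\bullet$ is (up to reindexing) the projection killing all copies $TM_I$ with $0\in I$ — in particular it kills the top copy $T[k]M=TM_{\{0,\dots,k-1\}}$, which is exactly why we quotient by $T[k]M$ on the left.

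\textbf{First step.} I would set up the bookkeeping: identify $T[1]^kM/T[k]M$ with $\mathfrak S$ of the graded vector bundle $\widetilde T^k_\bullet$ obtained from $T^k_\bullet$ by deleting the summand $TM_{\{0,\dots,k-1\}}$, and identify $(T[1]^{k-1}M)^{[k-1]}$ with $\mathfrak S$ of $\bigoplus_{i=0}^{k-2}(T^{k-1}_\bullet)_{(i)}$, a direct sum of $k-1$ copies of $T^{k-1}_\bullet$ indexed by $i=0,\dots,k-2$. The $i$-th component of the map $(\sigma_0,\dots,\sigma_{k-1})$ is then $\sigma_i$; I would write down precisely, in terms of Notation~\ref{not:index}, what $\sigma_i$ does to a basis element $v^I$ of $\widetilde T^k_\bullet$, treating $\sigma_0$ (the inclusion-type map) and $\sigma_1,\dots,\sigma_{k-1}$ (the degeneracy-type maps $v^I\mapsto v^{\overset{(i-1)\leftarrow}{I}}$) on the same footing.

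\textbf{Second step (the heart).} The Lemma has two assertions and they are really the same computation at different indices. For the \emph{limit} statement I must show that the family of square-compatibility conditions in \eqref{diag:A}, namely $\sigma_i\circ(\text{leg})=\sigma_i\circ(\text{leg})$ for all $0\le i\le j\le k-2$, cuts out exactly $\widetilde T^k_\bullet$ inside $\bigoplus_i (T^{k-1}_\bullet)_{(i)}$; for the \emph{strict embedding} statement I must show the combined map is injective on each graded component. Both follow if I prove: \emph{for every $I\subsetneq\{0,\dots,k-1\}$ (the index set of a nontop copy), there is at least one $i\in\{0,\dots,k-1\}$ such that $\sigma_i(v^I)\ne 0$, and moreover one can reconstruct $I$ from the tuple $(\sigma_i(v^I))_i$; conversely any element of the target satisfying all the square-compatibility relations is forced to have components that assemble, via these reconstructions, into a well-defined element of $\widetilde T^k_\bullet$.} The injectivity/reconstruction direction is a finite combinatorial case analysis on which of the $k$ reindexing operations $\overset{i\leftarrow}{I}$ remain strictly increasing — since $I$ omits some element of $\{0,\dots,k-1\}$, collapsing at the right spot is non-degenerate. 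The surjectivity-onto-the-equalizer direction is the delicate one: given a tuple satisfying the commuting-square relations, I would argue level by level in the grading (equivalently, by $|I|$), using the relations to show the would-be preimage is consistently defined regardless of which path of $\sigma$'s one uses — this is precisely the cocycle-type condition encoded by \eqref{diag:A}, and it is what the earlier flawed arguments (Li's Lemma 8.23) got wrong.

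\textbf{Main obstacle.} The hard part will be the combinatorics of the surjectivity-onto-the-equalizer step: one must check that the compatibility relations coming from all pairs $0\le i\le j\le k-2$ are \emph{exactly enough} — not too few — to guarantee a consistent lift, i.e. that the nerve of the system of $\sigma$-maps has trivial "$H^1$" in the relevant range. I would organize this by choosing, for each nontop $I$, a canonical $\sigma_i$ that detects it (e.g. the smallest $i$ not causing a collapse), defining the candidate preimage using that canonical choice, and then using the square relations to propagate the value to all other $\sigma_j$'s, checking termination and consistency by induction on $|I|$ — the simplicial identities \eqref{eq:face-degen} for $D_\bullet$, transported to the $\overset{i\leftarrow}{I}$ operations, are what make the propagation well-defined. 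Everything else (that these algebras really are function algebras of graded manifolds, that $\mathfrak S$ preserves the limit, that a connection produces the claimed strict splittings) is supplied by Proposition~\ref{pro:S-limit}, Theorem~\ref{thm:splitgrad}, and the lemma preceding Eq.~\eqref{eq:Ek-split}.
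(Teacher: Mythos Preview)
Your overall strategy is correct and is precisely the paper's: reduce to $\Z^{<0}$-graded vector bundles via Proposition~\ref{pro:S-limit}, then solve the resulting combinatorial problem of reconstructing a unique $w\in \widetilde T^k_\bullet$ from a compatible family $(v_0,\dots,v_{k-1})$. You also correctly identify that injectivity is easy and that ``surjectivity onto the equalizer'' is the hard part.

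Two points, however. First, a bookkeeping slip: $[k-1]=\{0,\dots,k-1\}$ has $k$ elements, so $(T[1]^{k-1}M)^{[k-1]}$ is $k$ copies, not $k-1$; this matters when you set up the tuple $(v_0,\dots,v_{k-1})$. Also, it is not $\sigma_0$ alone that kills the top copy $T[k]M$; every $\sigma_i$ does (the target has no degree $-k$), which is why the quotient is forced.

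Second, and this is the substantive point: your proposed induction ``on $|I|$'' plus ``smallest $i$ not causing a collapse'' is not sharp enough. Components of different degree do not interact, so ``induction on $|I|$'' just means ``treat each degree separately'' --- correct but trivial. The real induction is \emph{within} a fixed degree, and there the defining equation for $w^J$ generically has two terms, $w^J$ and some companion $w^{J'}$ with $|J'|=|J|$; e.g.\ for $J=\{1,3,5\}$ (paper's indexing, $k=5$) every available equation is two-term, so no $\sigma_i$ ``reads off'' $w^J$ directly. The paper resolves this by choosing $j=\max\{l\notin J\}-1$ and doing \emph{backward lexicographic} induction on $J$: with this choice the companion $w^{\overset{j\curvearrowright j+1}{J}}$ is always lexicographically later, hence already defined. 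The nontrivial consistency check (their Eq.~\eqref{eq:inductionstep}) is then discharged by a single use of the compatibility $\sigma_i v_j=\sigma_{j-1}v_i$ on the right component. Your outline is on the right track, but until you specify such an intra-degree ordering and verify that your ``canonical $i$'' makes the companion index strictly later in that order, the argument has the same gap as the earlier attempts the lemma is meant to repair.
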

\begin{proof}
Upon fixing a connection on $TM$, we can view these diagrams as diagrams in the category of $\Z^{<0}$-graded vector bundles and show that there is a limit there. We have a similar splitting of $T[1]^l M$ by $T^l$ as in \eqref{eq:T-k}, but as here we are taking $\Hom(\star \times D^l, M)$ instead of $\Hom( D^l, M)$ directly as therein, the indices are shifted by 1. That is, here $T[1]^l M$ has a splitting by $\oplus_{i=1}^{l}\tilde{T}^l_{-i}$ with
\begin{equation}\label{eq:tilde-T-l}
  \tilde{T}^l_{-i} = \bigoplus_{I\subset \{1, \dots, l\}: |I|=i} TM_I.  
\end{equation}

Since all morphisms involved are ($\Z^{<0}$-graded) vector bundle morphisms over the identity of $M$, the lemma boils down to show the following observation:
\begin{observation}\label{ob:explicit}
For any family of elements $\{v_0,...,v_{k-1}\}$ in $ \tilde{T}^{k-1}_\bullet$
---the graded vector bundle corresponding to $T[1]^{k-1}M=\Hom(\star \times D^{k-1}, M)$, satisfying compatibility equations 
\begin{equation}\label{eq:v-cond}
    \sigma_iv_{j+1}=\sigma_j v_i , \qquad \forall 0\leq i\leq j \leq k-2,
\end{equation}
there is a unique element $w\in T[1]M^{\binom{k}{1}} \oplus T[2]M^{\binom{k}{2}} \oplus \dots \oplus T[k]M^{\binom{k}{k-1}} \subset \tilde{T}^{k}_\bullet$  (which is the graded vector bundle corresponding to $T[1]^kM/T[k]M$)  mapping to them. 
\end{observation}

An element $v_i$ in $\tilde{T}^{k-1}_\bullet$ is given by its components $\{v_i^I\in TM_I\}$, where $I$ runs through all subsets of the set $\{1,...,k-1\}$.  The desired element $w$ will have components $\{w^I\in TM_I\}$ for all non-full subsets $I\subset \{1,...,k\}$. 

Let us try to understand the equation $\sigma_iw=v_i$ for a fixed multiindex $I\subset \{1,...,k-1\}$. We use notation in Notation \ref{not:index}. Notice that even though we have a shift, the notations therein stay with the same meaning. 

There are two options:
\begin{itemize}
    \item $i\not\in I$: Then the only contribution to $(\sigma_iw)^I$ comes from $w^J$ for $J=\overset{i\to}{I}$. 
    \item $i\in I$. In this case $I$ can still come from $\overset{i\to}{I}$, but also from $\overset{i-1\to}{I}$.
 \end{itemize}

Hence $\sigma_i w = v_i$ is equivalent to,  
\begin{align}\label{coordeq}
    v_i^I=\left\{ \begin{matrix}
    w^{\overset{i\to}{I}}&&i\not\in I\\
    w^{\overset{i\to}{I}}+w^{\overset{i-1\to}{I}}&&i\in I
    \end{matrix}
    \right.
\end{align}for all multi-indices $I\subset \{1, 2, \dots, k-1\}$ and all $i\in \{0,...,k-1\}$. Thus, our proof boils down to showing that linear equations in Eq.~\eqref{coordeq} provide a unique solution $w$ with fixed $v_i$ satisfying Eq.~\eqref{eq:v-cond} for $i\in \{0,...,k-1\}$. We prove this statement in two steps, which are summarized in Observation \ref{ob:eqs-wJ} and Observation \ref{ob:key}.

We first notice that
\begin{itemize}
    \item Components with different degree, namely index-length, in $\tilde{T}^{l}_{\bullet}$ do not interfere with each other. Especially, the component of $w$ of index-length $j$ only depends on the component of $v_i$'s with index-length $j$. 
\item There is no equation containing the highest degree $w^J$ with $J=(1,2,3,4,...,k)$. This is the reason why the pullback will be $T[1]^kM/T[k]M$ and not $T[1]^kM$.
\end{itemize}

Now we try to revert \eqref{coordeq} to get all equations to a form in which a fixed $w^J$ (i.e., a fixed multi-index $J$) is involved on the right-hand side. We do this by case-scanning.  Let us fix $J$ and $i$, we have the following case distinction:
\begin{enumerate}
    \item $i\in J$ and $i+1\in J$: Then $J$ can not be achieved as some $\overset{i\to}I$ or $\overset{i-1\to}I$. I.e., there is no equation involving $v_i$ and $w^J$.
    \item $i\in J$ and $i+1\not\in J$: Then $J$ can not be of the form $\overset{i-1\to}I$, but it can be of the form $J=\overset{i\to}I$, and in this case $I=\overset{i\leftarrow }{J}$. 
    This $v_i^{\overset{i\leftarrow }{J}}$ appears in the equation $v_i^{\overset{i\leftarrow }{J}}=w^{{J}}+ w^{\overset{i-1\to}{\overset{i\leftarrow }{J}}}$. The second summand (pulling everything after $i$ first, then pushing everything after $i-1$) turns out to change $i$ into $i+1$ and leave the rest as it is. We denote such operation by $\overset{i\curvearrowright i+1}{J}$. That is, we have $\overset{i-1\to}{\overset{i\leftarrow }{J}}=\overset{i\curvearrowright i+1}{J}$.  Therefore the unique equation involving $v_i$ and $w^J$ is:
    $$v_i^{\overset{i\leftarrow}{J}}=w^J+w^{\overset{i\curvearrowright i+1}{J}}.$$
   
    \item $i\not \in J$ and $i+1\in J$: This case is the opposite of the previous one, $J$ can not be $\overset{i\to}{I}$, but it can be realized as $\overset{i-1\to}I$ with $I=\overset{i-1\leftarrow }{J}$. We get the equation:
    $$v_i^{\overset{i-1\leftarrow}{J}}=w^{\overset{i\curvearrowleft i+1}{J}}+w^J$$
    \item $i\not \not\in J$ and $i+1 \not\in J$: This is the case, where \eqref{coordeq} has only one summand on the right-hand side, and we get $$v_i^{\overset{i\leftarrow}{J}}=w^J$$
\end{enumerate}

In summary for a multi-index $J$ ranging from $1$ to $k$ and an index $0\leq i\leq k-1$, we have the following observation:
\begin{observation}\label{ob:eqs-wJ}
A rewriting of Eq.~\eqref{coordeq} is given in the following cases:
\begin{center} 
\begin{tabular}{ c |c |c } 
  & $i\in J$ & $i\not\in J$\\ \hline
  $i+1\in J$& no equation & $v_i^{\overset{i-1\leftarrow}{J}}=w^{\overset{i\curvearrowleft i+1}{J}}+w^J$, if $i\ge 1$ \\   \hline
 $i+1\not\in J$ & $v_i^{\overset{i\leftarrow}{J}}=w^J+w^{\overset{i\curvearrowright i+1}{J}}$ & $v_i^{\overset{i\leftarrow}{J}}=w^J$.     
\end{tabular}
\end{center}
\end{observation}
Notice that the equation including $\overset{0 \curvearrowleft 1}{J}$ vanishes by definition. That is the reason why in the right top field, there is the additional assumption $i\ge 1$.\\ 

We are now going to show 
\begin{observation}\label{ob:key}
When $k\ge 2$, for any multi-index $J$ ranging from 1 to $k$ with length $l:=|J|<k$, the formulas in \eqref{coordeq} uniquely determine $\{w^J |~|J|=l\}$.
\end{observation}
We pick the lexicographical order on the multi-indices of length $l$ and do an induction-proof backwards.  

\begin{enumerate}
\item[A)] Initial case: For this, we only need to show that for the last multi-index $J=(k-l+1,...,k)$, we have
    \begin{enumerate}
        \item {\bf We can define $w^J=v_0^{\overset{0\leftarrow}{J}}$}, as $0, 1\not\in J$. 
        \item {\bf All  equations containing only $w^J$ in Observation \ref{ob:eqs-wJ} are satisfied if we define $w^J$ by (a).}
    \end{enumerate}   
\begin{proof}[Proof for A)] The formulas containing only $w^J$ are those of the type $w^J=v_i^{\overset{i\leftarrow}{J}}$, i.e., those $i>0$ where $i,i+1\not\in J$. For those, we have to show that
$$
v_i^{\overset{i\leftarrow}{J}}=v_0^{\overset{0\leftarrow}{J}}, 
$$
so that there is no contradiction among equations involving $w^J$ if $w^J$ is defined by (a). 

We consider the ${\overset{0\leftarrow}{I}}$ component of equation  $\sigma_0v_{i}=\sigma_{i-1}v_0$, where $I={\overset{i\leftarrow}{J}}={\overset{0\leftarrow}{J}}=(k-l,...,k-1)$. Since $0,i-1\not\in {\overset{0\leftarrow}{I}}$ and ${\overset{0\leftarrow}{I}}={\overset{i-1\leftarrow}{I}}$ we get $$
v_i^{\overset{i\leftarrow}{J}}=v_i^I=(\sigma_0v_{i})^{{\overset{0\leftarrow}{I}}}=(\sigma_{i-1}v_0)^{{\overset{i-1\leftarrow}{I}}}=v_0^I=v_0^{\overset{0\leftarrow}{J}}.
$$
\end{proof}

\item[B)] Induction Step: Fix a multi-index $J$ and assume that $w^{\tilde J}$ are defined and uniquely defined for all $\tilde J>J$ with respect to the lexicographical order,  and that all equations involving only $\{w^{\tilde J}| \tilde J>J\}$ in Observation \ref{ob:eqs-wJ} are satisfied. Then
    \begin{enumerate}
        \item {\bf We can define $w^J$ as follows:}
        Let $j=\max \{l\in\{1,...,k\} | l\not\in J\}-1$, then $j+1 \not\in J$. We set $w^J$ by:
        $$w^J=\left\{ \begin{array}{lr} v_j^{\overset{j\leftarrow}{J}} & \text{if}\; j\not\in J\\ v_j^{\overset{j\leftarrow}{J}}- w^{\overset{j\curvearrowright j+1}{J}} & \text{if}\; j\in J \end{array}\right.$$
        \item {\bf All equations involving only $\{w^{\tilde J}| \tilde J\geq J\}$ in Observation \ref{ob:eqs-wJ} are satisfied. }
    \end{enumerate}    
\begin{proof}[Proof for B)]
        In order to avoid the case distinction in the calculation, we  define 
        \begin{equation}\label{eq:shorten}
            w^{\overset{l\curvearrowright l+1}{J}}:=0,  \quad \text{ when } l\not\in J \text{ or }l+1\in J.
        \end{equation} 
        With this notation, the right bottom equation is a special case of the left bottom equation, thus we save the discussion of one case.
        The formulas involving $w^J$ and not involving any multi-index of $w$ which is smaller than $J$ are all of the type $v_i^{\overset{i\leftarrow}{J}}- w^{\overset{i\curvearrowright i+1}{J}} $, i.e., formulas with $v_i$ where $i+1\not \in J$, keeping in mind the notation \eqref{eq:shorten}.
        By our construction of $j$, it follows that $i<j$.  We  need to verify  
\begin{align}\label{eq:inductionstep}
    v_i^{\overset{i\leftarrow}{J}}-w^{\overset{i\curvearrowright i+1}{J}}=
v_j^{\overset{j\leftarrow}{J}}-w^{\overset{j\curvearrowright j+1}{J}},
\end{align}
so that $w^J$ is well defined by (a).

We first notice that we can rewrite the equation,  
\begin{align*}
    (\sigma_lv_k)^I=\left\{ \begin{matrix}
    v_k^{\overset{l\to}{I}}&&l\not\in I\\
    v_k^{\overset{l\to}{I}}+v_k^{\overset{l-1\to}{I}}&&l\in I
    \end{matrix}
    \right.
\end{align*}
to  $(\sigma_lv_k)^I=v_k^{\overset{l\to}{I}}+v_k^{\overset{l\curvearrowright l+1}{\overset{l\to}{I}}}$, using the notation with the curved arrows with convention \eqref{eq:shorten}.

Then for $i<j$,  we have $\sigma_iv_j=\sigma_{j-1}v_i$. On the component $I=\overset{j-1\leftarrow}{\overset{i\leftarrow}{J}}=\overset{i\leftarrow}{\overset{j\leftarrow}{J}}$, $\sigma_iv_j=\sigma_{j-1}v_i$ reads:  
\begin{equation} \label{eq:sig}
    \begin{split}
        &(\sigma_iv_j)^I=v_j^{\overset{i\to}{I}}+v_j^{\overset{i\curvearrowright i+1}{\overset{i\to}{I}}}=v_j^{\overset{j\leftarrow}{J}}+v_j^{\overset{i\curvearrowright i+1}{\overset{j\leftarrow}{J}}} \\
    =&(\sigma_{j-1}v_i)^I=v_i^{\overset{j-1\to}{I}}+v_i^{\overset{j-1\curvearrowright j}{\overset{j-1\to}{I}}}=v_i^{\overset{i\leftarrow}{J}}+v_i^{\overset{j-1\curvearrowright j}{\overset{i\leftarrow}{J}}}. 
    \end{split}
\end{equation} Here $\overset{i\to}{I}=\overset{j\leftarrow}{J}$ by definition of $I$. 

We will now prove  Equation \eqref{eq:inductionstep}, the proof works differently for $i+1=j$ and $i+1<j$:

When $i+1<j$, by Eq.~\eqref{eq:sig}, Eq.~\eqref{eq:inductionstep} reduces to
\begin{align*}
        v_i^{\overset{i\leftarrow}{\overset{j\curvearrowright j+1}{J}}}-w^{\overset{j\curvearrowright j+1}{J}} =
v_j^{\overset{j\leftarrow}{\overset{i\curvearrowright i+1}{J}}}-w^{\overset{i\curvearrowright i+1}J}
\end{align*}
By induction hypothesis, $w^{\overset{i\curvearrowright i+1}J} = v_j^{\overset{j\leftarrow}{\overset{i\curvearrowright i+1}J}}- w^{\overset{j\curvearrowright j+1}{\overset{i\curvearrowright i+1}J}}$ and $w^{\overset{j\curvearrowright j+1}J} = v_i^{\overset{i\leftarrow}{\overset{j\curvearrowright j+1}J}}- w^{\overset{i\curvearrowright i+1}{\overset{j\curvearrowright j+1}J}}$. As $i+1<j$, $w^{\overset{j\curvearrowright j+1}{\overset{i\curvearrowright i+1}J}}= w^{\overset{i\curvearrowright i+1}{\overset{j\curvearrowright j+1}J}}$. This concludes the proof for the case $i+1<j$.\\

When $i+1=j$, then we know $j=i+1, j+1=i+2\not\in J$. Equation \eqref{eq:inductionstep} then reads:
 $$   v_i^{\overset{i\leftarrow}{J}}-w^{\overset{i\curvearrowright i+1}{J}}=
v_j^{\overset{j\leftarrow}{J}}
$$
By induction hypothesis, we can now replace $w^{\overset{i\curvearrowright i+1}{J}}$ by $
v_j^{\overset{j\leftarrow}{\overset{i\curvearrowright i+1}J}} - w^{\overset{j\curvearrowright j+1}{\overset{i\curvearrowright i+1}J}}
$ and subsequently $w^{\overset{j\curvearrowright j+1}{\overset{i\curvearrowright i+1}J}}$ by $v_i^{ \overset{i\leftarrow}{\overset{j\curvearrowright j+1}{\overset{i\curvearrowright i+1}J}}}$ making the following equation the aim of proof:
$$
 v_i^{\overset{i\leftarrow}{J}}-v_j^{\overset{j\leftarrow}{\overset{i\curvearrowright i+1}J}} + v_i^{ \overset{i\leftarrow}{\overset{j\curvearrowright j+1}{\overset{i\curvearrowright i+1}J}}}=
v_j^{\overset{j\leftarrow}{J}}.
$$
Upon observing that $\overset{j\leftarrow}{\overset{i\curvearrowright i+1}J}=\overset{i\curvearrowright i+1}{\overset{j\leftarrow}J}$ and ${ \overset{i\leftarrow}{\overset{j\curvearrowright j+1}{\overset{i\curvearrowright i+1}J}}}=\overset{j-1\curvearrowright j}{\overset{i\leftarrow}{J}}$, this equation reduces exactly to Eq.~\eqref{eq:sig}. Thus, Eq.~\eqref{eq:inductionstep} is satisfied also in the $j=i+1$ case, completing the proof of the induction step, hence the proof of Observation \ref{ob:key}. 
\end{proof}
\end{enumerate}
Observation \ref{ob:key} implies Observation \ref{ob:explicit} and thus concludes the proof of this Lemma. 
\end{proof}

\begin{remark}\label{rm:MM}
We believe that \cite[Theorem 2.7]{madeleine-malte} is proving some similar result in another language and may be used to prove this Lemma if sufficient translation is provided. However, straightforwardly, the space $P$ therein is completely different from our limit $T[1]^kM/T[k]M$ as the maps $p_j^{\underline{n}
\backslash \{i\}}$ to form the limit are different from ours, namely the $\sigma_i$'s. Translating to our simplicial language, $p_j^{\underline{n} \backslash \{i\}}$ are degeneracy maps $D^{n-1}\to D^{n}$ of the nerve of the group $D\Rightarrow \star$ but not those of the nerve of the pair groupoid $D\times D \Rightarrow D$ as in our case. We leave the usage of this Theorem in the proof of our result to a future, possibly joint work.  
\end{remark}

\subsection{A monomorphism of $T[1]^k \Horn{k}{0}X$ to $(T[1]^{k}X_{k-1})^{[k]\backslash 0}$}\label{sec:B}
The map $d_J^X:X_k\to X_{k-1}^{[k]\backslash 0}$, for $J=(1, \dots, k)$, is easier to take care of, and one does not need a choice of connection.
Given a Lie $\infty$-groupoid $X_\bullet$, the maps $(d_1^X,....,d_k^X)$ factor through the horn $\Horn{k}{0}X$, which  can be seen as a subset of $X_{k-1}^{[k]\backslash 0}$. Moreover, it is exactly the limit of the following diagrams for $1\leq i<j\leq k$:

$$
\xymatrix{
\Horn{k}{0}(X) \ar[d]^{d_{j}^X} \ar[r]^{d_i^X} &X_{k-1}\ar[d]^{d_{j-1}^X}\\
X_{k-1}\ar[r]^{d_{i}^X} & X_{k-2}
}
$$
\begin{lemma}\label{lem:B}Let $X_\bullet$ be a Lie $\infty$-groupoid.  Then $ T[1]^k\Horn{k}{0}(X)$ is the limit of the following diagrams in the category of $\Z$-graded manifolds:
$$
\xymatrix{
T[1]^k\Horn{k}{0}(X)\ar[d]^{T[1]^k{d^X_{j}}} \ar[r]^{T[1]^k{d^X_i}} &T[1]^kX_{k-1}\ar[d]^{T[1]^k{d^X_{j-1}}}\\
T[1]^kX_{k-1}\ar[r]^{T[1]^k{d^X_{i}}} & T[1]^kX_{k-2}.
}
$$  
\end{lemma}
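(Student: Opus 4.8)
The statement is that the functor $T[1]^k(-) = \Hom(D^k, -)$ sends the limit cone exhibiting $\Horn{k}{0}(X)$ (as the limit of the diagram of face maps $d_i^X, d_{j-1}^X$ over $X_{k-1}$ and $X_{k-2}$) to the limit cone for the corresponding diagram of $T[1]^k X_{k-1}$ and $T[1]^k X_{k-2}$. The plan is to show that $T[1]^k(-)$, restricted to the relevant class of manifolds and maps, preserves this particular limit. The key structural fact is that the horn $\Horn{k}{0}(X)$ is the limit in $\Mfd$ of the displayed diagram (this is essentially the content of \cite[Corollary 2.5]{henriques}, reproved here as the diagram above the lemma), and moreover all the legs $d_i^X : \Horn{k}{0}(X) \to X_{k-1}$ are \emph{surjective submersions} (restrictions of the simplicial-manifold face structure of a Lie $\infty$-groupoid), as are the maps $d_{j-1}^X : X_{k-1} \to X_{k-2}$ in the base. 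Thus the diagram is a diagram of submersions, and the limit is a manifold fibered product built out of submersions.

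\medskip

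First I would reduce to iterated binary fiber products: the limit of the displayed diagram is obtained by successively forming pullbacks $X_{k-1} \times_{X_{k-2}} X_{k-1}$ along pairs of the face maps, and since each such pullback is along a submersion it is again a manifold and the result is a transversal (hence clean) fiber product. So it suffices to show that $\Hom(D^k, -)$ preserves pullbacks of the form $A \times_C B$ where $B \to C$ is a surjective submersion. For this I would invoke the following chain: $\Hom(D^k, -) = T[1]^k(-)$ is, by iterating Theorem~\ref{thm:splitgrad} (and Corollary of that theorem), an iterated $1$-shifted tangent construction; on ordinary manifolds each $T[1]$ sends a surjective submersion to a surjective submersion (of graded manifolds), and—crucially—Corollary~\ref{cor:comp-conn} tells us that for a submersion $M \to N$ one can choose compatible connections on $T^*M$ and $T^*N$ so that $T[1]^k M \to T[1]^k N$ becomes a \emph{strict} morphism of N-manifolds, i.e. comes from a (fiberwise surjective) morphism of $\Z^{<0}$-graded vector bundles. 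Then Proposition~\ref{pro:S-limit} applies: the functor $\mathfrak{S}$ preserves limits, and the limit of $\Z^{<0}$-graded vector bundles along a fiberwise surjective morphism is computed levelwise as an ordinary pullback of vector bundles, which is preserved. Combining, $T[1]^k$ applied to the limit cone of the horn diagram yields the limit cone of the $T[1]^k$-diagram.

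\medskip

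Concretely, the steps in order are: (1) Recall/establish that $\Horn{k}{0}(X)$ is the limit in $\Mfd$ of the displayed diagram with legs that are surjective submersions, and that this limit is a clean fiber product; (2) using Corollary~\ref{cor:comp-conn}, choose connections on $T^*X_{k-1}, T^*X_{k-2}$ (and the pulled-back connection on the horn) making \emph{all} the maps $T[1]^k d_i^X$ strict morphisms of N-manifolds, simultaneously compatible across the whole diagram — this is possible because one can first fix the connection downstairs on $X_{k-2}$, pull it back / extend to $X_{k-1}$ along the submersions, and then to $\Horn{k}{0}(X)$; (3) pass through $\mathfrak{F}$ to the category $\Z^{<0}\Vectbd$, where the diagram becomes a diagram of fiberwise surjective graded vector bundle morphisms over a diagram of submersions of base manifolds; (4) compute the limit there levelwise (degree by degree) as an ordinary vector bundle fiber product, which exists and is again a finite-rank graded vector bundle because submersions are transversal to everything; (5) apply Proposition~\ref{pro:S-limit} to conclude $\mathfrak{S}$ of this limit is the limit in $\NMfd$, and identify it with $T[1]^k \Horn{k}{0}(X)$ via step (1) and the splitting from Theorem~\ref{thm:splitgrad}.

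\medskip

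\textbf{The main obstacle.} The delicate point is step (2): making the choices of connections \emph{coherent across the entire diagram} at once, so that every face map in the diagram simultaneously becomes strict with respect to one fixed system of splittings. Corollary~\ref{cor:comp-conn} and the compatible-connections lemma give this for a single submersion $M \to N$, but the horn diagram has many maps sharing sources and targets, and a connection that is compatible with one leg need not be compatible with another (recall Remark~\ref{rm:strict}(4): composition of strict maps need not be strict, precisely because of clashing splittings). The way around this is to exploit that the diagram is \emph{cofiltered-like}: one fixes the connection on the terminal-ish object $X_{k-2}$, pulls it back along each submersion $d_{j-1}^X$ to get a connection on $X_{k-1}$, but these pullbacks along the two different faces $d_{i}^X$ and $d_{j}^X$ into $X_{k-1}$ might disagree — so one must instead fix the connection on $X_{k-1}$ first and check that its pushforward-compatibility downstairs can be arranged, using the fiberwise-surjective splitting $E^* \cong \phi^* F^* \oplus K$ from the lemma preceding Corollary~\ref{cor:comp-conn} to absorb the discrepancy into the complementary bundle $K$. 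I expect that the cleanest route is actually to \emph{avoid} needing global coherence of splittings for the limit statement itself: since $\mathfrak{S}$ is a right adjoint (Proposition~\ref{pro:S-limit}) it preserves \emph{all} limits regardless of splittings, and the only role of connections is to exhibit the resulting object concretely; so one proves the limit statement abstractly via adjunction applied to the $\Z^{<0}$-graded-vector-bundle diagram (which is genuinely a diagram there once we know the horn limit is a submersion-fiber-product and $\mathfrak{F}$ is defined on it), and then separately, using a compatible system of connections chosen as above, records the explicit splitting. This separation is what keeps the argument honest.
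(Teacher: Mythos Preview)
Your approach is correct in spirit but takes a much longer route than the paper. The paper's proof is a single line: the functor $T[1]=\Hom(D,-)$ is right adjoint to the functor $D\times(-)$, since by the very definition of $T[1]$ as a representable internal hom one has $\hom(A,T[1]B)=\hom(D\times A,B)$; right adjoints preserve all limits, so $T[1]^k$ (an iterate of a right adjoint) sends the limit cone defining $\Horn{k}{0}(X)$ to a limit cone. No connections, no splittings, no submersion hypotheses are needed at this stage.

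Your strategy instead routes through splittings and the adjunction $\mathfrak{F}\dashv\mathfrak{S}$ of Proposition~\ref{pro:S-limit}. This can be made to work, but as you yourself identify, it manufactures the artificial obstacle of choosing connections coherently across the entire horn diagram so that every face map is simultaneously strict with respect to one fixed system of splittings. Your proposed resolution in the final paragraph is still not quite closed: to invoke that $\mathfrak{S}$ preserves limits you need the diagram to already live in $\Z^{<0}\Vectbd$, which is exactly the coherent-splitting hypothesis you are trying to sidestep; the sentence ``which is genuinely a diagram there once we know the horn limit is a submersion-fiber-product and $\mathfrak{F}$ is defined on it'' does not explain why $\mathfrak{F}$ applied to the individual objects assembles into a commuting diagram without the coherence. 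The direct adjunction $D\times(-)\dashv T[1]$ bypasses all of this because it operates at the level of graded manifolds (indeed presheaves) before any splitting is chosen. The compatible-connection machinery you invoke is genuinely needed elsewhere in the paper---for instance in Lemma~\ref{lem:key} and Lemma~\ref{lem:ss}, where one must compare splittings explicitly---but not for this lemma.
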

\begin{proof}
Here the proof follows directly from the fact that the functor $T[1]$ preserves limits, since it is the right-adjoint to the functor $ D\times $, i.e., $\hom(D\times A, B)=\hom(A,T[1]B)$.
\end{proof}
\begin{remark}\label{rm:mono}
This Lemma gives us a map $T[1]^k\Horn{k}{0}(X) \xrightarrow{T[1]^k i_{{k},{0}}} (T[1]^{k}X_{k-1})^{[k]\backslash 0}$ of $\Z$-graded manifolds over the embedding $\Horn{k}{0}(X) \xrightarrow{i_{{k},{0}}} X_{k-1}^{[k]\backslash 0}$. Since $T[1]$ is the right-adjoint to the functor $D\times$ and right-ajoints preserves monomorphism, $T[1]^k i_{{k},{0}}$ is a monomorphism in $\Z\Mfd$. Recall that monomorphism means left-cancellation (see Remark \ref{rm:strict}). To have left-cancellation in $\Z\Mfd$ implies having left-cancellation in its subcategory $\NMfd$. Thus, $T[1]^k i_{{k},{0}}$ is also a monomorphism in $\NMfd$.
\end{remark}

\subsection{The induction step} \label{sec:A-B}

We are now prepared to characterize the image of $T[1]^k X_k$ inside $(T[1]^{k} X_{k-1})^{[k]\backslash 0}  \times (T[1]^{k-1}X_k)^{[k-1]}$. The tangent maps of horn projection fit into the following commutative diagrams of N-manifolds
\begin{equation*}
    \xymatrix{T[k]X_k \ar[r] \ar[d]^{T[k]p^k_0} & T[1]^k X_k \ar[d]^{T[1]^k p^k_0} \\
    T[k] \Horn{k}{0}(X) \ar[r] & T[1]^k \Horn{k}{0}(X)}
\end{equation*} Since $p^k_0$ is a surjective submersion, according to Cor.\ref{cor:comp-conn}, we may choose compatible connections on $T^*X_k$ and $T^*\Horn{k}{0}(X)$ such that $T[1]^k p^k_0$ is a strict morphism, namely it comes from a morphism of graded vector bundles 
\begin{equation} \label{eq:slit-p-k}
  \bigoplus_{i=1}^k \bigoplus_{I\subset \{1, \dots, k\}: |I|=i} T(X_k)_I \xrightarrow{\oplus_i \oplus_I (Tp^k_0)_I}  \bigoplus_{i=1}^k \bigoplus_{I\subset \{1, \dots, k\}: |I|=i} T\Horn{k}{0}(X)_I.
\end{equation}
It is clear that the above map $\oplus_i \oplus_I (Tp^k_0)_I$ is a surjective submersion. Notice that $T[k]X_k$  
and $T[k] \Horn{k}{0}(X) $ are components when $i=k$ of the above direct sums   on the left and right respectively, 
and the map $T[k]p^k_0$ is a component of $\oplus (Tp^k_0)_I$. After mod out these components,  
\begin{equation} \label{eq:slit-p}
  \bigoplus_{i=1}^{k-1} \bigoplus_{I\subset \{1, \dots, k\}: |I|=i} T(X_k)_I \xrightarrow{\oplus_i \oplus_I (Tp^k_0)_I}  \bigoplus_{i=1}^{k-1} \bigoplus_{I\subset \{1, \dots, k\}: |I|=i} T\Horn{k}{0}(X)_I,
\end{equation} is again a surjective submersion. This shows that 
\begin{equation*}
    T[1]^k X_k /T[k]X_k \xrightarrow{(p^k_0)_*} T[1]^k\Horn{k}{0}(X)/ T[k] \Horn{k}{0}(X), 
\end{equation*}
is again a strict morphism and it comes from a surjective submersion of graded vector bundles thus a strict surjective submersion. It is also clear that the map $T[1]^k\Horn{k}{0}(X) \xrightarrow{q} T[1]^k \Horn{k}{0}(X)/T[k] \Horn{k}{0}(X)$ is also a strict surjective submersion.  Let 
\begin{equation*}
    A:=T[1]^k X_k / T[k] X_k,  \quad B:= T[1]^k \Horn{k}{0}(X), \quad C:=T[1]^k \Horn{k}{0}(X)/T[k] \Horn{k}{0}(X). 
\end{equation*}
Thus, with the above maps $p:=(p^k_0)_*$ and $q$, the limit $A\times_{ C} B$ is again an N-manifold which comes with a splitting given by a pair of compatible connections on $T^*X_k$ and $T^*\Horn{k}{0}(X)$ according to Section\ref{sec:limit}. We further show that:

\begin{lemma}\label{lem:ss}
The limit $A\times_C B \cong T[1]^kX_k/\ker Tp^k_0 [k]$. Thus, the natural map $T[1]^k X_k \to A\times_C B$ is a strict surjective submersion with respect to splittings determined by compatible connections on $T^*X_k$ and $T^*\Horn{k}{0}(X)$. 
\end{lemma}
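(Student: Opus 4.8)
The plan is to reduce the whole statement to a degreewise computation with $\Z^{<0}$-graded vector bundles, where by construction every map involved is \emph{strict}. With the compatible connections on $T^*X_k$ and $T^*\Horn{k}{0}(X)$ chosen via Corollary~\ref{cor:comp-conn}, the splittings of $A$, $B$, $C$ are the ones read off from \eqref{eq:slit-p-k}--\eqref{eq:slit-p}, the map $p=(p^k_0)_*\colon A\to C$ is strict (it comes from the surjective submersion $\oplus_i\oplus_I(Tp^k_0)_I$ of graded vector bundles over $p^k_0$), and $q\colon B\to C$ is strict, being the projection killing the top-degree component $T[k]\Horn{k}{0}(X)$. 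Since the base map of $p$ is the surjective submersion $p^k_0$, the pullback of the corresponding $\Z^{<0}$-graded vector bundles exists; and since $\mathfrak S$ preserves limits (Proposition~\ref{pro:S-limit}), one has $A\times_C B\cong\mathfrak S(V_A\times_{V_C}V_B)$, where $V_A,V_B,V_C$ denote the connection-induced splittings.

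Next I would compute $V_A\times_{V_C}V_B$ degree by degree over its body $X_k\times_{\Horn{k}{0}(X),\,p^k_0,\,\id}\Horn{k}{0}(X)\cong X_k$. In degrees $1,\dots,k-1$ the leg $q$ is the identity, so the fibre product collapses to the $V_A$-part, namely $\bigoplus_{i=1}^{k-1}\bigoplus_{|I|=i}T(X_k)_I$. In degree $k$, $V_A$ vanishes, $V_C$ vanishes, and $V_B$ contributes $T\Horn{k}{0}(X)$ (the summand with $I=\{1,\dots,k\}$), so the fibre product in degree $k$ is $0\times_0 T\Horn{k}{0}(X)$, which as a bundle over $X_k$ is the pullback $(p^k_0)^*T\Horn{k}{0}(X)$ placed in degree $-k$. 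Hence $V_A\times_{V_C}V_B\cong\bigoplus_{i=1}^{k-1}\bigoplus_{|I|=i}T(X_k)_I\oplus\big((p^k_0)^*T\Horn{k}{0}(X)\big)[k]$.

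Finally, because $p^k_0$ is a surjective submersion, $Tp^k_0\colon TX_k\to T\Horn{k}{0}(X)$ is fibrewise surjective, so its canonical factorisation $TX_k\to(p^k_0)^*T\Horn{k}{0}(X)$ is a fibrewise-surjective vector bundle morphism over $\id_{X_k}$ with kernel $\ker Tp^k_0$; therefore $TX_k/\ker Tp^k_0\cong(p^k_0)^*T\Horn{k}{0}(X)$. Substituting this identifies $V_A\times_{V_C}V_B$ with the splitting of $T[1]^kX_k/\ker Tp^k_0[k]$ determined by the same connections, which yields $A\times_C B\cong T[1]^kX_k/\ker Tp^k_0[k]$. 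The structural map $T[1]^kX_k\to A\times_C B$ (obtained from the quotient $T[1]^kX_k\to A$ and $T[1]^kp^k_0\colon T[1]^kX_k\to B$, which agree in $C$ by the commutative diagram preceding Lemma~\ref{lem:ss}) then reads, on splittings, as the identity in degrees $1,\dots,k-1$ and as the quotient $TX_k\twoheadrightarrow TX_k/\ker Tp^k_0$ in degree $k$: a fibrewise-surjective morphism of $\Z^{<0}$-graded vector bundles over $\id_{X_k}$, hence, applying $\mathfrak S$, a strict surjective submersion.

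I expect the main obstacle to be organisational rather than conceptual: checking that a \emph{single} choice of compatible connections on $T^*X_k$ and $T^*\Horn{k}{0}(X)$ simultaneously renders $p$, $q$, the inclusions $T[k](-)\hookrightarrow T[1]^k(-)$, and the structural map all strict with respect to mutually compatible splittings, so that the degreewise computation is legitimate; the bookkeeping of the index sets $I\subset\{1,\dots,k\}$ and of which summand sits in which degree is routine but must be tracked carefully.
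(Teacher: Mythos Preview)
Your proposal is correct and follows the same approach as the paper's own proof: both descend via the compatible connections to $\Z^{<0}$-graded vector bundles (invoking that $\mathfrak S$ preserves limits), identify $(p^k_0)^*T\Horn{k}{0}(X)\cong TX_k/\ker Tp^k_0$, and read off the fibre product degreewise. Your writeup is considerably more detailed than the paper's, which compresses the entire argument into the sentence ``a linear algebra calculation in the form of $(V'/W')\times_{V/(W'/W)}V=V'/W$ gives us the desired result''; in particular your explicit degree-by-degree identification (degrees $1,\dots,k-1$ collapse to the $V_A$-part, degree $k$ contributes $(p^k_0)^*T\Horn{k}{0}(X)$) and your separate treatment of the strict surjective submersion claim are exactly the unpacking of that sentence.
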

\begin{proof}
We apply compatible connections to descend the argument to the level of graded vector bundles. Once it is descended to the level of graded vector bundles, things are very clear: $C$ contains all the information of $B$ except for the highest degree $T[k] \Horn{k}{0}(X)$. Notice that $T[k]X_k/\ker Tp^k_0 [k] = (p^k_0)^*T[k] \Horn{k}{0}(X) $,  a linear algebra calculation in the form of $ (V'/W')\times_{V/(W'/W)} V =V'/W$ gives us the desired result.
\end{proof}

Using the results in Section \ref{sec:key-A} and Section \ref{sec:B}, we want to obtain a map $i_A \times i_B: A\times_C B \to  (T[1]^{k-1} X_k)^{[k-1]} \times (T[1]^kX_{k-1})^{[k]\backslash 0}$ such that $H^{k-1} \xrightarrow{()\circ \huaF_{k-1}} (T[1]^{k-1} X_k)^{[k-1]} \times (T[1]^kX_{k-1})^{[k]\backslash 0} $ lifts to $A\times_C B$. For this, we need to understand the nature of $A\times_C B$ as the limit of a diagram system.

\begin{lemma} \label{lem:abc}
The fiber product $A\times_C B$ is the limit of the following diagrams in the category of N-manifolds:
$$
\xymatrix{
A\times_C B \ar[d]^{(s^D_{i'+1})^*} \ar[r]^{(s^D_i)^*} & T[1]^{k-1}X_k\ar[d]^{(s^D_{i'})^*}\\
T[1]^{k-1}X_k\ar[r]^{(s^D_{i})^*} & T[1]^{k-2}X_k
}, 
\xymatrix{
A\times_C B \ar[d]^{(d^X_{j})_*} \ar[r]^{(d^X_{j'})_*} &T[1]^kX_{k-1}\ar[d]^{(d^X_{j-1})_*}\\
T[1]^kX_{k-1}\ar[r]^{(d^X_{j'})_*} & T[1]^kX_{k-2}
}$$
$$
\xymatrix{
A\times_C B \ar[d]^{(s^D_j)^*} \ar[r]^{(d^X_i)_*} &T[1]^kX_{k-1}\ar[d]^{(s^D_j)^*}\\
T[1]^{k-1}X_{k}\ar[r]^{(d^X_{i})_*} & T[1]^{k-1}X_{k-1}.
}
$$
More precisely, for a test object $T \in \NMfd$, a collection of maps $a_I=(a_0,...,a_{k-1}):T\to T[1]^{k-1}X_k$, $b_J=(b_1,...,b_k):T\to T[1]^kX_{k-1}$ satisfying the following equations lift to a unique map $T\to A\times_C B$: 
\begin{itemize}
    \item[(a)] $(s^D_{i'})^*a_i=(s^D_i)^*a_{i'+1}$, for $0\leq i\leq i'\leq k-2$,
    \item[(b)] $(d^X_{j'})_*b_j=(d^X_{j-1})_*b_{j'}$, for $1\leq j'<j\leq k$,
    \item[(c)] $(d^X_j)_*a_i=(s^D_i)^*b_j$, for $j\in J=\{1,...,k\}, i\in I=\{0,...,k-1\}$.
\end{itemize}
Conditions (a), (b) and (c) correspond to the first, second, and third types of diagrams respectively, and the range of the indices in these diagrams are indicated in the corresponding conditions. 
\end{lemma}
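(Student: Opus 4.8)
The plan is to read off the universal property of the fibre product $A\times_C B$ and match it against that of the limit of the three displayed diagram families, using the presentations of $A$, $B$ and $C$ as limits that are already at our disposal. Throughout we keep the compatible connections on $T^*X_k$ and $T^*\Horn{k}{0}(X)$ fixed before Lemma~\ref{lem:ss}, so that every object in sight is a split N-manifold and $A\times_C B$ is an N-manifold by Lemma~\ref{lem:ss}. First I would record three building blocks. By Lemma~\ref{lem:key} with $M=X_k$, the N-manifold $A=T[1]^kX_k/T[k]X_k$ is the limit of the diagrams \eqref{diag:A} for $X_k$, with universal cone the strict embedding $(\sigma_0,\dots,\sigma_{k-1})\colon A\hookrightarrow (T[1]^{k-1}X_k)^{[k-1]}$; thus a $T$-point $\alpha$ of $A$ is the same datum as a tuple $a_i:=\sigma_i\circ\alpha$, $i\in[k-1]$, satisfying (a). By the lemma of Section~\ref{sec:B}, $B=T[1]^k\Horn{k}{0}(X)$ is the limit of the $T[1]^k$-image of the horn diagrams, with universal cone the monomorphism $(T[1]^kd^X_j)_{j}\colon B\hookrightarrow (T[1]^kX_{k-1})^{[k]\backslash 0}$ (Remark~\ref{rm:mono}); thus a $T$-point $\beta$ of $B$ is the same datum as a tuple $b_j:=(T[1]^kd^X_j)\circ\beta$, $j\in[k]\backslash 0$, satisfying (b). Finally, Lemma~\ref{lem:key} with $M=\Horn{k}{0}(X)$ — a manifold by the result cited in Remark~\ref{remark:1-2} — presents $C=T[1]^k\Horn{k}{0}(X)/T[k]\Horn{k}{0}(X)$ as the limit of the diagrams \eqref{diag:A} for $\Horn{k}{0}(X)$, and post-composing each factor of its cone with $T[1]^{k-1}$ of the horn embedding $\Horn{k}{0}(X)\hookrightarrow X_{k-1}^{[k]\backslash 0}$ (a monomorphism, by the argument of Remark~\ref{rm:mono}) yields a monomorphism $C\hookrightarrow (T[1]^{k-1}X_{k-1})^{[k-1]\times([k]\backslash 0)}$.

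Next I would identify the two legs of the fibre product in these coordinates. By the universal property of $A\times_C B$, a $T$-point is a pair $(\alpha,\beta)$ of $T$-points of $A$ and $B$ with $p\circ\alpha=q\circ\beta$ in $C$, where $p=(p^k_0)_*$ and $q$ is the quotient map; since the embedding of $C$ just constructed is a monomorphism (Remark~\ref{rm:strict}), this equality is equivalent to the equality of all $(i,j)$-components. On the $A$-side, naturality of the construction of Lemma~\ref{lem:key} under the simplicial map $p^k_0$ gives $\sigma^C_i\circ p=T[1]^{k-1}(p^k_0)\circ\sigma^A_i$, and combined with $d^X_j\circ p^k_0=d^X_j$ for $j\ge 1$ this makes the $(i,j)$-component of $p\circ\alpha$ equal to $(d^X_j)_*\,a_i$. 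On the $B$-side, the fact that $\sigma^C_i$ descends from $\sigma_i$ on $T[1]^k\Horn{k}{0}(X)$, together with the naturality of $\sigma_i$ under $d^X_j$ — both $\sigma_i$ and the $(s^D_i)^*$ being induced by operations on the $D$-slots, hence commuting with maps on the $X$-side — makes the $(i,j)$-component of $q\circ\beta$ equal to $(s^D_i)^*\,b_j$. Hence $p\circ\alpha=q\circ\beta$ is equivalent to condition (c). Putting the blocks together: a cone over the three diagram families with apex $T$ is determined by its legs $(a_i)$ and $(b_j)$ — the legs into $T[1]^{k-2}X_k$, $T[1]^kX_{k-2}$ and $T[1]^{k-1}X_{k-1}$ being forced composites — and commutativity of the diagrams is precisely the conjunction of (a), (b), (c); by the above this is the same datum as a pair $(\alpha,\beta)$ with $p\alpha=q\beta$, i.e.\ a $T$-point of $A\times_C B$, and the correspondence is natural in $T$. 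This is the asserted identification of $A\times_C B$ with the limit in $\NMfd$.

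The main obstacle is the bookkeeping of the middle paragraph: pinning down $p$ and $q$ in coordinates and checking that the coherence they impose, after the passage from $T[1]^k\Horn{k}{0}(X)$ to the quotient $C$, is exactly (c) with the stated index ranges — neither losing the relations among top-degree components that get absorbed into $T[k]\Horn{k}{0}(X)$ (which is precisely why $C$, and not $B$, is the base of the fibre product), nor picking up spurious ones. Keeping straight the conventions — that $\sigma_0$ is the $\star$-slot inclusion and not a genuine degeneracy, and that we work with $\Hom(\star\times D^\bullet,-)$ rather than with $\Hom(D^\bullet,-)$ as in \eqref{eq:pair-D}, so that "$(s^D_i)^*$" for $i\in[k-1]$ really denotes the $k$ maps $\sigma_0,\dots,\sigma_{k-1}$ — is the fiddly part of that step.
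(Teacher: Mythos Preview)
Your proposal is correct and follows essentially the same route as the paper: you use the limit presentation of $A$ from Lemma~\ref{lem:key} (for $M=X_k$), the limit presentation of $B$ from Section~\ref{sec:B}, and then reduce the compatibility $p\alpha=q\beta$ to condition~(c) by passing through the monomorphism $(T[1]^{k-1}i_{k,0})^{[k-1]}\circ i_C$ into $(T[1]^{k-1}X_{k-1})^{[k-1]\times([k]\backslash 0)}$. The paper packages the ``fiddly'' bookkeeping you flag in your last paragraph into a single $3\times 3$ commutative diagram and an explicit chase, but the ingredients and the logic are identical to yours.
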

\begin{proof}
Given a test object $T$ with a collection of maps $a_I, b_J$ with $I=\{0, \dots, k-1\}$ and $J=\{1, \dots, k\}$,  condition (a) and (b) are the conditions for $a_I: T \to (T[1]^{k-1}X_k)^{[k]\backslash 0}$ and $b_J: T\to (T[1]^kX_{k-1})^{[k-1]} $ to admit lifts to $a: T\to A$ and $b: T\to B$ respectively. We observe the following diagram:
\begin{equation}
    \xymatrix{
A\times_C B \ar[rr]^{pr_A}  \ar[d]^{pr_B}
&& B \ar[rr]^{i_B} \ar[d]^q
&& (T[1]^kX_{k-1})^{[k]\backslash 0} \ar[d]\\
A \ar[rr]^p \ar@{^{(}->}[d]^{i_A}
&& C \ar[rr] \ar@{^{(}->}[d]^{i_C}
&& (T[1]^kX_{k-1}/T[k]X_{k-1})^{[k]\backslash 0} \ar[d]\\
(T[1]^{k-1}X_k)^{[k-1]} \ar[rr]
&& (T[1]^{k-1}\Horn{k}{0}(X))^{[k-1]} \ar@{^{(}->}[rr]^{\scaleobj{0.8}{(T[1]^{k-1}i_{k,0})^{[k-1]}}}
&& (T[1]^{k-1}X_{k-1})^{[k-1]\times [k]\backslash 0}\\
}
\end{equation}
Here, $i_A$ and $i_C$ are the strict embeddings described in Section\ref{sec:key-A}, taking $M$ being $X_k$ and $\Horn{k}{0}(X)$ respectively\footnote{Notice that $(s^D_i)^*$ is exactly $\sigma_i$ therein.}; $T[1]^{k-1}i_{k,0}$ is the monomorphism  described in Section\ref{sec:B}, the upper-left square is a pullback diagram\footnote{To simplify the notation, various $T[1]^k d_j^X$ or $T[1]^{k-1} d_j^X$ therein are simply denoted by $(d_j^X)_*$.}; and the remaining three squares are commutative diagrams. The composition of two bottom maps is $[k-1]$-tuples of $(d^X_J)_*$ and the composition of the two vertical maps on the most right is $[k]\backslash 0 $-tuples of $(s^D_I)_*$. Thus, we have 
$$ {(s^D_I)^*}^{[k]\backslash 0}\circ  i_B\circ  b = {(s^D_I)^*}^{[k]\backslash 0}\circ b_J = (d^X_J)_*^{[k-1]} \circ a_I = (d^X_J)_*^{[k-1]} \circ i_A \circ a,  $$ implied by condition (b), (c), (a) sequentially. Chasing through the above diagram, we have  
\begin{equation}\label{eq:chasing-result}
    (T[1]^{k-1} i_{k, 0})^{[k-1]} \circ i_C \circ q\circ b = (T[1]^{k-1} i_{k, 0})^{[k-1]} \circ i_C \circ  p\circ a .
\end{equation}To show that $A\times_C B$ is the limit, we only need to show that $q\circ b = p\circ a$. Then  the starting data of $a_I, b_J$ give us unique $a, b$, which further satisfy $q\circ b = p\circ a$,  thus we have a unique lift $f: T\to A\times_C B$. 

Since we have \eqref{eq:chasing-result}, to show that $q\circ b = p\circ a$, it is sufficient to see that  $(T[1]^{k-1} i_{k, 0})^{[k-1]} \circ i_C$ is a monomorphism. The strict embedding $i_C$ is certainly a monomorphism in $\NMfd$ and $T[1]^{k-1}i_{k,0}$ is also a monomorphism in $\NMfd$  by Section\ref{sec:B},  thus  $(T[1]^{k-1} i_{k, 0})^{[k-1]} \circ i_C$ is a monomorphism. 
\end{proof}

Now we show the induction step in the following lemma:
\begin{lemma}\label{lem:ind-step}
Assume that $H^{k-1}$ is representable as a graded manifold $\mathcal M$ over $X_0$. Then $H^k$ is representable as a graded manifold by $\mathcal M\times_{X_0} \ker T(p^k_{0})[k]$.
\end{lemma}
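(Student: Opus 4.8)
The plan is to run the pullback diagram \eqref{diag:hk} defining $H^k$ through the intermediate N-manifold $A\times_C B$ of Lemmas \ref{lem:ss} and \ref{lem:abc}, and then to split off the top-degree part. Write $R:=(T[1]^{k-1}X_k)^{[k-1]}\times (T[1]^kX_{k-1})^{[k]\backslash 0}$ for the lower-right corner of \eqref{diag:hk}, so that $H^k=H^{k-1}\times_R T[1]^kX_k$. First I would observe that the right leg $T[1]^kX_k\to R$ factors through $A\times_C B$: its $(s^D_I)^*$-component is exactly the tuple $(\sigma_0,\dots,\sigma_{k-1})$ of Lemma \ref{lem:key} with $M=X_k$, hence factors through the quotient onto $A=T[1]^kX_k/T[k]X_k$ (no $\sigma_i$-equation involves the top degree) followed by the strict embedding $i_A$; its $(d^X_J)_*$-component factors through $\Horn{k}{0}(X)\hookrightarrow X_{k-1}^{[k]\backslash 0}$, hence through $B=T[1]^k\Horn{k}{0}(X)$ followed by the monomorphism $T[1]^k i_{k,0}$ of Section \ref{sec:B}; and by the commuting diagram of Section \ref{sec:A-B} these two are compatible over $C$, so they assemble into the natural map $g\colon T[1]^kX_k\to A\times_C B$ of Lemma \ref{lem:ss}, with $h\circ g$ equal to the right leg, where $h:=(i_A\circ pr_A,\ T[1]^k i_{k,0}\circ pr_B)$.

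By the pasting law for pullbacks applied to $T[1]^kX_k\xrightarrow{g}A\times_C B\xrightarrow{h}R$, we get $H^k\cong P\times_{A\times_C B}T[1]^kX_k$ with $P:=H^{k-1}\times_R(A\times_C B)$. The second step is to identify $P$ with $\mathcal M=H^{k-1}$. For this I would check that the left leg $H^{k-1}\xrightarrow{((s^X_I)_*,(d^D_J)^*)\circ \huaF_{k-1}}R$ also factors through $A\times_C B$: by the $T$-point characterisation of Lemma \ref{lem:abc} it is enough to verify that the tuples $a_i:=(s^X_i)_*\huaF_{k-1}$ ($i\in I$) and $b_j:=(d^D_j)^*\huaF_{k-1}$ ($j\in J$) satisfy conditions (a), (b), (c) there. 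This is a direct bookkeeping: commuting the $(s^X)_*,(d^X)_*$ past the $(s^D)^*,(d^D)^*$ (they act in independent simplicial directions) reduces each of (a), (b), (c) to the simplicial identities \eqref{eq:face-degen} for $X_\bullet$ and $D_\bullet$ together with the defining relations of $H^{k-1}$, which on the relevant levels read $d^X_l f_{k-1}=f_{k-2}d^D_l$ and $s^X_l f_{k-2}=f_{k-1}s^D_l$. Now $h$ is a monomorphism --- it is built from the monomorphisms $i_A$ (a strict embedding) and $T[1]^k i_{k,0}$ together with the universal property of the pullback $A\times_C B$ --- so, the left leg factoring as $h\circ\gamma$, the fibre product $P=H^{k-1}\times_R(A\times_C B)$ collapses to $H^{k-1}$, with projection $\gamma$ to $A\times_C B$.

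It remains to compute $H^k\cong H^{k-1}\times_{A\times_C B}T[1]^kX_k$. By Lemma \ref{lem:ss} the map $g$ is the quotient of $T[1]^kX_k$ by its top-degree piece $\ker Tp^k_0[k]$; since $\ker Tp^k_0\subset TX_k$ is a subbundle and $T[1]^kX_k$ is split in degree $-k$, the sequence \eqref{eq:Ek-split} (with $E=\ker Tp^k_0$) splits and $T[1]^kX_k\cong (A\times_C B)\times_{X_k}\ker Tp^k_0[k]$ compatibly with $g$. Substituting and using associativity of fibre products, $H^k\cong H^{k-1}\times_{X_k}\ker Tp^k_0[k]$, where $H^{k-1}\to X_k$ is $\gamma$ followed by $A\times_C B\to X_k$. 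On bodies this map is the iterated degeneracy $X_0\hookrightarrow X_k$, and since $\mathcal M=H^{k-1}$ is an N-manifold its degree-zero functions are exactly $C^\infty(X_0)$, so the map of graded manifolds $H^{k-1}\to X_k$ factors through $X_0$. Base change, together with the fact that $\mathfrak S$ preserves limits (Prop. \ref{pro:S-limit}), then gives $H^k\cong \mathcal M\times_{X_0}\ker Tp^k_0|_{X_0}[k]$, which is the object in the statement (the bundle $\ker T(p^k_0)[k]$ there being understood as restricted to $X_0$); the isomorphism depends on the compatible connections on $T^*X_k$ and $T^*\Horn{k}{0}(X)$ fixed in Section \ref{sec:A-B} and on the splitting of \eqref{eq:Ek-split}.

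The main obstacle I expect is the middle step: checking that the left leg factors through $A\times_C B$, i.e. matching the compatibility equations (a), (b), (c) of Lemma \ref{lem:abc} against the simplicial identities and the conditions (H1)--(H4). This is formal but index-heavy, and one must be careful about the direction of each simplicial operator and about the $\pt\times D^\bullet$-restrictions that appear in (H3)--(H4). A secondary subtlety, handled at the very end, is bookkeeping the base of the fibre product, since $H^{k-1}$ lives over $X_0$ while $T[1]^kX_k$ and $A\times_C B$ live over $X_k$.
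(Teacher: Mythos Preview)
Your proposal is correct and follows essentially the same route as the paper: factor both legs of the pullback \eqref{diag:hk} through $A\times_C B$, verify conditions (a)--(c) of Lemma \ref{lem:abc} for the left leg via simplicial identities and the inductive description of $H^{k-1}$, use that $\iota$ (your $h$) is a monomorphism to collapse the diagram to $H^k\cong H^{k-1}\times_{A\times_C B}T[1]^kX_k$, and then split off $\ker Tp^k_0[k]$ and base-change to $X_0$. Your use of the pasting law and the intermediate object $P$ makes the diagram chase more explicit than the paper's one-line ``following a diagram chasing argument'', and your justification for why $H^{k-1}\to X_k$ factors through $X_0$ (degree-zero functions of an N-manifold are exactly $C^\infty$ of the body) fills in a step the paper leaves implicit in \eqref{eq:hk-k-1}.
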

\begin{proof}
Our starting point is the pullback diagram \eqref{diag:hk}. The right vertical map factors through $A\times_C B$ by Lemma \ref{lem:abc} and Lemma \ref{lem:ss}, 
\begin{equation}\label{eq:lift}
\xymatrixcolsep{15pc}\xymatrix{H^k \ar[r]^{\huaF_k} \ar[dd] & T[1]^{k}X_k  \ar@/^2.0pc/[dd]^{((s_I^D)^*, (d_J^X)_*) } \ar@{->>}[d]  \\
& A\times_C B \ar[d]^\iota \\
    H^{k-1} \ar[r]^-{((s_I^X)_*, (d_J^D)^*) \circ \huaF_{k-1}} \ar@{.>}[ur]^{\exists!} & (T[1]^{k-1}X_k)^{[k-1]}   \times  (T[1]^{k} X_{k-1})^{[k]\backslash 0}  }.
\end{equation}

We first show that the bottom map $H^{k-1}\to (T[1]^{k} X_{k-1})^{[k]\backslash 0}  \times (T[1]^{k-1}X_k)^{[k-1]}$  lifts to a unique map $H^{k-1}\to T[1]^kX_k/\ker(Tp^k_{0})[k]$. For this, we only need to verify that $a_i=(s^X_i)_*\circ \mathcal F_{k-1}$ and $b_j=(d^D_j)^*\circ \mathcal F_{k-1}$ satisfy condition (a), (b), (c).

For condition (a),  we look at the following diagrams, where the commutativity of the left square follows from the diagram \eqref{diag:hk} one level below, and that of the right square follows from simplicial identities.

$$
\xymatrix{
H^{k-1} \ar[d]^\pi \ar[r]^{\mathcal F_{k-1}}&
T[1]^{k-1}X_{k-1} \ar[r]^{(s^X_i)_*} \ar[d]^{(s^D_{i'})^*}&
T[1]^{k-1}X_k \ar[d]^{(s^D_{i'})^*}\\
H^{k-2} 
 \ar[r]^{(s^X_{i'})_* \circ \huaF_{k-2}}&
T[1]^{k-2}X_{k-1}\ar[r]^{(s^X_i)_*}&
T[1]^{k-2}X_k
}
$$
This means that $(s^D_{i'})^*a_i=(s^D_{i'})^*(s^X_i)_*\mathcal F_{k-1}=(s^X_i)_*(s^X_{i'})_*\pi$.
In particular, for $i\leq {i'}$:$$(s^D_{i'})^*a_i=(s^X_i)_*(s^X_{i'})_*\pi=(s^X_{{i'}+1})_*(s^X_i)_*\pi=(s^D_{i})^*a_{{i'}+1}.$$

Conditions (b) and (c) follow with a similar argument by simplicial identities and induction hypothesis. 

Secondly, we look at the composed map 
\begin{equation}
    \iota: A\times_C B \xrightarrow{pr_A\times pr_B} A\times B \xrightarrow{ i_A \times i_B} (T[1]^{k-1} X_k)^{[k-1]} \times (T[1]^kX_{k-1})^{[k]\backslash 0}.
\end{equation} By Section\ref{sec:B}, $i_B:= T[k] i_{k, 0}$ is a monomorphism,  and by Section\ref{sec:key-A}, $i_A$ is a strict embedding thus a also monomorphism. Moreover,  it is a general fact on fiber product that $A\times_C B \xrightarrow{ pr_A \times pr_B} A\times B$ is a monomorphism. We shortly recall the proof here: Let $g_1, g_2: T\to A\times_C B$ such that $pr_A \times pr_B \circ g_1= pr_A \times pr_B \circ g_2$. Then, in particular, $pr_A \circ g_1 = pr_A \circ g_2$ and the same for $pr_B$. Thus, $q\circ pr_A \circ g_1 = q\circ pr_A \circ g_2= p\circ pr_B \circ g_1 = p\circ pr_B \circ g_2$, thus there exists a unique map $g_1=g_2: T\to A\times_C B$ by the definition of fiber product. Thus, $\iota$ is a monomorphism. Following a diagram chasing argument, one can see that $H^k$ is also the fiber product of the smaller angled diagram, that is, 
\begin{equation} \label{eq:hk-iso}
    H^k\cong H^{k-1}  \times_{A\times_C B} T[1]^k X_k . 
\end{equation}
Notice that 
\begin{equation}
    T[1]^kX_k \cong T[1]^k X_k /\ker Tp^k_0[k] \times_{X_k} \ker Tp^k_0[k] \cong A\times_C B \times_{X_k} \ker Tp^k_0[k].
\end{equation}
Moreover, fiber products in $\NMfd$ still satisfies base change via a typical diagram chasing argument. Thus, we have 
\begin{equation} \label{eq:hk-k-1}
\begin{split}
       H_k &\cong H_{k-1} \times_{A\times_C B} T[1]^k X_k \cong H_{k-1} \times_{A\times_C B} A\times_C B \times_{X_k} \ker Tp^k_0[k] \\ 
       & \cong H_{k-1} \times_{X_k}  \ker Tp^k_0[k] \cong H_{k-1} \times_{X_0} X_0 \times_{X_k}  \ker Tp^k_0[k] = H_{k-1} \times_{X_0} \ker Tp^k_0[k]|_{X_0} 
\end{split}
\end{equation}
\end{proof}

Now Lemma \ref{lem:induction} follows:
\begin{proof}[Proof for Lemma  \ref{lem:induction}]
The case $k=0, 1$ are verified in Lemma \ref{lem:case0} and Section \ref{sec:k=1}. Now by induction hypothesis, we have already $H_{k-1} \cong \mathfrak S (\oplus_{i=1}^{k-1} Tp^i_0[i]|_{X_0} ) $, it is clear that $H_{k}\cong \mathfrak S( \oplus_{i=1}^{k} Tp^i_0[i]|_{X_0} )$ by Lemma \ref{lem:ind-step}. Notice that in the calculation, for simplicity, we sometimes omit $\mathfrak S$, namely, a graded vector bundle also means its represented graded manifolds. 
\end{proof}
This completes the proof of Theorem \ref{thm:ker-p}. 

\begin{remark}
It is clear that for each step when $k\ge 2$, we need to choose a pair of compatible connections on $TX_k$ and $T\Horn{k}{0}(X)$ (see Lemma \ref{lem:ss}). But we do not need any connection for $k=0, 1$. Thus, the splitting of $\huaT (X_\bullet)$ depends on a choice of compatible connections on  $TX_k$ and $T\Horn{k}{0}(X)$, and isomorphisms \eqref{eq:hk-iso}  for $k=2, \dots, n$. 
\end{remark}

\section{Examples}
\begin{example}[Lie groupoids]
In the case when $X_\bullet$ is the nerve of a Lie groupoid,  the horn projection $p^1_0: X_1 \to X_0$ is simply $d_1=\bs$ the source of the Lie groupoid. Thus, the tangent complex of a Lie groupoid $X_1 \rightrightarrows X_0$ is simply $\ker T\bs|_{X_0}$, the vector bundle of the Lie algebroid. This especially tells us the tangent complex of a Lie group $G$ is the Lie algebra $\mathfrak{g}$. 
\end{example}

In a forthcoming paper, we will demonstrate how to obtain (higher) Lie brackets on the tangent complexes. Now we focus on examples of tangent objects of various  Lie $n$-groupoids.

\begin{example}[Courant Lie 2-groupoid]
The integration of Courant algebroids is a hot topic. Various integration models for special cases are given in \cite{libland-severa1, MeTa11, MeTa18a, shengzhu3}. As the nature of a Courant algebroid is a 2-shifted symplectic Lie 2-algebroid, one expects a 2-shifted symplectic Lie 2-groupoid as the integration object. As the topic of this article is differentiation rather than integration, we do not look for a 2-shifted symplectic Lie 2-groupoid structure. However, we take the finite-dimensional model of a Lie 2-groupoid integrating a standard Courant algebroid, which we call a {\bf Courant Lie 2-groupoid}, and calculate its tangent object. Notice that the finite-dimensional models in the above literature are locally the same. The one we use here is from \cite{shengzhu3}. Given a manifold $M$, the Courant Lie 2-groupoid is 
\begin{equation}\label{eq:integration-gpd}
\xymatrix{X_2:(\Pi_1(M)\times_M\Pi_1(M))\times_{M^{\times 3}} (T^*M)^{\times 3}
  \ar@<.7ex>[r] \ar@<-.7ex>[r] \ar[r] & X_1:= \Pi_1(M)\times_{M} T^*M
  \ar@<.5ex>[r] \ar@<-.5ex>[r] & X_0:= M,   }
\end{equation}

\begin{center}
\begin{tikzpicture}
\draw (-2.2,0.8) node [label=right:\rm{\{}] {};

\draw (0,0) node [draw,circle,fill=black,minimum size=2pt, inner
sep=0pt] (0) [label=right:$x_0$] {}
 ++(0:2cm) node [draw,circle,fill=black,minimum size=2pt, inner
sep=0pt] (2) [label=right:$x_2$] {}; \draw (1,1.8) node
[draw,circle,fill=black,minimum size=2pt, inner sep=0pt] (1)
[label=left:$x_1$] {};

\draw[<-](0) to [out=80,in=200] node{$\gamma_{0,1}$}
(1);\draw[<-](0) to [out=20,in=160] node{$\gamma_{0,2}$} (2);
\draw[<-](1) to [out=-30,in=100] node{$\gamma_{1,2}$} (2);

 \draw[->]
(0) to (-0.8,0.3) node
 [label=left:$\xi^{0,1}$] {};\draw[->]
(0) to (-0.8,-0.3) node
 [label=left:$\xi^{0,2}$] {};
 \draw[->]
(1) to (1,2.6) node
 [label=left:$\xi^{1,2}$] {};
 \draw (2.2,0.8) node [label=right:\rm{\}}] {};

 \draw[->] (2.8,1) to (3.6,1);
 \draw[->] (2.8,0.8) to (3.6,0.8);
 \draw[->] (2.8,0.6) to (3.6,0.6);

\draw (3.4,0.8) node [label=right:\rm{\{}] {};

 \draw(4.5,0.5) node
[draw,circle,fill=black,minimum size=2pt, inner sep=0pt] (0)
[label=left:$x_0$] {}
 ++(0:2cm) node [draw,circle,fill=black,minimum size=2pt, inner
sep=0pt] (1) [label=right:$x_1$] {}; \draw[<-](0) to [out=20,in=160]
node{$\gamma$} (1); \draw[->] (0) to (4.5,1.3) node
 [label=left:$\xi$] {};

 \draw(6.8,0.8) node [label=right:\rm{\}}] {};

\draw[->] (7.3,0.9) to (8.1,0.9);
 \draw[->] (7.3,0.7) to (8.1,0.7);

 \draw(8.5,0.8) node [draw,circle,fill=black,minimum size=2pt, inner
sep=0pt] (0) [label=right:\rm{\}}][label=left:\rm{\{}]{};
\end{tikzpicture}
\end{center}
where $\Pi_1(M)=\tilde{M}\times \tilde{M}/\pi_1(M)$ is the
fundamental groupoid of $M$ with $\tilde{M}$ the simply connected
cover  of $M$. 

We notice that $p^1_0: \Pi_1(M) \times_M T^*M \to M $ is given by $(\gamma, \xi) \mapsto \gamma(0)$. We have $\ker Tp^1_0|_M=T^*M \oplus TM$. Similarly, $p^2_0: X_2 \to \Horn{2}{0}(X)$ is given by
\[ (\gamma_{1,2}, \gamma_{0,1}, \xi^{1,2}, \xi^{0,1}, \xi^{0,2}) \mapsto (\gamma_{0, 1}, \gamma_{0,1}\cdot \gamma_{1,2}, \xi^{0,1}, \xi^{0,2}). 
\] Thus, $\ker Tp^2_0|_M = T^*M$. Thus, we obtain $T[1]M\oplus T^*[1]M \oplus T^*[2]M$, which is the underlying graded vector bundle of a standard Courant algebroid, as the tangent of the Courant Lie 2-groupoid.

\end{example}

\begin{example}[Semi-strict Lie 2-groupoids]
The above example of a Courant Lie 2-groupoid is a special case of a semi-strict Lie 2-groupoid. A Lie 2-groupoid can be viewed as \cite{z:tgpd-2} a groupoid object in
 the 2-category \GpdBibd\; where the space of objects is only a manifold (but not a general Lie groupoid). Here \GpdBibd\; is the 2-category with Lie groupoids as objects,
  Hilsum-Skandalis (HS) bimodules as morphisms, and isomorphisms of HS bimodules as 2-morphisms.  A \emph{semistrict Lie 2-groupoid} is a particular Lie 2-groupoid, which is a groupoid object in the
  2-category of \Gpd \; with  the space of objects a manifold,  where
  \Gpd \; is a sub-2-category of \GpdBibd \; containing only strict groupoid morphisms as morphisms. 
In particular, this means that a semistrict Lie 2-groupoid consists of a Lie groupoid $\huaG:=G_1 \underset{\bt_G}{\overset{\bs_G}{\Rightarrow}} G_0$ over a manifold $M$ with $\bs, \bt: G_0 \Rightarrow M$, such that there is an additional horizontal multiplication morphism $m: \huaG \times_{M} \huaG \to \huaG $ of Lie groupoids, which is associative up to a 2-morphism called an associator. For the complete definition, we refer to \cite[Definition 5.2]{shengzhu3}. 

The Lie 2-groupoid $X_\bullet$ described using the simplicial language corresponding to the semistrict Lie 2-groupoid $\huaG \Rightarrow M$ is given by 
\[X_0= M, \quad X_1 = G_0, \quad X_2 = \left(X_1 \times_{s,X_0,t} X_1\right) \times_{m_0, X_1, \bt_G} G_1, 
\]where $m_0$ is the part of $m$ on the level of objects. We refer to \cite[Eq.~(19) Section4.1]{z:tgpd-2} for face and degeneracy maps. Then $p^1_0=\bs: X_1 \to X_0$. Thus, $\ker Tp^1_0|_{X_0} = \ker T\bs|_M$. Moreover, 
\[
p^2_0: X_2= \left(X_1 \times_{\bs,X_0,\bt} X_1\right) \times_{m_0, X_1, \bt_G} G_1 \to X_1 \times_{X_0} X_1, \quad (\gamma_{12}, \gamma_{01}, \xi) \mapsto (\gamma_{01}, \bs_G(\xi) ).
\]
Here we use some notation inspired by the calculation in the last example. Since $\bt_G (\xi) = m_0(\gamma_{01}, \gamma_{12})$, when we compare the information on the left-hand side with that on the right-hand side, we obtain that $\ker Tp^2_0|_M = \ker T\bs_G|_M$. Here we use the embedding  $M \xrightarrow{s_0} G_0=X_1 \xrightarrow{s_0} X_2$. Thus, we obtain $$\ker T\bs[1]|_M \oplus \ker T\bs_G[2]|_M, $$ 
as the tangent object of a semistrict Lie 2-groupoid.
\end{example}

\begin{example}[Strict Lie 2-groups and crossed modules of Lie groups]
A crossed module $H \xrightarrow{\partial} G$ gives rise to a strict Lie 2-group whose underlying Lie groupoid is $G\times H \Rightarrow G$, which is a special example of semistrict Lie 2-groupoid with $M=pt$. Thus, the previous example especially implies that its tangent is 
\[\g[1]\oplus \mathfrak h[2].
\]
\end{example}

\begin{example}[general central extensions for Lie 2-groups]
Let $G$ be a Lie group, $\mu:A\to B$ a homomorphism of abelian Lie groups. We consider the simplicial manifold $\mathcal BG$, which is the nerve of the groupoid $G\rightrightarrows *$ .  Let $(U^{(n)}_i)_{i\in I^{(n)}}$ be a simplicial covering of $\mathcal BG$ in the sense of {\cite{WoZh16}} and $\phi\in {Z}^3_U(\mathcal BG,A\to B)$. We will follow the construction  of \cite{WoZh16} of a stacky Lie groupoid central extension corresponding to cocycle $\phi$ and then \cite{z:tgpd-2} to turn the stacky construction into a Lie 2-group $\huaG$ in the simplicial formalism as in Def. \ref{def:lie-n-gpd}.
\begin{equation} \label{eq:central-ext-G}
    1 \to (A\to B) \to \huaG \to G \to 1
\end{equation}

In fact, given a stacky 2-group $(\Gamma_1\underset{\bs}{\overset{\bt}{\rightrightarrows}} \Gamma_0)\rightrightarrows *$, the corresponding simplicial 2-group is given by $X_0=*$, $X_1=\Gamma_0$ and $X_2=E_m$, the bibundle representing the multiplication of $\Gamma$. In our case $\Gamma=\Gamma[\phi]$ is given by $\Gamma_0= U^{(1)}_{[0]}\times B$ and   $\Gamma_1= U^{(1)}_{[1]}\times B\times A$, where $U^{(1)}_{[k]}$ denotes all possible {$k$-fold intersections} of $U^{(1)}$  
$$\bigcap_{j=0}^{k-1} U^{(1)}_{i_j}=U^{(1)}_{i_0}\times_G ...\times_GU^{(1)}_{i_{k-1}}.$$
We have source and target maps $\bs(u,b,a)=(\pi_0u,b)$ and $\bt(u,b,a)=(\pi_1u,b)$ of $\Gamma$. The zigzag giving rise to the multiplication is $\Gamma\times \Gamma\leftarrow \Gamma^2\rightarrow \Gamma$, where $\Gamma^2$ is the groupoid
$$
\Gamma^2_1= U^{(2)}_{[1]}\times B^2\times A^2\rightrightarrows \Gamma^2_0=U^{(2)}_{[0]}\times B^2
$$
with analogous source and target maps. The bibundle $E_m$ then is given by 
$$((\Gamma_1\times \Gamma_1)\times_{t,\Gamma_0\times \Gamma_0}\Gamma^2_0\times_{\Gamma_0,t}\times \Gamma_1)/\Gamma^2_1.$$ We need to specify the groupoid morphisms from $\Gamma^2$. As a map from $\Gamma^2_1$ to $(\Gamma\times \Gamma)_1=\Gamma_1\times\Gamma_1$ we have:
$$(v_0,v_1,b_0,b_1,a_0,a_1)\mapsto (d_0v_0,d_0v_1,b_0,a_0)\times (d_2v_0,d_2v_1,b_1,a_1).$$ 
The map $\Gamma^2_1\to \Gamma_1$ is given by $$(v_0,v_1,b_0,b_1,a_0,a_1)\mapsto (d_1v_0,d_1v_1,b_0+b_1+ \phi^{2,0,0}(v_0), a_0+a_1+ \phi^{2,1,-1}(v_0,v_1)).$$
 On the base $\Gamma^2_0$ the maps are  $(v_0,b_0,b_1)\mapsto (d_0v_0,b_0)\times (d_2v_0,b_1)$ to and $(v_0,b_0,b_1)\mapsto (d_1v_0,b_0+b_1+ \phi^{2,0,0}(v_0))$respectively. Hence $$E_m=[((U_{[1]}^{(1)})^{3}\times_{(U_{[0]}^{(1)})^{3}} U^{(2)}_1)/U_{[1]}^{(2)}] \times B^2\times A.$$
Since the first factor is just a cover of a small region in $G\times G$, this means that $ker(Tp^2_0)|_{X_0}=T_eA$. Since $X_0=*$, we also have $ker(Tp^1_0)|_{X_0}=T_eG\oplus T_eB$. Thus, the tangent object for the central extended Lie 2-group $\huaG$ is
\[
\g[1]\oplus \mathfrak b [1] \oplus \mathfrak a[2],
\]where $\g$, $\mathfrak b$ and $\mathfrak a$ are the Lie algebra of $G$, $B $ and $A$ correspondingly. 
\end{example}

\begin{example}[String Lie 2-group]
A famous sub-example of the above is the case of String Lie 2-group $\String(G)$. Given $G=Spin(n)$, or in general (see \cite[Theorem 1.47, 1.81]{tanre}) a compact, connected and simple Lie group $G$, one can form its associated String Lie 2-group $\String(G)$, which is a central extension \[
1\to \huaB S^1 \to \String(G) \to G \to 1,
\] of Lie 2-groups in the sense of \cite{schommer:string-finite-dim}, with extension class the generator of $H^2(\huaB G, \huaB S^1)= H^3(\huaB G, S^1)=H^4(\huaB G, \Z)=\Z$. It fits into the last example with $A=S^1$ and $B=\{1 \}$. Thus, the tangent object for $\String(G)$ is
\[
\g[1] \oplus \R[2].
\]
\end{example}

\begin{example}[$n$-tower]
Given a complex of Lie groups
\begin{equation}
  G \xleftarrow{q_2} \Pi_2 \xleftarrow{q_3} \Pi_3 \xleftarrow{q_4} \dots \xleftarrow{q_n} \Pi_n,
\end{equation}
together with  $G$-actions $\alpha_i: G\to \Aut(\Pi_i)$ such that the $q_i$'s are $G$-equivariant with respect to the conjugation action of $G$ on $G$ itself, we now construct a Lie $n$-group inductively with the help of $n-1$ cocycles $c_i$'s which will be defined inductively too. We call these data an $n$-tower, which shows up in the construction of a sigma model for topological orders \cite{Lan-Zhu-Wen:2019}. 

The first cocycle $c_3\in Z^3(G, (\Pi_2^0)^{\alpha_2})$ is a smooth normalized 3-cocycle of $G$ with coefficients  in $\Pi_2^0:=\ker q_2$ with action $\alpha_2$, that is, $c_3: G^{\times 3} \to \Pi_2^0$ is a smooth map, and 
\[
\delta_{\alpha_2} c_3 (g_1, g_2, g_3, g_4) = \alpha_2(g_1) c_3(g_2, g_3, g_4) - c_3(g_1 g_2, g_3, g_4) + c_3(g_1, g_2 g_3, g_4) - c_3(g_1, g_2, g_3 g_4) + c_3(g_1, g_2, g_3) =0.
\]
With $c_3$, we can build up a Lie 2-group $\huaG_{c_3}(G, \Pi_2)$ as in \cite[Section8]{baez:2gp}. This Lie 2-group written in the simplicial Kan complex $X^{(2)}_\bullet$ has the form: $X^{(2)}_0=pt, X^{(2)}_1 = G$,  
\[
\begin{split}
    X^{(2)}_2 := \{ &(x^1_{01}, x^1_{02}, x^1_{12}; x^2_{012})| x^1_{..}\in G, x^2_{012}\in \Pi_2,  x^1_{01} x^1_{12} (x^1_{02})^{-1} = q_2(x^2_{012})\} \cong  G\times G \times \Pi_2, \\
    X^{(2)}_3:=\{ &(x^1_{01}, x^1_{02}, \dots, x^1_{23}; x^2_{012}, \dots, x^2_{123})| x^1_{..}\in G, x^2_{..}\in \Pi_2, \\
     & d x^1 = q_2(x^2), d_{\alpha_2} x^2 = c_3(x^1).  \} \cong  G^{\times (^3_1)} \times \Pi_2^{\times (^3_2)},
\end{split}
 \]
and higher levels $X^{(2)}_m \cong G^{\times m} \times \Pi_2^{\times (^m_2)}$ are determined by taking the coskeleton functor \cite[Section2.3]{z:tgpd-2}. Here $(dx^1)_{abc}:= x^1_{ab}x^1_{bc}(x^1_{ac})^{-1}$ and
\begin{equation}\label{eq:d-alpha-3-x}
    (d_{\alpha_2} x^2)_{abcd} := \alpha_2(x^1_{ab}) x^2_{bcd} - x^2_{acd}+x^2_{abd}-x^2_{bcd}
\end{equation}

Inductively, the second cocycle $c_4 \in Z^4(\huaG_{c_3}(G, \Pi_2), (\Pi_3^0)^{\alpha_3}) $ is a smooth normalized\footnote{in the sense that its evaluation on degenerate simplices is 0. } 4-cocycle of the Lie 2-group $\huaG_{c_3}(G, \Pi_2)$ with coefficients  in $\Pi_2^0:=\ker d_2$ with action $\alpha_3$, where the smooth Lie 2-group cochain complex is the one of its simplicial nerve. More precisely,   $c_4: X^{(2)}_4 \to \Pi_3^0$ is a smooth map, and 
\[
\delta_{\alpha_3} c_4 (x) = \alpha_3(g) c_4(d_0(x))+ \sum_{j=1}^4 (-1)^j c_4(d_j(x)) ,
\]where $g= d_2\circ d_3 \circ d_4 (x)$. With $c_4$, we build a Lie 3-group $\huaG_{c_4}(G, \Pi_2, \Pi_3)$ whose simplicial Kan complex $X^{(3)}_\bullet$ has the form: $X^{(3)}_0=X^{(2)}_0, X^{(3)}_1 = X^{(2)}_1$, $X^{(3)}_2 = X^{(2)}_2$, and
\[
\begin{split}
    X^{(3)}_3 := \{ &(x^1_{01}, x^1_{02}, \dots, x^1_{23}; x^2_{012}, \dots, x^2_{123}; x^3_{0123}): x^1_{..}\in G, x^j_{..}\in \Pi_j, j=2, 3, \\
     & d x^1 = q_2(x^2), d_{\alpha_2} x^2 = q_3(x^3) +c_3(x^1).  \} \cong  G^{\times (^3_1)} \times \Pi_2^{\times (^3_2)} \times \Pi_3^{\times (^3_3)}, \\
    X^{(3)}_4 := \{ &(x^1_{01}, x^1_{02}, \dots, x^1_{34}; x^2_{012}, \dots, x^2_{234}; x^3_{0123}, \dots, x^3_{1234}): x^1_{..}\in G, x^j_{..}\in \Pi_j, j=2, 3, \\
     & d x^1 = q_2(x^2), d_{\alpha_2} x^2 = q_3(x^3) +c_3(x^1), d_{\alpha_3}x^3= c_4(x^1;x^2).  \} \cong  G^{\times (^4_1)} \times \Pi_2^{\times (^4_2)} \times \Pi_3^{\times (^4_3)}, 
\end{split}
 \]
where $d_{\alpha_3} x^3$ is a similar alternating sum in $\Pi_3$ as \eqref{eq:d-alpha-3-x}. This construction continues and it gives us inductively a Lie 2-group $X^{(2)}_\bullet$, a Lie 3-group $X^{(3)}_\bullet$, $\dots$, and a Lie $n$-group $X^{(n)}_\bullet$ (see \cite[Appendix L]{Lan-Zhu-Wen:2019} and \cite{daniel-thesis} for details). At the end of the day, we have
\begin{equation}
    \begin{split}
        X^{(n)}_m= & G^{\times (^m_1)} \times \Pi_2^{(^m_2)} \times \dots \times \Pi_m^{(^m_m)}, \quad \text{when} \; m\le n, \\
        X^{(n)}_m= & G^{\times (^m_1)} \times \Pi_2^{(^m_2)} \times \dots \times \Pi_n^{(^m_n)}, \quad \text{when} \; m\ge n. 
    \end{split}
\end{equation}
For $m\le n$, after removing the only $m$-simplex in $X^{(n)}_m$, the horn space is
\begin{equation}
    \Horn{m}{0}(X^{(n)}_\bullet) = G^{\times (^m_1)} \times \Pi_2^{(^m_2)} \times \dots \times \Pi_{m-1}^{(^m_{m-1})}, 
\end{equation}
containing $m$ $(m-1)$-simplices and $p^m_0$ is simply the projection using the last equation in $X^{(n)}_m$. Thus, $\ker Tp^m_0 = \mathfrak a_m$, where $\mathfrak{a}_m$ is the Lie algebra of $\Pi_m$ when $m\ge 2$ and $\ker Tp^1_0 = \mathfrak g$. Therefore, the tangent of $X^{(n)}_\bullet$ is $\g[1] \oplus \mathfrak{a}_2[2] \oplus \dots \oplus \mathfrak a_m[m]$. 
\end{example}

\begin{example}[strict extensions for Lie $n$-groups]
As pointed out in \cite{severa:diff},  similar to \cite[Section8]{baez:2gp} where a 3-cocycle is used to build a 2-group, if $n\ge 3$ and we are given a Lie group $G$, an abelian Lie group $\Pi$, a smooth $n+1$-cocycle $c: G^{n+1}\to \Pi$, and an action $\alpha: G\to \Aut(\Pi)$, we can build a Lie $n$-group $X_\bullet$, which may be viewed as an extension $G\ltimes_{c} \Pi$ of $G$ by $\Pi$,  with  
\[
X_m = G^{\times m}, \quad \text{when}\; m\le n-1, \quad X_m  = G^{\times m} \times \Pi^{\times (^m_n)} \quad \text{when}\; m\ge n. 
\]In fact, this is a special example of the above one when $\Pi_2, \dots, \Pi_{n-1}$ are 0 and $\Pi_n=\Pi$. Thus, its tangent is $\g[1]\oplus \mathfrak a[n]$ where $\mathfrak a$ is the Lie algebra of $\Pi$.  
\end{example}

\begin{example}[simplicial Lie groups and their simplicial classifying complex \cite{Jurco}]
Apart from the $n$-tower, there is a much more classical generalization of crossed modules, namely simplicial groups. For the purpose of differentiation, we are interested in a simplicial Lie group  $G_\bullet$, which is a group object in the category of simplicial manifolds. That is, each level $G_i$ is a Lie group and all the face and degeneracy maps are Lie group morphisms. 

A simplicial group $G_\bullet$ always satisfies the Kan conditions. This is proven in the set-theoretic setting in some classical literature \cite[Theorem 3]{Moore56}, and we refer to \cite{nlabkan} for a more complete history of the literature. The proof is to construct degenerate horn fillings explicitly, that is, for a horn $\lambda \in \Horn{k}{i}(G_\bullet)$, one constructs a simplex $g_k \in G_k$ such that $p^k_i(g_k)=\lambda$. The construction basically uses only a combination of face and degeneracy maps and multiplication in $G_\bullet$, thus it survives differential geometry.  Namely, if $G_\bullet$ is further a simplicial Lie group, for a smooth curve $\lambda(t) \in \Horn{k}{i}(G_\bullet)$, we have a smooth curve $g_k(t) \in G_k$ lifting it. Thus, for any preimage $g'_k \in G_k$ of $\lambda$ under $p^k_i$, $g'_k g_k^{-1} g_k(t)$ is a smooth curve    lifting $\lambda(t)$. Thus, a simplicial Lie group $G_\bullet$ satisfies Kan conditions with surjective submersions as in Def. \ref{def:lie-n-gpd}, namely $G_\bullet$ is an $L_\infty$-groupoid. Notice that $G_\bullet$ is usually not an $L_\infty$-group unless $G_0=\{1\}$. 

For a simplicial Lie group $G_\bullet$, its associated {\em Moore complex} (or normalized chain complex) is a $\Z^{\ge 0}$-graded chain complex  $((NG)_\bullet, \partial_\bullet)$  of (possibly non-abelian) groups with
\[
(NG)_k= \cap_{i=1}^k \ker d^k_i, \quad \partial_k= d^k_0: (NG)_k \to (NG)_{k-1}. 
\]
Let $\mathfrak n_k$ be the Lie algebra of $(NG)_k$. One can calculate that $\ker Tp^k_0|_{G_0}=\underline{\mathfrak n}_k$, which is a vector bundle over $G_0$ with fiber  $\mathfrak n_k$. Thus, the tangent object for $G_\bullet$ is
\[
\underline{\mathfrak n_1} [1] \oplus \underline{\mathfrak n_2} [2] \oplus \dots \oplus  \underline{\mathfrak n_k} [k] \oplus \dots .
\]
This result is also calculated in \cite[Remark 4.11]{cech:2016}. 

Associated to a simplicial group $G_\bullet$, its simplicial classifying complex $\bar{W}G$ is given by 
\[
\bar{W}G_k= G_{k-1}\times \dots \times G_0.
\]
We should imagine an element of $\bar{W}G_k$ as a sequence of arrows --- which are elements in $G_i$ between $x_j$'s, which are in turn all $1\in G_0$: 
\[
 \begin{tikzcd}
x_0  & x_1 \arrow[l, "g_{k-1}"', bend right] & x_2 \arrow[l, "g_{k-2}"', bend right] & \dots & x_{k-2} & x_{k-1} \arrow[l, "g_1"', bend right] & x_k \arrow[l, "g_0"', bend right]. 
\end{tikzcd}
\]We denote the face maps of $\bar{W}G_\bullet$ by $\bar{d}^k_i$, and they correspond to deleting $x_i$ in the above picture, that is
\[
    \bar{d}^k_i(g_{k-1}, g_{k-{2}}, \dots, g_1, g_0) = \begin{cases}
    (g_{k-2}, \dots, g_0), \quad i=0 \\
    (d_0(g_{k-1}) g_{k-2}, g_{k-3}, \dots, g_0), \quad i=1 \\
    \dots \\
    (d_{i-1}(g_{k-1}) , \dots, d_1(g_{k-i+1}), d_0(g_{k-i}) g_{k-i-1}, g_{k-i-2}, \dots, g_0) \\
    \dots & \\
    (d_{k-1}(g_{k-1}) , \dots, d_1(g_{1})), \quad i=k. 
\end{cases}
\]
Thus, we have
\begin{equation}\label{eq:ker-d-i}
\begin{split}
    \ker T\bar{d}^k_1 = & \{ (\delta g_{k-1}, \delta g_{k-2}, \dots, \delta g_0) | d_0(g_{k-1}) \delta g_{k-2}+ Td_0 (\delta g_{k-1})g_{k-2}=0, \delta g_{k-3} = \dots = \delta g_0= 0\} \\
    \ker T\bar{d}^k_2 = & \{ (\delta g_{k-1}, \delta g_{k-2}, \dots, \delta g_0) | Td_1(g_{k-1})=0,   Td_0 (\delta g_{k-2})g_{k-3} + d_0(g_{k-2}) \delta g_{k-3}=0, \delta g_{k-4} = \dots = \delta g_0= 0\} \\
    \dots \\
    \ker T\bar{d}^k_i = & \{ (\delta g_{k-1}, \delta g_{k-2}, \dots, \delta g_0) | Td_{i-1}(\delta g_{k-1})=0, Td_{i-2} (\delta g_{k-2})=0, \dots,  Td_0(\delta g_{k-i}) g_{k-i-1} + d_0(g_{k-i}) \delta g_{k-i-1} = 0,\\& \delta g_{k-i-2} = \dots = \delta g_0= 0\} \\
    \dots \\
    \ker T\bar{d}^k_k = & \{ (\delta g_{k-1}, \delta g_{k-2}, \dots, \delta g_0) | Td_{k-1}(\delta g_{k-1})=0, Td_{k-2} (\delta g_{k-2})=0, \dots,  Td_1(\delta g_1)=0 \}, 
\end{split}    
\end{equation}
where $\delta g_l$ denotes a tangent vector in $T_{g_l} G_l$ for all $l$. 
We denote the horn projection of $\bar{W}G_\bullet$ by $\bar{p}^k_j$. Then, to calculate $\ker T\bar{p}^k_0 =\cap_{i=1}^k \ker T\bar{d}_j$, we basically need to solve all the equations in the description of sets in \eqref{eq:ker-d-i}. It can be seen that $\delta g_{k-1} \in \cap_{j=1}^{k-1} \ker Td^{k-1}_j = \mathfrak{n}_{k-1}$, and $\delta g_{k-3} = \dots = \delta g_0=0$. Moreover,  $\delta g_{k-3} = \dots = \delta g_0=0$ makes all the equations proposed on $\delta g_{k-3}, \dots, \delta g_0$ trivially satisfied. From $\ker T\bar{d}_1$,  we observe that $\delta g_{k-2}$ is dertermined by $\delta g_{k-1}$ in the following way, 
\begin{equation} \label{eq:g-k-2}
    \delta g_{k-2} = d_0(g_{k-1})^{-1} Td_0 (\delta g_{k-1}) g_{k-2}.
\end{equation}Now we verify that \eqref{eq:g-k-2} implies all the equations involving $\delta g_{k-2}$ in \eqref{eq:ker-d-i}. Notice that $\delta g_{k-3} = \dots = \delta g_0=0$, thus the equations involving $\delta g_{k-2}$ reduce to
\begin{equation}
    Td_{i}(\delta g_{k-2}) = 0, \quad 0\le i \le k-2. 
\end{equation}
Notice that $d_i$'s are group morphisms, thus
\[
\begin{split}
  Td_i \Big( d_0(g_{k-1})^{-1} \cdot ( Td_0 (\delta g_{k-1}) ) \cdot g_{k-2} \Big)  = d_i(d_0(g_{k-1})^{-1}) \cdot Td_i (Td_0 (\delta g_{k-1}) )  \cdot d_i(g_{k-2})=0 . 
\end{split}
\] The last step uses the fact that $Td_i (Td_0 (\delta g_{k-1}) ) = Td_0 Td_{i+1} (\delta g_{k-1}) =0$ because $\delta g_{k-1} \in T\ker d_j$ for $j\ge 1$. Thus, $\ker T\bar{p}^k_0 = \mathfrak{n}_{k-1}$, which is simply a vector space. Therefore the tangent object of $\bar{W}G $ is \[\mathfrak n_0[1] \oplus \mathfrak n_1[2] \oplus \dots \oplus \mathfrak n_k [k+1] \oplus \dots .\]
Thus, with Theorem \ref{thm:splitgrad},  in the case of $\bar{W}G_\bullet$,  we reproduce the result calculated in \cite{Jurco}. 
\end{example}

For more examples, we also refer to \cite[Section8.4]{li:thesis} and \cite[Section13]{severa:diff}.

\bibliographystyle{alpha}
\bibliography{bibliography}

\end{document}